\title{Pseudo-abelian varieties}
\author{Burt Totaro}
\date{  }
\def\Z{\text{\bf Z}}
\def\P{\text{\bf P}}
\def\A{\text{\bf A}}
\def\F{\text{\bf F}}
\def\arrow{\rightarrow}
\def\imp{\Rightarrow}
\def\surj{\twoheadrightarrow}
\def\fppf{\text{fppf}}
\def\Spec{\text{Spec}}
\def\Ext{\text{Ext}}
\def\Hom{\text{Hom}}
\def\Pic{\text{Pic}}
\def\Br{\text{Br}}
\def\perf{\text{perf}}
\def\deg{\text{deg}}
\def\Ga{\mathbf{G}_a}
\def\Gm{\mathbf{G}_m}
\def\Gal{\text{Gal}}
\def\tors{\text{tors}}
\def\m{\mathfrak{m}}
\def\Proj{\text{Proj}\, }
\begin{document}
\maketitle

\newtheorem{theorem}{Theorem}[section]
\newtheorem{proposition}[theorem]{Proposition}
\newtheorem{corollary}[theorem]{Corollary}
\newtheorem{lemma}[theorem]{Lemma}

\theoremstyle{definition}
\newtheorem{definition}[theorem]{Definition}
\newtheorem{example}[theorem]{Example}
\newtheorem{question}[theorem]{Question}

\theoremstyle{remark}
\newtheorem{remark}[theorem]{Remark}

The theory of algebraic groups is divided into two parts
with very different flavors:
affine algebraic groups (which can be viewed as matrix groups) and abelian
varieties. Concentrating on these two types of groups makes sense
in view of Chevalley's theorem: for a perfect field $k$,
every smooth connected $k$-group $G$
is an extension of an abelian variety $A$ by a smooth connected
affine $k$-group $N$
\cite{Chevalley, Conradchev}:
$$1\arrow N\arrow G\arrow A\arrow 1.$$
But Chevalley's theorem
fails over every imperfect field. What can be said about the structure
of a smooth connected algebraic group
over an arbitrary field $k$? (Group schemes which are
neither affine nor proper come up naturally,
for example as the automorphism group scheme or the Picard scheme
of a projective variety over $k$. Groups over an imperfect
field such as the rational function field $\F_p(t)$ arise
geometrically as the generic fiber of a family of groups
in characteristic $p$.)

One substitute for Chevalley's theorem that works over an arbitrary field
is that every connected group scheme (always assumed to be of finite
type) over a field $k$ is an extension of an abelian variety
by a connected affine group scheme, not uniquely \cite[Lemme IX.2.7]{Raynaud}.
But when this result is applied to a smooth $k$-group,
the affine subgroup scheme may have to be non-smooth.
And it is desirable to understand the
structure of smooth $k$-groups as far as possible without bringing
in the complexities of arbitrary $k$-group schemes. To see how
far group schemes can be from being smooth, note that
every group scheme $G$ of finite type
over a field $k$ has a unique maximal smooth
closed $k$-subgroup \cite[Lemma C.4.1]{CGP},
but (for $k$ imperfect) that subgroup
can be trivial even when $G$ has positive dimension.
(A simple example is the group scheme $G=\{(x,y)\in (\Ga)^2: x^p=ty^p \}$
for $t\in k$ not a $p$th power, where $p$ is the characteristic
of $k$. The dimension of $G$ is 1, but the maximal
smooth $k$-subgroup of $G$ is the trivial group.)

Brion gave
a useful structure theorem
for smooth $k$-groups
by putting the smooth affine group ``on top''.
Namely,
for any field $k$ of positive characteristic,
every
smooth connected
$k$-group is a central extension of a smooth connected affine $k$-group
by a semi-abelian variety (an extension of an abelian variety
by a torus) \cite[Proposition 2.2]{Brion}.
(Another proof was given
by C.~Sancho de Salas and F.~Sancho de Salas \cite{SS}.)
One can still ask what substitute
for Chevalley's theorem works over arbitrary fields, with the
smooth affine group ``on the bottom''. We can gain inspiration from Tits's
theory of pseudo-reductive groups \cite{Tits91, Tits92},
developed by Conrad-Gabber-Prasad
\cite{CGP}. By definition, a pseudo-reductive
group over a field $k$ is a smooth connected affine $k$-group $G$ 
such that every smooth connected unipotent normal
$k$-subgroup of $G$ is trivial. That suggests the definition:

\begin{definition}
A {\it pseudo-abelian variety }over a field $k$
is a smooth connected $k$-group $G$
such that every smooth connected affine normal $k$-subgroup of $G$
is trivial.
\end{definition}

It is immediate that every smooth connected group 
over a field $k$ is an extension of
a pseudo-abelian variety by a smooth connected
affine group over $k$, in a unique way. Whether this is useful
depends on what can be said about the structure of pseudo-abelian
varieties. Chevalley's theorem implies that a pseudo-abelian
variety over a perfect field is simply an abelian variety.

Over any imperfect field, Raynaud constructed pseudo-abelian varieties
which are not abelian varieties \cite[Exp.~XVII, App.~III, Prop.~5.1]{SGA3}.
Namely, for any finite purely inseparable
extension $l/k$ and any abelian variety $B$ over $l$, the Weil
restriction $R_{l/k}B$ is a pseudo-abelian variety, and it
is not an abelian variety if $l\neq k$ and $B\neq 0$. (Weil restriction
produces a $k$-scheme $R_{l/k}B$ whose set of $k$-rational points is
equal to the set of $l$-rational points of $B$.)
Indeed, over an algebraic
closure $\overline{k}$ of $l$,
$R_{l/k}B$ becomes an extension of $B_{\overline{k}}$
by a smooth unipotent group of dimension $([l\colon k]-1)\dim(B)$, and so
$R_{l/k}B$ is not an abelian variety. (This example shows that
the notion of a pseudo-abelian variety is not geometric, in the
sense that it is not preserved by arbitrary field extensions.
It is preserved by separable field extensions, however.)

One main result of this paper is that every pseudo-abelian variety
over a field $k$ is {\it commutative}, and every
pseudo-abelian variety is an extension of a smooth connected
commutative unipotent $k$-group by an abelian variety (Theorem
\ref{pseudo}). In this sense,
pseudo-abelian varieties are reasonably close to abelian varieties.
So it is a meaningful generalization of Chevalley's theorem
to say that every smooth connected group over a field $k$ is an extension
of a pseudo-abelian variety by a smooth connected affine group
over $k$.

One can expect many properties of abelian varieties to extend
to pseudo-abelian varieties. For example, the Mordell-Weil
theorem holds for pseudo-abelian varieties (Proposition \ref{mordell}).
Like abelian varieties, pseudo-abelian varieties
can be characterized among all smooth connected
groups $G$ over a field $k$ without using the group structure, in fact
using only the birational equivalence class of $G$ over $k$:
$G$ is a pseudo-abelian variety if and only if $G$ is not ``smoothly
uniruled'' (Theorem \ref{bir}).

The other main result is that,
over an imperfect field of characteristic $p$,
{\it every }smooth connected commutative
group of exponent $p$
occurs as the unipotent quotient of some pseudo-abelian
variety (Corollaries \ref{supersingular} and \ref{ordinary}).
Over an imperfect field,
smooth commutative unipotent groups form a rich family,
studied by Serre, Tits, Oesterl\'e, and others
over the past 50 years \cite{KMT}, \cite{Oesterle}, \cite[Appendix B]{CGP}.
So there are far more pseudo-abelian varieties (over any imperfect field)
than the initial examples, Weil restrictions of abelian
varieties.

Lemma \ref{equiv} gives a precise relation between
the structure of certain
pseudo-abelian varieties and the (largely unknown) structure
of commutative pseudo-reductive groups.
We prove some new results about commutative
pseudo-reductive groups.
First, a smooth connected unipotent group of dimension 1 over a
field $k$ occurs as the unipotent quotient of some 
commutative pseudo-reductive group if and only if it is not
isomorphic to the additive group $\Ga$ over $k$ (Corollary \ref{cordim1}).
But an analogous statement fails in dimension 2 (Example \ref{dim2}).
The proofs include some tools for computing the invariants $\Ext^1(U,\Gm)$
and $\Pic(U)$ of a unipotent group $U$. Finally, Question \ref{ext2}
conjectures a calculation
of $\Ext^2(\Ga,\Gm)$ over any field 
by generators and relations, in the spirit
of the Milnor conjecture. Question \ref{dimone} attempts
to describe the commutative pseudo-reductive groups
over 1-dimensional fields.

Thanks to Lawrence Breen, Michel Brion, Brian Conrad, and Tony Scholl
for useful discussions. The proofs of Theorem \ref{pseudo}
and Lemma \ref{alphap} were simplified by Brion and Conrad,
respectively. Other improvements are due to the excellent referees,
including Example \ref{UU}, which answers a question in an earlier
version of the paper.

\section{Notation}
\label{notation}

A {\it variety }over a field $k$ means an integral separated
scheme of finite type over $k$.
Let $k$ be a field with algebraic closure $\overline{k}$
and separable closure $k_s$.
A field extension $F$ of $k$ (not necessarily algebraic) is {\it separable }if
the ring $F\otimes_k\overline{k}$ contains no nilpotent elements
other than zero. For example, the function field of a variety $X$
over $k$ is separable over $k$ if and only if the smooth locus
of $X$ over $k$ is nonempty \cite[section X.7, Theorem 1, Remark 2,
Corollary 2]{Bourbaki}. 

We use the convention that
a connected topological space is nonempty.

A group scheme over a field $k$ is {\it unipotent }if it is isomorphic
to a $k$-subgroup scheme of the group of strictly upper triangular
matrices in $GL(n)$ for some $n$ 
(see \cite[Th\'eor\`eme XVII.3.5]{SGA3} for several equivalent
conditions). Being unipotent is a geometric property,
meaning that it does not change under field extensions of $k$. Unipotence
passes to subgroup schemes, quotient groups, and group extensions.

We write $\Ga$ for the additive group.
Over a field $k$ of characteristic $p>0$, we write
$\alpha_p$ for the $k$-group scheme $\{ x\in \Ga: x^p=0\}$.
A group scheme over $k$ is unipotent if and only if it has a composition
series with successive quotients isomorphic to $\alpha_p$, $\Ga$,
or $k$-forms of $(\Z/p)^r$ \cite[Th\'eor\`eme XVII.3.5]{SGA3}.

Tits defined a smooth connected unipotent group over a field $k$ to
be {\it $k$-wound }if it does not contain $\Ga$ as a subgroup
over $k$. When $k$ has
characteristic $p>0$, a smooth connected
commutative $k$-group of exponent $p$
can be described in a unique way as an extension of a $k$-wound
group by a subgroup isomorphic to $(\Ga)^n$ for some $n\geq 0$
\cite[Theorem B.3.4]{CGP}.
Over a perfect field,
a $k$-wound group is trivial. An example of a nontrivial $k$-wound group
is the smooth connected subgroup $\{(x,y): y^p=x-tx^p\}$ of $(\Ga)^2$ for any
$t\in k-k^p$, discussed in Example \ref{dim1ex}.

Over an imperfect field $k$ of characteristic $p$, there are many
smooth connected commutative groups of
exponent $p$ (although
they all become isomorphic to $(\Ga)^n$ over the algebraic closure
of $k$). One striking phenomenon is that some of these groups
are $k$-rational varieties, while others contain no $k$-rational curves
\cite[Theorem 6.9.2]{KMT}, \cite[Theorem VI.3.1]{Oesterle}.
Explicitly, define a {\it $p$-polynomial }
to be a polynomial with coefficients in $k$
such that every monomial in $f$ is a single 
variable raised to some power of $p$. Then
every smooth connected commutative $k$-group
of exponent $p$ and dimension $n$
is isomorphic to the subgroup of $(\Ga)^{n+1}$ defined by some
$p$-polynomial $f$ with nonzero degree-1 part
\cite[Proposition V.4.1]{Oesterle}, \cite[Proposition B.1.13]{CGP}.

A smooth connected affine group $G$ 
over a field $k$ is {\it pseudo-reductive }if
every smooth connected unipotent normal $k$-subgroup of $G$ is trivial.
The stronger property that $G$ is {\it reductive }means that every smooth
connected unipotent normal subgroup of $G_{\overline{k}}$ is trivial.

We write $\Gm$ for the multiplicative group over $k$.
For each positive integer
$n$, the $k$-group scheme $\{x\in \Gm: x^n=1\}$ of $n$th roots of unity
is called $\mu_n$. A $k$-group scheme $M$ is of {\it multiplicative type }if
it is the dual of some $\Gal(k_s/k)$-module $L$ which is finitely
generated as an abelian group, meaning that $M=\Spec (k_s[L])^{\Gal(k_s/k)}$
\cite[Proposition X.1.4]{SGA3}. Dualizing the surjection $L\arrow L/L_{\tors}$
shows that every $k$-group scheme $M$
of multiplicative type contains a $k$-torus $T$ 
with $M/T$ finite. (Explicitly, $T$ is the identity
component of $M$ with reduced scheme structure.)

\section{Structure of pseudo-abelian varieties}

\begin{theorem}
\label{pseudo}
Every pseudo-abelian variety $E$ over a field $k$ is commutative.
Moreover, $E$ is in a unique way
an extension of a smooth connected commutative unipotent $k$-group
$U$ by an abelian variety $A$:
$$1\arrow A\arrow E\arrow U\arrow 1$$
Finally, $E$ can be written (not uniquely)
as $(A\times H)/K$
for some commutative affine $k$-group scheme $H$
and some commutative finite $k$-group scheme $K$ which injects
into both $A$ and $H$, with $H/K\cong U$.
\end{theorem}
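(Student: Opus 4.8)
The plan is to realize the extension class of $E$ by an extension of $U$ by a suitable finite subgroup scheme of $A$, and then recover $E$ as a pushout. By the first two parts of the theorem, $E$ is commutative and fits in $1\arrow A\arrow E\arrow U\arrow 1$, so this extension is classified by a class $e\in\Ext^1_k(U,A)$, the Ext group in the category of commutative fppf sheaves (equivalently, commutative group schemes) over $k$; the fppf topology is essential here since multiplication by $p$ on $A$ is not \'etale. If $U=0$ we take $H=K=0$ and are done, so assume $k$ has characteristic $p>0$ and $U$ is a nontrivial smooth connected commutative unipotent group. A composition series with successive quotients $\Ga$ shows that $[p^n]_U=0$ for some $n\geq 1$. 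Since $\Ext^1$ is biadditive, the endomorphism of $\Ext^1_k(U,A)$ induced by $[p^n]_U$ equals multiplication by $p^n$, and since $[p^n]_U=0$ this forces $p^n e=0$.

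Next I would exploit the isogeny $[p^n]\colon A\arrow A$, whose kernel $K:=A[p^n]$ is a finite commutative $k$-group scheme. This gives a short exact sequence $0\arrow K\arrow A\xrightarrow{p^n}A\arrow 0$ of fppf sheaves, and the associated long exact sequence for $\Ext^\bullet_k(U,-)$ contains
$$\Ext^1_k(U,K)\xrightarrow{\iota_*}\Ext^1_k(U,A)\xrightarrow{p^n}\Ext^1_k(U,A),$$
where $\iota\colon K\inj A$ is the inclusion. Because $p^n e=0$, the class $e$ lies in $\ker(p^n)=\operatorname{im}(\iota_*)$, so it lifts to some $f\in\Ext^1_k(U,K)$ with $\iota_* f=e$. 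Realizing $f$ as an extension of group schemes produces
$$1\arrow K\arrow H\arrow U\arrow 1,$$
and since $\iota_*$ is precisely the pushout operation along $\iota$, the extension $E$ is recovered as $E\cong(A\times H)/K$, where $K$ is embedded antidiagonally (via $\iota$ into $A$ and via the kernel inclusion into $H$).

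It then remains to verify the stated properties. The group scheme $K=A[p^n]$ is finite and commutative and injects into $A$ by construction; it injects into $H$ as the kernel of $H\arrow U$, and $H/K\cong U$. The middle term $H$ is an extension of the affine group $U$ by the finite, hence affine, group $K$, so $H$ is affine. Moreover $H$ is commutative: the composite $H\inj A\times H\surj E$ has trivial kernel because $\iota$ is injective, exhibiting $H$ as a subgroup scheme of the commutative group $E$.

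The main obstacle I anticipate is the homological input in the second step: verifying that $\iota_*$ is literally the pushout map, and in particular that the class $f$ is represented by a genuine (representable, affine, commutative) group-scheme extension $H$ rather than merely an abstract fppf-sheaf extension. Once representability and affineness of $H$ are in hand, the conclusion is nearly formal, with the affineness of $H$ coming for free from the unipotence of $U$ and finiteness of $K$. The step of checking $[p^n]e=0$ is conceptually the key idea but technically routine.
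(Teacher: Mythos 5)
Your proposal does not prove the theorem: it establishes only the final assertion, and it does so by assuming the first two (``By the first two parts of the theorem, $E$ is commutative and fits in $1\arrow A\arrow E\arrow U\arrow 1$''). Those first two parts are the substance of the result and require real input: commutativity is proved by applying Chevalley's theorem to $[E,E]_{\overline{k}}$ to see that the smooth connected normal $k$-subgroup $[E,E]$ is affine, hence trivial by pseudo-abelianness; the existence of the extension $1\arrow A\arrow E\arrow U\arrow 1$ comes from Brion's structure theorem (every smooth connected group in characteristic $p$ is a central extension of a smooth connected affine group by a semi-abelian variety), with pseudo-abelianness forcing the torus part of the semi-abelian variety to vanish; uniqueness comes from identifying $U=\Spec\, O(E)$; and the unipotence of $U$ is itself a nontrivial step. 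This last point creates a circularity that your architecture cannot avoid: your pushout argument consumes the unipotence of $U$ (you need $U$ killed by $p^n$ to conclude $p^n e=0$), whereas in the paper the unipotence of $U$ is deduced \emph{from} the decomposition $E\cong (A\times H)/K$ --- obtained there with no torsion hypothesis on $U$ --- by showing that the multiplicative-type part of the commutative affine group scheme $H$ is finite. So your proof of the third part cannot simply be slotted in; one would first need an independent argument that $U$ contains no nontrivial torus.

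Within its scope, your argument is correct, and it is in essence the paper's own proof of Lemma \ref{highly}(2): since $U$ has exponent $p^r$, the group $\Ext^1(U,A)$ is killed by $p^r$; the exact sequence $\Ext^1(U,A[p^r])\arrow \Ext^1(U,A)\stackrel{p^r}{\longrightarrow}\Ext^1(U,A)$, taken in the abelian category of commutative $k$-group schemes of finite type (which also disposes of your representability worry), lifts the class of $E$ to an extension $1\arrow A[p^r]\arrow H\arrow U\arrow 1$, and $E$ is the pushout $(A\times H)/A[p^r]$. By contrast, the paper's proof of Theorem \ref{pseudo} obtains the decomposition from Raynaud's theorem: write $E$ as an extension of an abelian variety $B$ by a connected affine $k$-group scheme $H$, set $K=H\cap A$ (affine and proper, hence finite), and show that $H/K$ maps isomorphically to $U$; this route needs no torsion hypothesis, which is exactly why it can then be used to prove that $U$ is unipotent. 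Two smaller inaccuracies: a smooth connected commutative unipotent group over an imperfect field need not have a composition series with successive quotients $\Ga$ (wound groups do not), though it does have a filtration with successive quotients killed by $p$, which suffices for your purpose; and commutativity and affineness of $H$ are automatic from the construction in that abelian category, so your closing verifications can be shortened.
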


\begin{proof}
Since $E$ is a smooth connected $k$-group, the commutator
subgroup $[E,E]$ is a smooth connected normal $k$-subgroup of $E$
\cite[Proposition VIB.7.1]{SGA3}. Since abelian varieties
are commutative, Chevalley's theorem
applied to $E_{\overline{k}}$
gives that $[E,E]_{\overline{k}}$ is affine
\cite{Chevalley, Conradchev}.
Therefore the $k$-subgroup $[E,E]$ is affine.
Since $E$ is a pseudo-abelian variety over $k$, it follows
that $[E,E]$ is trivial. That is, $E$ is commutative.

If the field $k$
is perfect, then the pseudo-abelian variety
$E$ is an abelian variety by Chevalley's theorem.
So we can assume that $k$ is imperfect; in particular,
$k$ has characteristic $p>0$.
By Brion's theorem,
$E$ is an extension
$$1\arrow A\arrow E\arrow U\arrow 1$$
with $A$ a semi-abelian variety and $U$ a smooth connected
affine $k$-group \cite[Proposition 2.2]{Brion}.
The maximal $k$-torus in $A$ is trivial
because $E$ is a pseudo-abelian variety.
That is, $A$ is an abelian variety.
So the morphism $E\arrow U$ is proper and flat, with geometrically
reduced and connected fibers. It follows that
the pullback map $O(U)\arrow O(E)$ on rings of regular
functions is an isomorphism
\cite[Proposition 7.8.6]{EGAIII2}. Since $U$ is affine,
it follows that $U=\Spec \, O(E)$ and hence
the exact sequence is uniquely
determined by $E$.
(The idea of considering $\Spec \, O(E)$ goes back to Rosenlicht
\cite[p.~432]{Rosenlichtbasic}.)

Like any connected group scheme of finite type over $k$,
$E$ can also be written (not uniquely) as an extension
$$1\arrow H\arrow E\arrow B\arrow 1$$
with $H$ a connected affine group scheme over $k$ and $B$ an abelian variety
\cite[Lemme IX.2.7]{Raynaud}.
Let $K$ be the intersection of $H$ and $A$ in $E$.
Then $K$ is both affine and proper over $k$,
and so $K$ has dimension 0.
Also, $H/K$ injects into $U$, and the abelian variety $B$
maps onto the quotient group $U/(H/K)$. Since $U/(H/K)$ is both
affine (being a quotient group of $U$) and an abelian variety,
it is trivial. That is, $H/K$ maps isomorphically to $U$.
Since $E$ is commutative, this means that
$E$ is isomorphic to $(A\times H)/K$.

It remains to show that $U$ is unipotent. 
Since $H$ is a commutative affine $k$-group scheme, it is an extension
of a unipotent $k$-group scheme by a $k$-group scheme $M$ of multiplicative
type \cite[Th\'eor\`eme XVII.7.2.1]{SGA3}.
Because $M\subset H \subset E$ where $E$
is a pseudo-abelian variety, every $k$-torus in $M$ is trivial.
By section \ref{notation}, it follows
that $M$ is finite. Thus $H$ is an extension of a unipotent $k$-group
scheme by a finite $k$-group scheme. So the quotient group $U$ of $H$
is also an extension of a unipotent $k$-group scheme by a finite
$k$-group scheme; in particular, every $k$-torus
in $U$ is trivial. Since $U$ is a smooth
connected affine $k$-group, it follows that $U$ is unipotent
\cite[Proposition XVII.4.1.1]{SGA3}.
\end{proof}

\begin{question}
\label{brionquestion}
(Suggested by Michel Brion.) How can
Raynaud's examples of pseudo-abelian
varieties, purely inseparable Weil restrictions of abelian varieties,
be described explicitly as extensions $1\arrow A\arrow E\arrow U\arrow 1$
or as quotients $(A\times H)/K$, in the terminology of
Theorem \ref{pseudo}?
\end{question}

For a finite purely inseparable
extension $l/k$ and an abelian variety $B$ over $l$, the maximal
abelian subvariety of the Weil restriction $R_{l/k}B$
is the Chow $l/k$-trace
of $B$ \cite{Conradtrace}. Question \ref{brionquestion} asks for
a description of the unipotent quotient of $R_{l/k}B$, too.

\begin{lemma}
Let $G$ be a smooth connected group over a field $k$,
and let $K$ be a separable extension field of $k$.
Then $G$ is a pseudo-abelian variety over $k$
if and only if it becomes a pseudo-abelian variety
over $K$.
\end{lemma}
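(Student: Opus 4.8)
The plan is to show that the formation of the maximal smooth connected affine normal subgroup commutes with separable base change; the lemma follows at once. Write $N(G/k)$ for the unique maximal smooth connected affine normal $k$-subgroup of $G$ (it exists by the remark following the definition of pseudo-abelian variety, being the kernel of the map to the pseudo-abelianization), so that $G$ is a pseudo-abelian variety over $k$ exactly when $N(G/k)=1$. One inclusion is formal and uses nothing about separability: for any field extension $K/k$ the base change $N(G/k)_K$ is again smooth, connected (a connected group scheme has the identity as a rational point, hence is geometrically connected), affine, and normal in $G_K$, so $N(G/k)_K\subseteq N(G_K/K)$. In particular, if $G_K$ is a pseudo-abelian variety then $N(G/k)_K=1$, hence $N(G/k)=1$ and $G$ is a pseudo-abelian variety over $k$. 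This settles the implication valid over an arbitrary extension; the content of the lemma is the reverse inclusion $N(G_K/K)\subseteq N(G/k)_K$ for separable $K/k$, equivalently that $G$ pseudo-abelian over $k$ forces $G_K$ pseudo-abelian over $K$.

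First I would dispose of the separable algebraic case by Galois descent. The subgroup $N(G_{k_s}/k_s)$ is canonically attached to $G_{k_s}$, hence stable under the action of $\Gal(k_s/k)$, and therefore descends to a smooth connected affine normal $k$-subgroup of $G$. That subgroup is contained in $N(G/k)$ by maximality, giving $N(G_{k_s}/k_s)\subseteq N(G/k)_{k_s}$, and combined with the formal inclusion above this yields $N(G/k)_{k_s}=N(G_{k_s}/k_s)$. Thus $G$ is pseudo-abelian over $k$ iff it is so over $k_s$, and combining this with the formal direction for the extension $k_s/K$ (embedding any separable algebraic $K$ into $k_s$) handles every separable algebraic $K/k$. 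This argument works over an arbitrary base field, a fact I will reuse below.

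For a general separable $K/k$ I would reduce to the one-variable rational function field and argue by specialization. A nontrivial subgroup witnessing that $G_K$ is not pseudo-abelian is of finite type, hence is defined over a finitely generated subextension of $K/k$ by a standard limit (spreading-out) argument, so I may assume $K$ is finitely generated over $k$; being separable, $K$ is then separably generated, i.e.\ finite separable over some $k(t_1,\dots,t_n)$. Using the separable algebraic case and induction on $n$, the problem reduces to proving that $G$ pseudo-abelian over a field $F$ implies $G_{F(t)}$ pseudo-abelian over $F(t)$. Here I would spread out a hypothetical nontrivial smooth connected affine normal $F(t)$-subgroup of $G_{F(t)}$ to a closed subgroup scheme of $G_V$ over a dense open $V\subseteq\A^1_F$, shrinking $V$ so that the fibers stay smooth, connected, affine, normal, and of the same positive dimension (using generic flatness, constructibility of these conditions, and the fact that affineness of the generic fiber spreads to an open neighborhood). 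Specializing at a closed point $v\in V$ whose residue field $\kappa(v)$ is separable over $F$ — such points are dense, since the elements of $F_s$ give closed points of $\A^1_F$ with residue field separable over $F$ — produces a nontrivial $N(G_{\kappa(v)}/\kappa(v))$, contradicting the already established separable algebraic case over $F$.

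The main obstacle is the transcendental case of the third paragraph: making the spreading-out precise so that all four properties (smooth, connected, affine, normal) and nontriviality persist in the family, and guaranteeing a supply of specialization points with residue field separable over the base. This is precisely where separability is essential — it is what makes $K$ separably generated and what supplies the separable specialization points — consistent with the observation, via Weil restrictions, that the pseudo-abelian condition fails to be preserved under inseparable extensions. The Galois-descent step is routine once one notes that $N(\cdot)$ is canonical, and the whole argument can be streamlined by invoking Theorem \ref{pseudo} to assume $G$ commutative, so that ``normal'' is automatic and every offending subgroup may be taken unipotent.
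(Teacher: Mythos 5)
Your proof is correct, and it rests on the same two mechanisms as the paper's: Galois descent of the canonical maximal smooth connected affine normal subgroup (which exists by the uniqueness remark you cite) for the algebraic part, and spreading out a witness subgroup followed by specialization for the transcendental part. But your reduction scheme is genuinely different. The paper splits the hard direction into two cases: (1) $K=k_s$, handled by Galois descent exactly as you do, and (2) $k$ separably closed, where it takes $K$ finitely generated over $k$, realizes $K$ as the function field of a smooth $k$-variety $X$ (this is where separability enters), spreads the witness subgroup out over $X$, and specializes at a $k$-point of $X$ --- such a point existing precisely because $k$ is separably closed and $X$ is smooth. You never pass to a separably closed base: instead you choose a separating transcendence basis, induct on transcendence degree through one-variable steps $F\subset F(t)$, spread out over a dense open $V\subseteq \A^1_F$, and specialize at closed points of $V$ with residue field separable over $F$, invoking the separable algebraic case at each such point. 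The paper's architecture buys brevity --- one spreading-out, one specialization, and case (2) is logically independent of case (1) --- while yours buys uniformity: every step works over an arbitrary base field, and on $\A^1_F$ the supply of good specialization points is transparent (a dense open omits only finitely many closed points, and separable closed points are infinite in number since $F$ infinite gives $F$-points and $F$ finite is perfect); the price is the induction and the use of the algebraic case inside the transcendental one. One aside in your last sentence is off: after reducing to commutative $G$, an offending smooth connected affine subgroup need not be (or contain) a nontrivial unipotent group --- it could be a torus --- but nothing in your argument depends on that remark, since affineness of the witness is all that is ever used.
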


\begin{proof}
If $G$ becomes a pseudo-abelian variety over $K$, it is clearly
a pseudo-abelian variety over $k$. For the converse, 
by considering the separable closure of $K$, it suffices to treat
the cases where (1) $K$ is the separable closure of $k$
or (2) $k$ is separably closed. To prove (1): 
there is a unique maximal smooth connected affine normal $k_s$-subgroup
of $G_{k_s}$. By uniqueness, it is $\Gal(k_s/k)$-invariant, and therefore
comes from a subgroup $H$ over $k$. Clearly $H$ is 
a smooth connected affine normal $k$-subgroup of $G$.
To prove (2), reduce to the case where
$K$ is finitely generated over $k$,
so that $K$ is the fraction field of a smooth $k$-variety $X$, shrink
$X$ so that the maximal smooth connected affine normal $K$-subgroup
of $G_K$ comes from a subgroup scheme of $G_X$, and specialize
to a $k$-point of $X$ (which exists because $k$ is separably closed).
This is essentially the same as the proof
that pseudo-reductivity
remains unchanged under separable extensions \cite[Proposition 1.1.9(1)]{CGP}.
\end{proof}

\section{Example}

Pseudo-abelian varieties
occur in nature, in the following sense.

\begin{example}
For every odd prime $p$, there is a regular projective curve $X$
over a field $k$ of characteristic $p$
such that the Jacobian $\Pic^0_{X/k}$ is a pseudo-abelian
variety which is not an abelian variety.
\end{example}

We leave it to the reader to seek a curve with these
properties in characteristic 2. (The simpler the example, the better.)

\begin{proof}
Let $k$ be the rational function field $\F_p(t)$.
Let $X$ be the regular compactification of the regular affine
curve $y^2=x(x-1)(x^p-t)$ over $k$. Rosenlicht considered this
curve for a closely related purpose \cite[pp.~49--50]{Rosenlichtsome}.
(To find the non-regular locus
of the given affine curve, compute the zero locus
of all derivatives of the equation with respect to $x,y$
and also $t$: this gives that $(2x-1)(x^p-t)=0$, $2y=0$,
and $x(x-1)=0$, which defines the empty set
in $\A^2_{k}=\A^2_{\F_p(t)}$.)
Then $X$ is a geometrically integral projective curve of arithmetic
genus $(p+1)/2$, and so $G:=\Pic^0_{X/k}=\ker(\deg\colon \Pic_{X/k}\arrow \Z)$
is a smooth connected commutative
$k$-group of dimension $(p+1)/2$
\cite[Theorem 8.2.3 and Proposition 8.4.2]{BLR}.
Over an algebraic closure
$\overline{k}$, the curve $X_{\overline{k}}$ is not regular:
it has a cusp (of the form $z^2=w^p$)
at the point $(x,y)=(u,0)$, where we define $u=t^{1/p}$ in $\overline{k}$.
The normalization $C$ of $X_{\overline{k}}$ is the regular
compactification of the regular affine curve $y^2=x(x-1)(x-u)$ over
$\overline{k}$, with normalization map $C\arrow X_{\overline{k}}$
given by $(x,y)\mapsto (x,y(x-u)^{(p-1)/2})$. Since $C$ has genus 1,
$\Pic^0_{C/\overline{k}}$ is an elliptic curve over $\overline{k}$.
Pulling back by $C\arrow X_{\overline{k}}$ gives a homomorphism
from $G_{\overline{k}}$ onto $\Pic^0_{C/\overline{k}}$,
$$1\arrow N\arrow G_{\overline{k}}\arrow \Pic^0_{C/\overline{k}}\arrow 1,$$
with kernel $N$ isomorphic to $(\Ga)^{(p-1)/2}$ over
$\overline{k}$ \cite[section V.17]{SerreAlgebraic},
\cite[Proposition 9.2.9]{BLR}. It follows that $G$ is not an
abelian variety over $k$.

To show that $G$ is a pseudo-abelian variety over $k$,
we have to show that every smooth connected affine $k$-subgroup $S$
of $G$ is trivial. For such a subgroup, $S_{\overline{k}}$ must map
trivially into the elliptic curve $\Pic^0_{C/\overline{k}}$. So it suffices
to show that every smooth connected $k$-subgroup $S$ of $G$ with
$S_{\overline{k}}$ contained in $N$ is trivial. It will be 
enough to prove the corresponding statement at the level of Lie algebras.
Namely, we have an exact sequence of $\overline{k}$-vector spaces
$$0\arrow N\arrow H^1(X,O)\otimes_k \overline{k}\arrow H^1(C,O)\arrow 0,$$
and it suffices to show that the codimension-1 $\overline{k}$-linear
subspace $N$
has zero intersection with the $k$-vector space $H^1(X,O)$.

The dual of the surjection $H^1(X,O)\otimes_k\overline{k}\arrow
H^1(C,O)$ is the inclusion 
$H^0(C,K_C)\arrow H^0(X,K_X)\otimes_k \overline{k}$ given
by the trace map associated to the finite birational
morphism $C\arrow X_{\overline{k}}$.
Here
$K_X$ denotes the canonical line bundle of the Gorenstein curve $X$.
It is a standard calculation for hyperelliptic curves that
$H^0(C,K_C)$ has a $\overline{k}$-basis given by $dx/y$ and $H^0(X,K_X)$
has a $k$-basis given by $x^i dx/y$ for $0\leq i\leq (p-1)/2$
\cite[section 2]{Stohr}.
By the formula for the normalization map $C\arrow X_{\overline{k}}$,
this map sends $dx/y$ to $(x-u)^{(p-1)/2}dx/y$.

To show that $N\subset H^1(X,O)\otimes_k \overline{k}$
has zero intersection with the $k$-linear space $H^1(X,O)$,
it is equivalent to show that the coefficients 
$a_0,\ldots,a_{(p-1)/2}\in \overline{k}$ of $(x-u)^{(p-1)/2}dx/y$
in terms of our $k$-basis for $H^0(X,K_X)$ are $k$-linearly independent.
These coefficients are $\binom{(p-1)/2}{i}(-u)^{(p-1)/2-i}$ for
$0\leq i\leq (p-1)/2$. Since nonzero factors in $k$ do not matter,
it suffices to show that $1,u,u^2,\ldots,u^{(p-1)/2}\in \overline{k}$
are $k$-linearly independent. Since $t\in k$
is not a $p$th power, $u=t^{1/p}$ has degree $p$ over $k$,
and so even $1,u,u^2,\ldots,u^{p-1}$ are $k$-linearly independent.
This completes the proof that $G=\Pic^0_{X/k}$ is a pseudo-abelian
variety over $k$.
\end{proof}

We remark that for any odd prime $p$,
the genus $(p+1)/2$ in this example is the smallest possible
for a geometrically integral projective curve $X$ 
over a field $k$ of characteristic $p$
whose Jacobian $G$ is a pseudo-abelian variety over $k$
but not an abelian variety. Indeed, such a curve $X$ must be regular;
otherwise the kernel $K$ of the homomorphism from $G$ to the Jacobian
of the normalization of $X$ would be a nontrivial smooth connected
affine $k$-subgroup of $G$. (It suffices to check that $K_{\overline{k}}$
is a nontrivial smooth connected affine group over $\overline{k}$. 
To do that, let $f:D\arrow X$ be the normalization; this is not
an isomorphism if $X$ is not regular.
Then $f:D_{\overline{k}}\arrow X_{\overline{k}}$ is a birational
morphism of (possibly singular) integral projective curves. 
The kernel $K_{\overline{k}}$ of the surjection
$G_{\overline{k}}=\Pic^0_{X/\overline{k}}\arrow \Pic^0_{D/\overline{k}}$
\def\Gm{\mathbf{G}_m}
has $K(\overline{k})=H^0(X_{\overline{k}},(R_{D/X}\mathbf{G}_{m,D})/
\mathbf{G}_{m,X})$, which is
nontrivial if $f$ is not an isomorphism. More precisely, $K_{\overline{k}}$
is a quotient of the product of the groups $(O_{D,y}/\m^N)^*$ viewed
as $\overline{k}$-groups for a nonempty finite set of
points $y\in D(\overline{k})$ and some positive integers $N$,
and so $K_{\overline{k}}$ is smooth, connected,
and affine over $\overline{k}$.)

Next, $X$ is not smooth over $k$; otherwise its Jacobian
$G$ would be an abelian variety. So the geometric genus of $X_{\overline{k}}$
(the genus of the normalization of $X_{\overline{k}}$)
is less than the arithmetic genus of $X_{\overline{k}}$
(or equivalently
of $X$), by considering the exact sequence of sheaves
$0\arrow O_{X_{\overline{k}}}\arrow g_* O_{C}\arrow L\arrow 0$
associated to the normalization $g\colon C\arrow X_{\overline{k}}$.
Finally, the geometric genus of
$X_{\overline{k}}$ is not zero (otherwise $G$ would be affine; the Jacobian
$\Pic^0_{X/\overline{k}}$ is an extension of the Jacobian of the normalization
by a smooth connected affine group). Tate showed
that the geometric and arithmetic genera
differ by a multiple of $(p-1)/2$ for a geometrically integral
regular projective curve $X$ over a field of characteristic $p$
\cite{Tategenus}; see Schr\"oer \cite{Schroer}
for a proof in the language of schemes.
So $X$ must have
arithmetic genus at least $1+(p-1)/2=(p+1)/2$, as claimed.

\section{Mordell-Weil theorem for pseudo-abelian varieties}

One can expect many properties of abelian varieties to extend
to pseudo-abelian varieties. We show here that
the Mordell-Weil theorem holds for pseudo-abelian varieties.

\begin{proposition}
\label{mordell}
Let $E$ be a pseudo-abelian variety over a field $k$ which
is finitely generated over the prime field.
Then the abelian group $E(k)$ is finitely generated.
\end{proposition}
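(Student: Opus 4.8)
The plan is to combine the structure theorem with classical Mordell--Weil theory, isolating the unipotent quotient as the source of all the difficulty. By Theorem~\ref{pseudo}, $E$ is commutative and fits into a unique extension $1\arrow A\arrow E\arrow U\arrow 1$ with $A$ an abelian variety and $U$ smooth connected commutative unipotent. Passing to $k$-points gives an injection $A(k)\inj E(k)$ whose cokernel is a subgroup of $U(k)$. Since $k$ is finitely generated over the prime field, $A(k)$ is finitely generated by the Mordell--Weil theorem for abelian varieties over finitely generated fields (Lang--N\'eron). So everything comes down to controlling $I:=E(k)/A(k)\subseteq U(k)$.

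I would first reduce this to a finiteness statement about $p$-power torsion. A smooth connected commutative unipotent $k$-group in characteristic $p$ is killed by some power $p^m$, so $U$, the group $U(k)$, and hence $I$ all have exponent dividing $p^m$. Because $[p^m]=0$ on $U$, multiplication by $p^m$ carries $E$ into $A$ and defines a homomorphism $\phi\colon E\arrow A$ restricting to the isogeny $[p^m]_A$ on $A$; thus $\phi$ is surjective with kernel $E[p^m]$. On $k$-points this yields $E(k)/E[p^m](k)\inj A(k)$, so $E(k)$ is finitely generated provided $E[p^m](k)$ is. Now $E[p^m]$ is an extension of $U$ by the finite group scheme $A[p^m]$, so $E[p^m](k)$ maps to $U(k)$ with finite kernel $A(k)[p^m]$; since its image lies in the bounded-exponent group $U(k)$, it suffices to show that this image is \emph{finite}. (Alternatively, since $I$ has bounded exponent one checks directly that $E(k)$ is finitely generated as soon as its torsion is finite, and the prime-to-$p$ torsion of $E(k)$ coincides with that of $A(k)$ because $[n]$ is an automorphism of $U$ for $n$ prime to $p$; so again only a finiteness of $p$-power torsion is at stake.)

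The assertion is now that only finitely many points of $U(k)$ lift to $p^m$-torsion points of $E$, and this is where the pseudo-abelian hypothesis is indispensable. For the split group $E=A\times\Ga$, which is not pseudo-abelian, the image of $E(k)$ in $\Ga(k)=k$ is all of $k$ and is infinite; so any argument must exploit that $E$ contains no copy of $\Ga$. In cohomological terms, the liftable points form the kernel of the connecting map $U(k)\arrow H^1(k,A[p^m])$ attached to $1\arrow A[p^m]\arrow E[p^m]\arrow U\arrow 1$, and the claim is that this kernel is finite.

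To prove this finiteness I would argue by specialization, using that $k$ is finitely generated over $\F_p$. Realize $k$ as the function field of an integral variety $X$ over a finite field and spread $E$ out to a smooth commutative group scheme $\mathcal{E}\arrow X$. Each closed point of $X$ has finite, hence perfect, residue field, so the corresponding fiber has only finitely many rational points, being of finite type over a finite field; and a torsion point of $E(k)$ specializes into these finite groups over a dense open subset of $X$. The main obstacle is precisely the $p$-power part: the kernel $E[p^m]$ is far from \'etale---it even has positive dimension---so specialization at a single place is not injective on it and a naive reduction does not separate the liftable torsion points. I expect the crux to be a height estimate: the $p^m$-torsion points lie in $\ker\phi$, where $\phi$ records the abelian variety $A$ on which heights are well behaved, and bounding the heights of the liftable points in terms of $A$ would let finiteness follow from the finiteness of points of bounded height over the finitely generated field $k$.
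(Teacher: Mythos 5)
Your reductions are correct up to the decisive point, but the decisive point is missing. Using Theorem \ref{pseudo} you correctly reduce the proposition to the claim that only finitely many points of $U(k)$ lift to $E[p^m](k)$ --- equivalently, that $E[p^m](k)$ is finite. That claim is never proved: your last paragraph is an announced strategy, not an argument, and, as you yourself observe, it breaks down exactly where the pseudo-abelian hypothesis has to enter. Specialization cannot work naively because $E[p^m]$ is a positive-dimensional, non-smooth group scheme, and the height idea cannot work as formulated: the points you need to control lie in $\ker(\phi)=E[p^m]$, hence map to $0$ in $A$, so heights coming from $A$ carry no information about them. The example $E=A\times\Ga$ correctly identifies \emph{where} pseudo-abelianness must be used, but your proposal never actually uses it.

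The missing ingredient is a single lemma, which is also the engine of the paper's own proof: for any $k$-group scheme $H$ of finite type, the Zariski closure of $H(k_s)$ in $H$ is its maximal smooth $k$-subgroup \cite[Lemma C.4.1]{CGP}. Apply this with $H=E[p^m]$. Since $E[p^m]$ is an extension of the affine group $U$ by the finite group scheme $A[p^m]$, it is affine; hence the identity component of its maximal smooth $k$-subgroup is a smooth connected affine $k$-subgroup of $E$ (normal automatically, since $E$ is commutative by Theorem \ref{pseudo}), and is therefore trivial because $E$ is pseudo-abelian. So the Zariski closure of $E[p^m](k_s)$ is finite, hence $E[p^m](k)\subseteq E[p^m](k_s)$ is finite, and your argument closes --- with no spreading out, no specialization, and no heights; the hypothesis that $k$ is finitely generated over the prime field is needed only for Lang--N\'eron applied to $A$. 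For comparison, the paper avoids your reduction altogether: it writes $E$ via Raynaud's decomposition $1\arrow H\arrow E\arrow B\arrow 1$ with a connected affine group scheme $H$ on the bottom and an abelian variety $B$ on top, applies the same lemma to $H$ (pseudo-abelianness forces the maximal smooth connected subgroup of $H$ to be trivial, so $H(k_s)$ and hence $H(k)$ is finite), and concludes from the exact sequence $H(k)\arrow E(k)\arrow B(k)$ with $B(k)$ finitely generated.
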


\begin{proof}
If $k$ has characteristic zero, then $E$ is an abelian variety
and this is the usual Mordell-Weil theorem \cite[Chapter 6]{Lang}.
So let $k$ be a finitely generated field over $\F_p$.
As with any connected group scheme over $k$, we can write
$E$ as an extension
$$1\arrow H\arrow E\arrow B\arrow 1$$
with $H$ a connected affine $k$-group scheme and $B$ an abelian variety
\cite[Lemme IX.2.7]{Raynaud}.
Since $E$ is a pseudo-abelian variety, $E$
is commutative and the maximal smooth connected $k$-subgroup of $H$ is trivial.
Note that we can define the maximal smooth $k$-subgroup
of any $k$-group scheme $H$ as the Zariski closure
of the group $H(k_s)$ \cite[Lemma C.4.1]{CGP}.
So $H(k_s)$ is finite, and so $H(k)$ is finite.
By the exact sequence $H(k)\arrow E(k)\arrow B(k)$,
where $B(k)$ is finitely generated by Mordell-Weil,
$E(k)$ is finitely generated.
\end{proof}

\section{Birational characterization of pseudo-abelian varieties}

In this section we show that pseudo-abelian varieties
can be characterized among all smooth algebraic groups
without using the group structure. In fact,
the birational equivalence class of a smooth connected group $G$
over a field $k$
is enough to determine whether $G$ is a pseudo-abelian variety.
This makes pseudo-abelian varieties a very natural class
of algebraic groups.
Theorem \ref{bir} says
that a smooth connected $k$-group is pseudo-abelian if and only if 
it is not ``smoothly uniruled'', a notion which we will define. 

As usual,
a variety $X$ over a field $k$ is {\it uniruled }if
there is a variety $Y$ over $k$ and a dominant rational map $Y\times \P^1
\dashrightarrow X$ over $k$ which does not factor
through $Y$ \cite[Proposition IV.1.3]{Kollar}.
We say that a variety $X$
is {\it rationally connected }if a compactification of $X$
is rationally
connected in the usual sense \cite[Definition IV.3.2.2]{Kollar}. Equivalently,
$X$ is rationally connected if and only if there is a variety $Y$ over $k$
and a rational map $u\colon
Y\times \P^1\dashrightarrow X$ over $k$ such that the
associated map $u^{(2)}\colon  Y\times \P^1\times \P^1\dashrightarrow X
\times_k X$ is dominant. Next, a variety $X$ over a field $k$
is {\it generically
smooth }if the smooth locus of $X$ over $k$ is nonempty. Over a perfect
field, every variety is generically smooth. 

We now make a new definition. A generically smooth
variety $X$ over a field $k$ is {\it smoothly uniruled }if there
are generically smooth $k$-varieties $B$ and $E$ with
dominant rational maps
$$ \xymatrix{
E \ar@{-->}[r]\ar@{-->}[d] &X\\
B & }$$
such that the generic
fiber of $E\dashrightarrow B$ is a generically smooth and rationally connected
variety over $k(B)$,
and $E\dashrightarrow X$ does not factor through $B$.
Smooth uniruledness
depends only on the birational equivalence class of $X$ over $k$.

It is clear that a smoothly uniruled variety is
uniruled. The converse holds for $k$ perfect, but not in general,
as Theorem \ref{bir} will show. (Being ``smoothly uniruled''
does not imply being ``separably uniruled'',
which is stronger than uniruledness
even over an algebraically closed field of positive characteristic
\cite[Definition IV.1.1]{Kollar}.)
Note that uniruledness is a geometric notion; that is,
a $k$-variety $X$ (which need not be generically smooth)
is uniruled if and only if $X_{\overline{k}}$
has uniruled irreducible components \cite[Proposition IV.1.3]{Kollar}.
That is not true for smooth uniruledness (over an imperfect field $k$),
as Theorem \ref{bir} will imply.
At least smooth uniruledness
does not change under separable algebraic extensions of $k$.
Since smooth uniruledness turns out to be an interesting property
of algebraic groups, it should be worthwhile to study
smooth uniruledness for other classes of varieties over imperfect
fields.

\begin{theorem}
\label{bir}
Let $G$ be a smooth connected group over a field $k$.
Then $G$ is an abelian variety if and only if $G$
is not uniruled. And $G$ is a pseudo-abelian variety if and only if $G$
is not smoothly uniruled. In particular, whether $G$ is a pseudo-abelian
variety depends only on the birational equivalence class of $G$
over $k$.
\end{theorem}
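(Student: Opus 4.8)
The plan is to prove the two equivalences separately, in each case showing that failure of the group-theoretic condition produces the relevant kind of uniruledness, and conversely that uniruledness forces a nontrivial affine normal subgroup.

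\medskip

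\noindent\emph{The abelian variety case.} For the first equivalence, if $G$ is not an abelian variety, then by Brion's theorem $G$ is an extension $1\arrow A\arrow G\arrow U\arrow 1$ with $A$ a semi-abelian variety and $U$ smooth connected affine of positive dimension (otherwise $G$ would be proper, hence an abelian variety). A smooth connected affine $k$-group of positive dimension is uniruled: over $\overline{k}$ it is an extension of a product of $\Gm$'s and $\Ga$'s by a unipotent group, and such groups are rational varieties, hence uniruled, and uniruledness is geometric. Since $G\arrow U$ is a surjection of smooth groups, $G$ itself is uniruled (one can rule $G$ by translating the rulings on $U$ back up, or directly observe $G$ maps onto a positive-dimensional uniruled group via a smooth fibration). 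Conversely, an abelian variety is not uniruled: a nonconstant rational map $\P^1\dashrightarrow A$ extends to a morphism (abelian varieties contain no rational curves), forcing any $Y\times\P^1\dashrightarrow A$ to factor through $Y$. So I would invoke the standard fact that abelian varieties admit no nonconstant rational maps from $\P^1$.

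\medskip

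\noindent\emph{The pseudo-abelian case --- the forward direction.} If $G$ is \emph{not} a pseudo-abelian variety, then it has a nontrivial smooth connected affine normal $k$-subgroup $S$. Here I want to build a smooth uniruling, so I take $B$ to be (a smooth model of) the quotient $G/S$, $E=G$, with $E\arrow B$ the quotient map and $E\arrow X=G$ the identity. The generic fiber of $G\arrow G/S$ is a torsor under $S$ over $k(G/S)$; I need this fiber to be generically smooth and rationally connected over $k(B)$. Generic smoothness holds because $S$ is smooth. Rational connectedness is the crux: $S$ is a smooth connected affine $k$-group, and its $k(B)$-torsor must be shown rationally connected. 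The key input is that smooth connected affine groups, and torsors under them, are rationally connected --- over any field a smooth connected linear algebraic group is a rationally connected variety (its compactification is), since it is unirational and unirational implies rationally connected; and a torsor under such a group, being a form of it, is likewise unirational hence rationally connected. I would need that $S$ has positive dimension so the fibration is nontrivial and $E\dashrightarrow X$ does not factor through $B$, which holds since $S$ is nontrivial and smooth connected.

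\medskip

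\noindent\emph{The pseudo-abelian case --- the converse, and the main obstacle.} Conversely, suppose $G$ is a pseudo-abelian variety; I must show $G$ is not smoothly uniruled. By Theorem \ref{pseudo}, $G$ is commutative and sits in $1\arrow A\arrow G\arrow U\arrow 1$ with $A$ an abelian variety and $U$ smooth connected commutative unipotent. The heart of the argument is that a $k$-wound unipotent group admits no nonconstant $k$-morphism from a rationally connected variety, and more generally that the unipotent quotient $U$ controls all the ``rational connectivity'' in $G$. The plan is: given a smooth uniruling with rationally connected generic fibers of $E\arrow B$, compose with $G\arrow U$ and show the rationally connected fibers must map to points in the wound part of $U$, forcing $E\dashrightarrow G$ to factor through $B$ after all. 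I anticipate the \textbf{main obstacle} is precisely this step: proving that a generically smooth rationally connected $k(B)$-variety admits no nonconstant rational map to a $k$-wound unipotent group (equivalently, that wound groups receive no nonconstant maps from rationally connected varieties). This should follow from the fact that a $k$-wound group contains no copy of $\Ga$ and, by Tits/Oesterlé theory, contains no $k$-rational curves at all over its wound part, so any map from a rationally connected variety (which is covered by rational curves) must be constant on the wound quotient; the subtlety is handling the abelian variety $A$ and the split part $(\Ga)^n$ of $U$ simultaneously and tracking that these do not reintroduce unwanted rulings compatible with the \emph{smooth} (rather than merely geometric) fibration structure.
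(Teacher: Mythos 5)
The heart of this theorem is the converse in the pseudo-abelian case, and there you have only a plan with an acknowledged hole; unfortunately the plan is built on a route that cannot work. You propose to compose with $G\arrow U$ and use that ``a $k$-wound unipotent group admits no nonconstant map from a rationally connected variety.'' But the unipotent quotient $U$ of a pseudo-abelian variety need not be wound at all: by Corollary \ref{supersingular}, every smooth connected commutative group of exponent $p$ over an imperfect field, \emph{including $\Ga$ itself}, occurs as the unipotent quotient of a pseudo-abelian variety. When $U\cong\Ga$ the wound part of $U$ is trivial and your strategy yields nothing, yet these are exactly among the groups that must be handled. Moreover, with the paper's (geometric) notion of rational connectedness, the statement you want is false as stated: a wound group $W$ becomes isomorphic to affine space over $\overline{k}$, so $W$ is itself rationally connected, and the identity map is a nonconstant map from a rationally connected variety to $W$. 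What separates ``smoothly uniruled'' from ``uniruled'' is not woundness of $U$ but generic smoothness, i.e.\ density of separable points. The paper's proof never touches $U$: it uses the presentation $G=(A\times H)/K$ of Theorem \ref{pseudo}, in which $H$ is an affine $k$-group \emph{scheme}, typically non-smooth, whose maximal smooth subgroup is trivial because $G$ is pseudo-abelian. One takes the fiber $Y$ of $E\dashrightarrow B$ over a $k_s$-point of $B$ (such points are dense since $B$ is generically smooth); $Y$ maps to a single $k_s$-point of the abelian variety $A/K$, hence into a torsor under $H_{k_s}$; that torsor is trivial because $Y(k_s)$ is dense in $Y$ (generic smoothness of $Y$); and a nonconstant rational map $Y\dashrightarrow H_{k_s}$ then forces $H(k_s)$ to be infinite, so the maximal smooth $k$-subgroup of $H$ has positive dimension, contradicting pseudo-abelianness. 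This mechanism---detecting a smooth subgroup of a non-smooth group scheme via separable points---is the idea missing from your proposal.

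Two further steps fail as written. First, your justification that the fibers in the forward direction are rationally connected invokes ``over any field a smooth connected linear algebraic group is unirational''; this is false over imperfect fields---some wound unipotent groups contain no $k$-rational curves at all (see Section \ref{notation} and Example \ref{dim1ex})---and this failure is precisely the phenomenon the whole paper is about. The correct justification is that rational connectedness in the paper's sense is geometric: $S_{\overline{k}}$ is rational, and a torsor under $S$ trivializes over an algebraically closed field. (The paper sidesteps torsors altogether by taking $E=G\times N$ and $B=G$ with the multiplication map $G\times N\arrow G$, so the generic fiber is literally $N_{k(G)}$.) Second, in the abelian case, your claim that $U$ has positive dimension is wrong: if $U$ is trivial then $G$ is semi-abelian (e.g.\ $G=\Gm$), which is not proper, so your parenthetical justification collapses and the non-abelian semi-abelian case is missed. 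Also, ``$G$ surjects onto a positive-dimensional uniruled group, hence is uniruled'' is a non sequitur---an elliptic K3 surface surjects onto $\P^1$ but is not uniruled---and lifting rational curves through the proper kernel $A$ would need an argument. The paper avoids both problems by applying Chevalley's theorem over $\overline{k}$ to obtain a nontrivial smooth connected affine normal subgroup $N\subset G_{\overline{k}}$, which is rational, and using the product map $G_{\overline{k}}\times N\arrow G_{\overline{k}}$; since uniruledness is geometric, this suffices.
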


\begin{proof}
If $G$ is not an abelian variety, then $G_{\overline{k}}$
has a nontrivial smooth connected affine normal subgroup $N$ 
over $\overline{k}$, by Chevalley's theorem. Such a group $N$ is rational
\cite[Remark 14.14]{Borel} and has positive dimension.
Using the product map $G_{\overline{k}}
\times N \arrow G_{\overline{k}}$,
it follows that $G_{\overline{k}}$ is uniruled.
Equivalently, $G$ is uniruled.
Conversely, if $G$ is an abelian variety, then $G_{\overline{k}}$
contains no rational curves, and so $G$ is not uniruled.

If $G$ is not a pseudo-abelian variety, then $G$ has a nontrivial
smooth connected affine normal $k$-subgroup $N$. Then $N_{\overline{k}}$
is rational and so $N$ is rationally connected, as that is a geometric
property \cite[Ex.~IV.3.2.5]{Kollar}. The diagram
$$\xymatrix{
G\times N \ar[r]^{gn} \ar[d]_{g} & G\\
G & }$$
has the properties needed to show that $G$ is smoothly uniruled:
the base variety $G$ is generically smooth, the generic fiber $N_{k(G)}$
of the vertical map is generically smooth and rationally connected,
and the horizontal map $G\times N\arrow G$
is dominant and does not factor through the vertical map.

Conversely, let $G$ be a pseudo-abelian variety over a field $k$. Suppose that
$G$ is smoothly uniruled. Let
$$\xymatrix{
E \ar@{-->}[r]\ar@{-->}[d] &G\\
B & }$$
be a diagram as in the definition of smooth uniruledness.
Thus the generic
fiber of $E\dashrightarrow B$ is a generically smooth and rationally connected
variety over $k(B)$,
and $E\dashrightarrow G$ does not factor through $B$. It follows that
these properties hold over a dense open subset of $B$.
Because $B$ is a generically smooth $k$-variety, $B(k_s)$ is Zariski dense
in $B$. So there is a point in $B(k_s)$ whose inverse image $Y$
in $E$ is a generically smooth,
rationally connected variety over $k_s$
with a nonconstant rational map $f\colon Y\dashrightarrow
G_{k_s}$. In particular, $Y$ has positive dimension.
Here $Y(k_s)$ is Zariski dense in $Y$
because $Y$ is generically smooth.

By Theorem \ref{pseudo}, we can write
the pseudo-abelian variety $G$ as $(A\times H)/K$
for some abelian variety $A$, commutative affine $k$-group scheme $H$,
and commutative finite $k$-group scheme $K$. The image of the rationally
connected $k_s$-variety $Y$ in the abelian variety $A/K$ must be
a $k_s$-rational point. So $f$ maps $Y$ into the inverse image
of this point in $G_{k_s}$, which is a principal $H_{k_s}$-bundle
over $\Spec(k_s)$. Since $Y(k_s)$ is Zariski dense in $Y$,
this principal bundle
has a $k_s$-rational point and hence is trivial. Thus we get
a nonconstant rational map from the generically smooth
variety $Y$ to $H_{k_s}$. It follows that $H(k_s)$ is infinite,
and so the maximal smooth $k$-subgroup of $H$ has positive dimension.
Such a subgroup is affine and contained in $G$, 
contradicting that $G$ is a pseudo-abelian variety.
\end{proof}

\section{Construction of pseudo-abelian varieties: supersingular case}

The unipotent quotient of a pseudo-abelian variety
over a field $k$
is a smooth connected commutative unipotent group over $k$.
In this section, we show that when $k$ is imperfect of
characteristic $p$,
every smooth connected commutative
group of exponent $p$ over $k$ occurs as the unipotent quotient
of some pseudo-abelian variety $E$, even in the special case
where the abelian subvariety of $E$ is a supersingular elliptic curve
(Corollary \ref{supersingular}). Thus there are many more pseudo-abelian
varieties over an imperfect field than Raynaud's original
examples, Weil restrictions of abelian varieties. (Weil restrictions
occur only in certain dimensions. For example,
if a Weil restriction $R_{l/k}B$ for a purely inseparable extension
$l/k$ has its maximal abelian subvariety
of dimension 1, then the abelian variety $B$
has dimension 1, and so the unipotent quotient
of $R_{l/k}B$ has dimension $p^r-1$ for some $r$.)

\begin{definition}
\label{highdef}
Let $U$ be a smooth connected commutative unipotent group over a field $k$.
Let $K$ be a finite commutative $k$-group scheme.
We say that a commutative extension
$$1\arrow K\arrow H\arrow U \arrow 1$$
is {\it highly nontrivial }if the maximal smooth connected $k$-subgroup of $H$
(which is necessarily unipotent)
is trivial.
\end{definition}

For us, the point of the notion of highly nontrivial extensions is:

\begin{lemma}
\label{highly}
(1) Let $1\arrow K\arrow H\arrow U\arrow 1$ be a highly nontrivial
extension of a smooth connected commutative unipotent
group $U$ over a field $k$.
Let $A$ be an abelian variety over $k$ that contains $K$ as a subgroup scheme.
Then $E:=(A\times H)/K$ is a pseudo-abelian variety which is an extension
$$1\arrow A\arrow E\arrow U\arrow 1.$$

(2) Conversely, let $E$ be any pseudo-abelian variety over a field $k$
of characteristic $p$.
Write $E$ as an extension
$1\arrow A\arrow E\arrow U\arrow 1$ with $A$ an abelian
variety and $U$ a smooth connected commutative unipotent group.
Let $p^r$ be the exponent of $U$. Then $E$ can be written
as $(A\times H)/A[p^r]$ for some
highly nontrivial extension $1\arrow A[p^r]
\arrow H\arrow U\arrow 1$.
\end{lemma}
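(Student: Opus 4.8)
The plan is to prove the two parts by passing between the ``sum'' description $E=(A\times H)/K$ and the extension classes living in $\Ext^1(U,-)$.

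For part (1), I would first check that the stated sequence is exact. Embedding $K$ antidiagonally in $A\times H$ via its two inclusions $K\inj A$ and $K\inj H$, the projection $A\times H\arrow U$ factors through $E$ because $K$ dies in $U$, giving a surjection $E\arrow U$; a direct computation shows that its kernel is the image of $A$ and that $A\arrow E$ is injective. Hence $E$ is an extension of the smooth connected group $U$ by the abelian variety $A$, so $E$ is smooth and connected, and it is commutative because $A\times H$ is. The key observation is the complementary description: the map $H\arrow E$ is also injective, and the quotient $E/H$ is isomorphic to $A/K$, which is an abelian variety. Now let $S$ be any smooth connected affine $k$-subgroup of $E$ (every subgroup is automatically normal, as $E$ is commutative). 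Its image under $E\arrow A/K$ is a smooth connected affine subgroup of an abelian variety, hence trivial, so $S\subseteq H$. Thus $S$ is a smooth connected subgroup of $H$, which is trivial by the hypothesis that the extension is highly nontrivial. Therefore $E$ is a pseudo-abelian variety.

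For part (2), I would realize $H$ as a lift of the extension class of $E$. The extension $1\arrow A\arrow E\arrow U\arrow 1$ defines a class $[E]\in\Ext^1(U,A)$ in the category of commutative fppf group sheaves over $k$. Since $U$ has exponent $p^r$, multiplication by $p^r$ is the zero endomorphism of $U$, and pulling $[E]$ back along it shows $p^r\,[E]=0$; equivalently $(p^r)_*[E]=0$, where $(p^r)_*$ is induced by $p^r\colon A\arrow A$. Because $p^r\colon A\arrow A$ is an isogeny, the sequence $0\arrow A[p^r]\arrow A\xrightarrow{p^r} A\arrow 0$ is exact as fppf sheaves, and the long exact sequence for $\Ext^*(U,-)$ contains
$$\Ext^1(U,A[p^r])\arrow \Ext^1(U,A)\xrightarrow{(p^r)_*}\Ext^1(U,A).$$
Since $(p^r)_*[E]=0$, the class $[E]$ is the image of some $\eta\in\Ext^1(U,A[p^r])$. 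This $\eta$ is precisely an extension $1\arrow A[p^r]\arrow H\arrow U\arrow 1$, and the assertion that $\eta$ pushes forward to $[E]$ along $A[p^r]\inj A$ is exactly the statement that $E\cong (A\times H)/A[p^r]$ for the antidiagonal embedding of $A[p^r]$.

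It remains to verify the required properties of $H$. As an extension of the affine unipotent group $U$ by the finite group scheme $A[p^r]$, the sheaf $H$ is representable by a commutative affine $k$-group scheme, and $A[p^r]$ injects into $A$ by construction. For highly nontriviality I would use the injection $H\inj E$ from part (1)'s description: any smooth connected $k$-subgroup of $H$ is then a smooth connected affine $k$-subgroup of the pseudo-abelian variety $E$, hence trivial, so the maximal smooth connected $k$-subgroup of $H$ is trivial. The main obstacle is the homological bookkeeping in part (2): one must work with commutative fppf sheaves rather than \'etale sheaves (since $p^r\colon A\arrow A$ has infinitesimal kernel when $p$ divides the relevant orders), justify the long exact $\Ext$ sequence there, confirm that the lifted class $\eta$ is representable by an honest affine group scheme, and match the pushforward with the quotient $(A\times H)/A[p^r]$. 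By contrast, part (1) is essentially formal once the isomorphism $E/H\cong A/K$ is in hand.
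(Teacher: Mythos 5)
Your proposal is correct and follows essentially the same route as the paper: part (1) is the identical argument (a smooth connected affine subgroup of $E$ maps trivially to the abelian variety $E/H\cong A/K$, hence lies in $H$ and dies by high nontriviality), and part (2) lifts the class of $E$ through the exact sequence $\Ext^1(U,A[p^r])\arrow\Ext^1(U,A)\stackrel{p^r}{\longrightarrow}\Ext^1(U,A)$ using that $\Ext^1(U,A)$ is killed by the exponent $p^r$ of $U$, exactly as the paper does. The only (minor) divergence is that you compute Ext in the category of commutative fppf sheaves and must then check representability of the lifted extension $H$, whereas the paper works directly in the abelian category of commutative $k$-group schemes of finite type (SGA3, Th\'eor\`eme VIA.5.4.2), where that step is unnecessary.
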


\begin{proof}
Let us prove (1). Clearly $E$ is
an extension $1\arrow A \arrow E\arrow U\arrow 1$.
It follows that $E$ is a smooth connected $k$-group. Clearly $E$
is commutative.

Let $N$ be a smooth connected affine $k$-subgroup of $E$.
Then $N$ must map trivially into the abelian variety $E/H=A/K$.
Therefore $N$ is contained in the subgroup scheme $H$ of $E$. Since
$H$ is a highly nontrivial extension, $N$ is trivial. Thus $E$ is
a pseudo-abelian variety, proving (1).

We turn to (2). Since $U$ has exponent $p^r$, the abelian group $\Ext^1(U,A)$
is killed by $p^r$. Consider the exact sequence
$$\Ext^1(U,A[p^r])\arrow \Ext^1(U,A)\stackrel{p^r}{\longrightarrow}
\Ext^1(U,A).$$
(Such exact sequences hold
for $\Ext$ in any abelian category, in this case the category
of commutative $k$-group schemes of finite type
\cite[Th\'eor\`eme VIA.5.4.2]{SGA3}.)
The exact sequence shows 
that the extension $E$ comes from a commutative extension
$1\arrow A[p^r]\arrow H\arrow U\arrow 1$, with $H\subset E$.
Clearly $H$ is affine. Since $E$ is a pseudo-abelian variety,
the maximal smooth connected $k$-subgroup of $H$ is trivial,
and so $H$ is a highly nontrivial extension.
\end{proof}

\begin{lemma}
\label{alphap}
Let $U$ be a smooth connected commutative group
of exponent $p$ over a field $k$ of characteristic $p$. If $k$ is imperfect,
then there is a highly nontrivial extension of $U$
by $\alpha_p$.
\end{lemma}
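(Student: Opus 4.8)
The plan is to realize the desired extension as a pullback of a fixed Artin--Schreier-type sequence and to convert ``highly nontrivial'' into a statement about $k_s$-points. First I would record the criterion: for a commutative extension $1\arrow \alpha_p\arrow H\arrow U\arrow 1$, the maximal smooth $k$-subgroup of $H$ is the Zariski closure of $H(k_s)$ (this commutes with separable base change, and every $k_s$-point is a smooth point), so its identity component is trivial exactly when $H(k_s)$ is finite. Hence the extension is highly nontrivial if and only if $H(k_s)$ is finite. I would then use the sequence $0\arrow \alpha_p\arrow \Ga\xrightarrow{F}\Ga\arrow 0$ ($F$ the Frobenius), whose pullback along a homomorphism $\phi\colon U\arrow \Ga$ is the extension $H_\phi=\{(u,z)\in U\times\Ga: z^p=\phi(u)\}$, with $H_\phi\arrow U$ the projection and kernel $\alpha_p$. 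Since $\alpha_p(k_s)=0$, a $k_s$-point of $H_\phi$ is determined by its image $u\in U(k_s)$, which lifts precisely when $\phi(u)\in k_s^p$; thus $H_\phi(k_s)\cong\{u\in U(k_s):\phi(u)\in k_s^p\}$. So the whole problem reduces to finding $\phi\colon U\arrow\Ga$ over $k$ such that $\phi(u)\in k_s^p$ forces $u$ into a finite set. (This forces $d\phi=0$, i.e.\ $\phi$ factors through the Frobenius of $U$: otherwise $H_\phi$ is smooth of positive dimension and $H_\phi(k_s)$ is infinite.)

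For the construction I would use imperfection directly. Fix $t\in k-k^p$; since $k_s/k$ is separable, $t\notin k_s^p$, so $t^{1/p}$ has degree $p$ over $k_s$ and $1,t,\ldots,t^{p-1}$ are linearly independent over $k_s^p$. Embed $U$ into $(\Ga)^{n+1}$ by a $p$-polynomial with nonzero linear part ($n=\dim U$), giving coordinate homomorphisms $x_0,\ldots,x_n\colon U\arrow\Ga$ that separate points. When $n+1\le p-1$, take $\phi=\sum_{i=0}^{n} t^{\,i+1}x_i^p$. Then $\phi(u)\in k_s^p$ means $\sum_i t^{\,i+1}x_i(u)^p$ lies in the $k_s^p$-span of $1$; since each $x_i(u)^p\in k_s^p$ and the exponents $1,\ldots,n+1$ are distinct nonzero residues, $k_s^p$-independence forces every $x_i(u)^p=0$, hence $x_i(u)=0$, hence $u=0$. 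Thus $H_\phi(k_s)$ is trivial and $H_\phi$ is highly nontrivial. (For $U$ trivial the lemma is vacuous, $H=\alpha_p$ working.)

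The hard part will be the case $\dim U\ge p-1$, or more generally small imperfection degree $[k:k^p]$, where $1,t,\ldots,t^{p-1}$ no longer give enough $k_s^p$-independent ``slots'' to separate all coordinates with a single Frobenius twist. The danger is concrete: a coordinate term with a $p$th-power coefficient, or two coordinates landing in the same residue class modulo $p$, produces a smooth connected subgroup of $U$ on which $\phi$ becomes a $p$th power, making $H_\phi(k_s)$ infinite. To resolve this I would pass to a $k_s^{p^N}$-basis $\{t^a:0\le a<p^N\}$ of $k_s$ and assign to each coordinate a term $t^{a_i}x_i^{p^{e_i}}$ occupying a distinct position $a_i$ with $p\nmid a_i$, so that membership of $\phi(u)$ in $k_s^p$ (the span of the $t^a$ with $p\mid a$) again forces each coordinate to vanish. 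Controlling these positions---a single monomial $x_i^{p^{e_i}}$ spreads over many basis elements---is the delicate bookkeeping, and is exactly where $t\notin k_s^p$ is used. As an alternative that sidesteps this, I would induct on $\dim U$ using a quotient $1\arrow U'\arrow U\arrow U''\arrow 1$ with $\dim U''=1$: form a Baer sum of a highly nontrivial extension pulled back from $U''$ and an extension restricting to the inductively given highly nontrivial extension of $U'$, and check finiteness of $H(k_s)$ separately on $U'(k_s)$ and on its image in $U''(k_s)$. I expect this finiteness verification in the general-dimension case to be the main obstacle.
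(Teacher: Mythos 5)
Your framework is sound and is in fact the paper's own construction in different packaging: the criterion ``highly nontrivial iff $H(k_s)$ is finite'' matches the paper's use of \cite[Lemma C.4.1]{CGP}, and the paper's extension $R_{l/k}\alpha_p$ of $(\Ga)^{p-1}$ by $\alpha_p$ (with $l=k(t^{1/p})$) is exactly the pullback of the Frobenius sequence $0\arrow \alpha_p\arrow \Ga\arrow \Ga\arrow 0$ along $\phi=-(tx_1^p+\cdots+t^{p-1}x_{p-1}^p)$, which is your construction for $\dim U\le p-2$. But the heart of the lemma is the case of arbitrary dimension: the reduction via $p$-polynomial embeddings needs highly nontrivial extensions of $(\Ga)^s$ for arbitrarily large $s$, and for $p=2$ your completed case covers only $U=0$. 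Precisely there your proposal stops at an acknowledged gap, and neither of your two sketches closes it as stated. The paper handles this case by a fiber product of Weil restrictions along the tower $k\subset k(t^{1/p^{r-1}})\subset k(t^{1/p^r})$, which unwinds to the pullback of Frobenius along $\phi=-\sum t^m x_m^{p^r}$, the sum over $0<m<p^r$ with $p\nmid m$, together with the field-intersection Lemma \ref{intersection}.

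Concretely, sketch (a) misidentifies $k_s^p$: the set $\{t^a: 0\le a<p^N\}$ is a $k_s^{p^N}$-basis of $k_s$, with $k_s^p$ equal to the span of the $t^a$ with $p\mid a$, only when $[k_s\colon k_s^p]=p$ (it fails for $k=\F_p(s,t)$). More seriously, with varying exponents $e_i$ the term $t^{a_i}x_i(u)^{p^{e_i}}$ ($e_i\ge 1$) lies in $t^{a_i \bmod p}k_s^p$, so distinct positions sharing a residue class modulo $p$ get lumped into one relation, and there are only $p-1$ nonzero residues; if the recursion ever brings an exponent down to $0$, the resulting relation (e.g. $c_1=-tc_2^{p^e}$) has infinitely many solutions in $k_s$, so $H_\phi(k_s)$ is infinite. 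The repair -- which is what the paper's tower achieves -- is to take \emph{all} exponents equal: $\phi=\sum_i t^{a_i}x_i^{p^r}$ with the $a_i$ distinct, prime to $p$, and less than $p^r$. Then each coefficient $x_i(u)^{p^r}$ lies in $k_s^{p^r}$, nothing spreads, and if $\phi(u)\in k_s^p$ then grouping by $a_i\bmod p$, using independence of $1,t,\ldots,t^{p-1}$ over $k_s^p$, dividing by $t^{a_i\bmod p}$ and extracting $p$th roots reduces to the independence of $1,t,\ldots,t^{p^{r-1}-1}$ over $k_s^{p^{r-1}}$, forcing every $x_i(u)=0$; this is the content of Lemma \ref{intersection}. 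Sketch (b) has two genuine flaws: an extension of $U$ by $\alpha_p$ restricting to a prescribed extension of $U'$ need not exist (the obstruction lies in $\Ext^2(U'',\alpha_p)$, which you have not shown to vanish), and finiteness of $H(k_s)$ cannot be checked separately over $U'$ and over $U''$. Indeed, writing $E_1$ for the extension pulled back from $U''$ and $E_2$ for the other one, a $k_s$-point of the Baer sum over $u\in U(k_s)$ exists iff $[E_{1,u''}]=-[E_{2,u}]$ in $H^1_{\fppf}(k_s,\alpha_p)\cong k_s/k_s^p\neq 0$, and this can hold for infinitely many $u$ with both classes nonzero, so finiteness of the two separate vanishing loci proves nothing.
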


\begin{proof}
It suffices to show that there are highly nontrivial extensions
of $(\Ga)^s$ by $\alpha_p$ over $k$ for some arbitrarily large numbers $s$.
Indeed, having a highly nontrivial extension is a property which
passes from one smooth connected commutative unipotent $k$-group
to any smooth connected $k$-subgroup. And every smooth connected
commutative $k$-group of exponent $p$ and dimension $n$
is isomorphic to the subgroup of $(\Ga)^{n+1}$ defined 
by some $p$-polynomial over $k$.

Since $k$ is imperfect, we can choose an element
$t$ in $k^*$ which is not a $p$th power.
We will exhibit a highly nontrivial extension
$1\arrow \alpha_p\arrow H\arrow (\Ga)^{(p-1)p^{r-1}}\arrow 1$
over $k$,
for any $r\geq 1$. 
For clarity, first take $r=1$. That is, we want
to construct a highly nontrivial extension
$1\arrow \alpha_p\arrow H\arrow (\Ga)^{p-1}\arrow 1$.
Let $l=k(u)$ where $u=t^{1/p}$; thus $l$ is a field of degree $p$
over $k$. We will take $H$ to be the Weil restriction
$R_{l/k}\alpha_p$. A general reference on Weil restriction
is \cite[Appendix A.5]{CGP}. Note that Weil restriction need not multiply
dimensions by $p=[l\colon k]$ for non-smooth schemes such as the 0-dimensional
scheme $\alpha_p$.
In fact, $R_{l/k}\alpha_p$ has dimension $p-1$; explicitly, it is the
$k$-subgroup scheme
$$\{ (a_0,a_1,\ldots,a_{p-1})\in (\Ga)^{p}: a_0^p+t
a_1^p+\cdots + t^{p-1}a_{p-1}^p=0\},$$
as we find by writing out the equation $(a_0+a_1u+\ldots
+a_{p-1}u^{p-1})^p=0$. We check immediately that the kernel of
the natural homomorphism
$R_{l/k}\alpha_p\arrow (R_{l/k}\Ga)/\Ga$
is $\alpha_p$. The resulting injection
$$(R_{l/k}\alpha_p)/\alpha_p\arrow (R_{l/k}\Ga)/\Ga$$
is an isomorphism, because the two $k$-group schemes
have the same dimension and $(R_{l/k}\Ga)/\Ga$ is smooth and connected.
The quotient group $(R_{l/k}\Ga)/\Ga$
is isomorphic to $(\Ga)^p/\Ga\cong (\Ga)^{p-1}$. Thus $H$ is
an extension of $(\Ga)^{p-1}$ by $\alpha_p$, as we want. By
construction, $H$ is commutative of exponent $p$.

It remains to show that the maximal smooth connected $k$-subgroup
of $H$ is trivial. We have $H(k_s)=(R_{l/k}\alpha_p)(k_s)=\alpha_p(l_s)=0$,
since $l_s$ is a field of characteristic $p$. Therefore
every smooth $k$-subgroup of $H$, connected or not, is trivial.
So $H$ is a highly nontrivial extension as we want, in the case $r=1$.

We now generalize the construction to exhibit
a highly nontrivial extension
$1\arrow \alpha_p\arrow H\arrow (\Ga)^{(p-1)p^{r-1}}\arrow 1$
over $k$
for any $r\geq 1$. Again, let $t$ be an element of $k^*$
which is not a $p$th power.

Let $u=t^{1/p}$, $v=t^{1/p^{r-1}}$, and $w=t^{1/p^r}$.
First define 
\begin{align*}
U& :=(R_{k(w)/k}\Ga)/(R_{k(v)/k}\Ga)\\
&=R_{k(v)/k}((R_{k(w)/k(v)}\Ga)/\Ga)\\
&\cong (\Ga)^{(p-1)p^{r-1}}.
\end{align*}
We define an extension group $1\arrow
\alpha_p\arrow H\arrow U\arrow 1$ as the fiber product
\begin{align*}
H& := [R_{k(v)/k}((R_{k(w)/k(v)}\Ga)/\Ga)]\times_{(R_{k(u)/k}\Ga)/\Ga}
R_{k(u)/k}\alpha_p\\
& = U\times_{(\Ga)^{p-1}}R_{k(u)/k}\alpha_p.
\end{align*}
Here the homomorphism
$R_{k(v)/k}((R_{k(w)/k(v)}\Ga)/\Ga)\arrow (R_{k(u)/k}\Ga)/\Ga$
on the left corresponds on $k$-rational
points to taking the $p^{r-1}$st power, and the homomorphism
on the right is $R_{k(u)/k}\alpha_p\arrow (R_{k(u)/k}\alpha_p)/\alpha_p
=(R_{k(u)/k}\Ga)/\Ga$. Since the latter homomorphism is a surjection
with kernel $\alpha_p$, it is clear that $H$ is an extension
$1\arrow \alpha_p\arrow H\arrow U\arrow 1$. The definition shows
that $H$
is commutative of exponent $p$.

It remains to show that $H$ is a highly nontrivial extension.
We will prove the stronger
statement that the maximal smooth $k$-subgroup of $H$ is trivial.
That holds if $H(k_s)=1$. By definition of $H$,
$H(k_s)$ is the fiber product
$$H(k_s)=[k_s(w)/k_s(v)]\times_{k_s(u)/k_s} \alpha_p(k_s(u)),$$
where the left homomorphism is the $p^{r-1}$st power.
Since $k_s(u)$ is a field of characteristic $p$, $\alpha_p(k_s(u))=0$.
So $H(k_s)=\{y\in k_s(w)/k_s(v): y^{p^{r-1}}\in k_s\}$.
This group is zero by the following lemma, applied to the field $F=k_s$.

\begin{lemma}
\label{intersection}
Let $F$ be a field of characteristic $p>0$ with an element
$t\in F$ that is not a $p$th power in $F$. Let $r$ be a positive integer.
Let $u=t^{1/p}$, $v=t^{1/p^{r-1}}$, and $w=t^{1/p^r}$. Then
$$F(w)\cap F^{1/p^{r-1}}=F(v).$$
\end{lemma}

\begin{proof}
The intersection $F(w)\cap F^{1/p^{r-1}}$ is a subfield of
$F(w)$ that contains $F(v)$. It is equal to $F(v)$ because
$[F(w)\colon F(v)]=p$ is prime and $w$ is not in $F^{1/p^{r-1}}$
(because $w^{p^r}=t$ and $t$ is not a $p$th power in $F$).
\end{proof}

Thus $H(k_s)=0$. We have shown that
the extension $1\arrow K\arrow H\arrow (\Ga)^{p^{r-1}(p-1)}
\arrow 1$ is highly nontrivial, proving Lemma \ref{alphap}.
\end{proof}

\begin{corollary}
\label{supersingular}
For any smooth connected commutative group $U$
of exponent $p$
over an imperfect field $k$ and any supersingular elliptic
curve $A$ over $k$, there is a pseudo-abelian variety over $k$
which is an extension of $U$ by $A$.
\end{corollary}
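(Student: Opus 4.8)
The plan is to combine Lemmas \ref{alphap} and \ref{highly}(1), the bridge between them being a standard property of supersingular elliptic curves. First I would apply Lemma \ref{alphap}: since $k$ is imperfect of characteristic $p$ and $U$ is a smooth connected commutative group of exponent $p$, there exists a highly nontrivial extension $1\arrow \alpha_p\arrow H\arrow U\arrow 1$ over $k$.

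The key geometric input is that the supersingular elliptic curve $A$ contains $\alpha_p$ as a $k$-subgroup scheme. Indeed, the kernel of the relative Frobenius $F\colon A\arrow A^{(p)}$ is a finite $k$-group scheme of order $p$ and height one, determined up to isomorphism by the one-dimensional Lie algebra $\text{Lie}(A)$ together with its $p$-operation. Because $A$ is supersingular, this $p$-operation vanishes (equivalently, the Hasse invariant of $A$ is zero), so $\ker F\cong \alpha_p$ over $k$, giving a closed embedding $\alpha_p\inj A$ over $k$.

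Finally I would invoke Lemma \ref{highly}(1) with $K=\alpha_p$ and this abelian variety $A$: the group $E:=(A\times H)/\alpha_p$ is then a pseudo-abelian variety over $k$ sitting in an exact sequence $1\arrow A\arrow E\arrow U\arrow 1$, which is precisely an extension of $U$ by $A$. The only point requiring care is the identification $\ker F\cong \alpha_p$ over $k$ itself rather than merely over $\overline{k}$; this holds because the $p$-operation on $\text{Lie}(A)$ is intrinsic to $A$ over $k$ and vanishes exactly when $A$ is supersingular. Everything else is a formal consequence of the two lemmas, so the argument is short.
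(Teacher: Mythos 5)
Your proposal is correct and is essentially identical to the paper's proof: both apply Lemma \ref{alphap} to produce a highly nontrivial extension of $U$ by $\alpha_p$, identify $\alpha_p$ with the Frobenius kernel of the supersingular curve $A$, and conclude via Lemma \ref{highly}(1) that $(A\times H)/\alpha_p$ is the desired pseudo-abelian variety. The only difference is that you justify the isomorphism $\ker F\cong\alpha_p$ over $k$ (via the vanishing $p$-operation on $\mathrm{Lie}(A)$), a fact the paper asserts without proof.
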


The assumption that $k$ is imperfect is essential, by Chevalley's theorem:
every pseudo-abelian variety over a perfect field is an abelian variety.
In particular, there is a pseudo-abelian variety
$1\arrow A\arrow E\arrow \Ga \arrow 1$ over $k$ with $A$ a supersingular
elliptic curve whenever $k$ is imperfect, but not when $k$ is perfect.

\begin{proof}
Let $A$ be a supersingular elliptic curve over $k$. Then
the kernel of the Frobenius homomorphism on $A$ is isomorphic
to $\alpha_p$ over $k$. Since $k$ is imperfect, Lemma \ref{alphap}
shows that there is a highly nontrivial extension $H$ of $U$ by
$\alpha_p$. By Lemma \ref{highly}(1), $E=(A\times H)/\alpha_p$
is a pseudo-abelian variety. It is an extension
of $U$ by $A$.
\end{proof}

\section{Construction of pseudo-abelian varieties: ordinary case}
\label{ordsection}

This section shows again that there are many pseudo-abelian
varieties over an imperfect field $k$. Namely, for any ordinary
elliptic curve $A$ over $k$ which cannot be defined over
the subfield $k^p$,
every smooth connected commutative
group of exponent $p$ over $k$ occurs as the unipotent quotient
of a pseudo-abelian variety with abelian subvariety $A$,
possibly after a finite separable extension of $k$
(Corollary \ref{ordinary}). This is somewhat harder than
the analogous result for supersingular elliptic
curves, Corollary \ref{supersingular}. The analysis leads to
a conjectural computation of $\Ext^2_k(\Ga,\Gm)$ by generators
and relations (Question \ref{ext2}).

The situation is different for pseudo-abelian varieties $E$ over $k$
whose abelian subvariety is an ordinary elliptic curve
which can be defined over $k^p$. In that case, the unipotent
quotient of $E$ is very restricted, by Lemma \ref{equiv}
and Example \ref{dim2}.

\begin{lemma}
\label{nontrivial}
Let $k$ be a field of characteristic $p>0$. Let $K$ be
a commutative $k$-group scheme which is a nontrivial extension of $\Z/p$
by $\mu_p$. Let $U$ be a smooth connected commutative
$k$-group of exponent $p$. Then there is a highly
nontrivial extension of $U$ by $K$ over $k$.
\end{lemma}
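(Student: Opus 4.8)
The plan is to follow Lemma~\ref{alphap} for the formal reductions and then to isolate the new input coming from the multiplicative part $\mu_p$ of $K$. First I would reduce to $U=(\Ga)^s$. As in Lemma~\ref{alphap}, the existence of a highly nontrivial extension by $K$ passes from a smooth connected commutative unipotent group to any smooth connected $k$-subgroup (pull the extension back along the inclusion; the pullback is a closed subgroup of $H$, so its maximal smooth connected subgroup stays trivial), and every smooth connected commutative $k$-group of exponent $p$ embeds into some $(\Ga)^s$. Hence it suffices to construct, for arbitrarily large $s$, a highly nontrivial extension of $(\Ga)^s$ by $K$.

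The key preliminary is that $K(k_s)=0$. Using $\Ext^1_k(\Z/p,\mu_p)\cong k^*/(k^*)^p$, let $a\in k^*\setminus (k^*)^p$ represent the class of $K$. Since $a^{1/p}$ is purely inseparable over $k$ it does not lie in $k_s$, so $a\notin (k_s^*)^p$ and $K$ remains nontrivial over $k_s$; as $\mu_p(k_s)=1$, the injection $K(k_s)\inj \Z/p$ misses the generator (its fiber is the nontrivial $\mu_p$-torsor of class $a$), so $K(k_s)=0$. In particular, for any extension $1\arrow K\arrow H\arrow (\Ga)^s\arrow 1$, every smooth connected subgroup $S\subseteq H$ has $S\cap K\subseteq \mu_p=K^0$. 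The naive route---pushing a highly nontrivial $\mu_p$-extension out along $\mu_p\inj K$---is closed off, because $\Ext^1_k(\Ga,\Gm)=0$ (a $\Gm$-torsor over $\A^1$ is trivial, and the resulting factor system lands in $O(\Ga\times\Ga)^*=k^*$), whence $\Ext^1_k(\Ga,\mu_p)=0$ and every $\mu_p$-extension of $\Ga$ splits.

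So I would instead build $H$ from the \'etale quotient $K\surj\Z/p$. Pushing out gives a smooth $\Z/p$-extension $\overline{H}=H/\mu_p$ of $(\Ga)^s$, and $H$ is a $\mu_p$-extension of $\overline{H}$. The criterion for success is clean: $H$ is highly nontrivial as soon as $\overline{H}$ is nonsplit over every positive-dimensional smooth connected subgroup $\overline{S}\subseteq (\Ga)^s$. Indeed a smooth connected $S\subseteq H$ would map onto such an $\overline{S}$, and since $S\cap K\subseteq\mu_p$, the image of $S$ in $\overline{H}$ would be a section of $\overline{H}$ over $\overline{S}$. Because $\Ext^1_k(\Ga,\mu_p)=0$, a $\Z/p$-extension lifts to a $K$-extension exactly when its class is killed by the connecting (Yoneda) homomorphism
$$\delta\colon \Ext^1_k((\Ga)^s,\Z/p)\arrow \Ext^2_k((\Ga)^s,\mu_p)=\Ext^2_k((\Ga)^s,\Gm)[p]$$
given by product with the class of $K$. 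Thus the task becomes: find an Artin--Schreier-type class on $(\Ga)^s$ that lies in $\ker\delta$ and is nonsplit on every positive-dimensional smooth connected subgroup.

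The main obstacle is exactly this simultaneous requirement. Lifting through the fixed nontrivial $K$ constrains the class to $\ker\delta$, which forces one to understand the target $\Ext^2_k(\Ga,\mu_p)=\Ext^2_k(\Ga,\Gm)[p]$; nondegeneracy on all smooth connected subgroups (including the $k$-wound ones, over which $\Ext^1(\,\cdot\,,\mu_p)$ no longer vanishes) is a genericity condition that must be verified using the non-$p$-th-power $a$ to twist the torsor fibers. Reconciling the two is what requires the computations of $\Ext^1(\,\cdot\,,\Gm)$, $\Pic$, and $\Ext^2(\Ga,\Gm)$ developed in the paper, and it is why the ordinary case is harder than the supersingular case of Lemma~\ref{alphap} and why $\Ext^2(\Ga,\Gm)$ enters.
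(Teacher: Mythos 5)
Your reductions and your reformulation are essentially sound, and they organize the problem in a genuinely interesting way: reduce to $U=(\Ga)^s$; observe $K(k_s)=0$; note that any positive-dimensional smooth connected $S\subset H$ meets $K$ inside $\mu_p$ (this needs slightly more than $K(k_s)=0$: one should use that the only proper $k$-subgroup schemes of the nonsplit $K$ are $1$ and $\mu_p$, and that $K\not\subset S$ because such an $S$ is automatically unipotent, having no nontrivial torus); conclude that $H$ is highly nontrivial if and only if the $\Z/p$-extension $\overline{H}=H/\mu_p$ of $(\Ga)^s$ is nonsplit over every positive-dimensional smooth connected $k$-subgroup; and identify the obstruction to lifting a $\Z/p$-extension to a $K$-extension as the Yoneda product with $[K]\in\Ext^1(\Z/p,\mu_p)$, using $\Ext^1((\Ga)^s,\mu_p)=0$. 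This dovetails with the paper's remarks after the lemma, where the constructed three-step extensions are read as the relations $[t^{1/p^r},t)=0$ in $\Ext^2_k(\Ga,\Gm)$.

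The genuine gap is that you never produce the class you need, and that is the entire content of the lemma. Your argument reduces the statement to: for arbitrarily large $s$ there exists $c\in\Ext^1((\Ga)^s,\Z/p)$ with $[K]\cdot c=0$ in $\Ext^2((\Ga)^s,\mu_p)$ and with $c$ nonsplit on every positive-dimensional smooth connected subgroup --- and then defers this existence to ``the computations developed in the paper.'' That is circular: the paper's general statement about $\Ext^2_k(\Ga,\Gm)$ (Question \ref{ext2}) is a conjecture, not a theorem, and the paper does not prove Lemma \ref{nontrivial} by any $\Ext^2$ computation. It proves it by an explicit construction that simultaneously solves both of your requirements: with $t\in k^*$ representing $[K]$ and $l=k(t^{1/p})$, take $H=R_{l/k}\mu_p$ for $s=p-1$, and for larger $s$ the fiber product of $U_r=(R_{k(t^{1/p^r})/k}\Gm)/(R_{k(t^{1/p^{r-1}})/k}\Gm)$ with $R_{l/k}\mu_p$ over $(R_{l/k}\Gm)/\Gm$. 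In your language, $c$ is the class of $\overline{H}=H/\mu_p$; the lifting condition $[K]\cdot c=0$ holds by construction, but one must still check that the resulting subextension $\begin{pmatrix}\Z/p\\ \mu_p\end{pmatrix}\subset H$ is the given $K$ and not some other lift, which the paper does by tracking the point $u=t^{1/p}\in(R_{l/k}\Gm)(k)$ mapping to the generator of $\Z/p$ with $u^p=t$; and the nondegeneracy condition is verified in the strongest possible form, $H(k_s)=1$, via $\mu_p(k_s(u))=1$ and Lemma \ref{intersection}. Without some such construction --- and Weil restriction along the purely inseparable extension determined by $t$ is exactly what supplies it --- your proposal establishes an equivalent reformulation of the lemma, not the lemma itself.
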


\begin{proof}
As in the proof of Lemma \ref{alphap},
it suffices to show that there are highly nontrivial extensions
of $(\Ga)^s$ by $K$ over $k$ for some arbitrarily large numbers $s$.

We will exhibit a highly nontrivial extension
$1\arrow K\arrow H\arrow (\Ga)^{(p-1)p^{r-1}}\arrow 1$
over $k$,
for any $r\geq 1$. We are assuming that the class of $K$
in $\Ext^1(\Z/p,\mu_p)=\Ext^1(\Z/p,\Gm)=k^*/(k^*)^p$
\cite[Corollaire III.6.4.4]{DG}
is nontrivial.
(Here $\Ext$ is taken in the abelian category
of commutative $k$-group schemes of finite type.)
Let $t\in k^*$ represent this extension; then
$t$ is not a $p$th power in $k$.

For clarity, first take $r=1$. That is, we want
to construct a highly nontrivial extension
$1\arrow K\arrow H\arrow (\Ga)^{p-1}\arrow 1$.
Let $l=k(u)$ where $u=t^{1/p}$; thus $l$ is a field of degree $p$
over $k$. We will take $H$ to be the Weil restriction
$R_{l/k}\mu_p$. Like the Weil restriction
$R_{l/k}\alpha_p$ in the proof of Lemma \ref{alphap},
$R_{l/k}\mu_p$ has dimension $p-1$; explicitly, it is the
hypersurface 
$$\{ (a_0,a_1,\ldots,a_{p-1})\in \A^{p}_k: a_0^p+t
a_1^p+\cdots + t^{p-1}a_{p-1}^p=1\},$$
as we find by writing out the equation $(a_0+a_1u+\ldots
+a_{p-1}u^{p-1})^p=1$. It is straightforward to check that the natural
homomorphism
$$(R_{l/k}\mu_p)/\mu_p\arrow (R_{l/k}\Gm)/\Gm$$
is an isomorphism. The quotient group $(R_{l/k}\Gm)/\Gm$
is a smooth connected commutative group of exponent $p$,
described explicitly as the subgroup
$$U:= \{(x_0,\ldots,x_{p-1})\in (\Ga)^p: x_0^p+tx_1^p+\cdots
+t^{p-1}x_{p-1}^p=x_{p-1}\}$$
\cite[Proposition VI.5.3]{Oesterle}.
Under Oesterl\'e's isomorphism $(R_{l/k}\Gm)/\Gm\arrow U$,
the point $u=t^{1/p}$ in $(R_{l/k}\Gm)(k)=l^*$ maps to
$(0,\ldots,0,1/t)$ in $U(k)$.

The homomorphism $f\colon U\arrow (\Ga)^{p-1}$ given by
$(x_0,\ldots,x_{p-1})\mapsto (x_0,\ldots x_{p-2})$
has kernel isomorphic to $\Z/p$, generated by the point
$(0,\ldots,0,1/t)$. By counting dimensions, it follows
that $f$ is surjective and gives an isomorphism
$U/(\Z/p)\cong (\Ga)^{p-1}$. Therefore $H=R_{l/k}\mu_p$
is a three-step extension $\begin{pmatrix} (\Ga)^{p-1}\\
\Z/p\\ \mu_p \end{pmatrix}$. (The notation means that $H$ maps onto
the top group $(\Ga)^{p-1}$, the kernel maps onto the middle group
$\Z/p$, and so on.) Write $K_1$ for the subgroup
$\begin{pmatrix} \Z/p\\
\mu_p \end{pmatrix}$ in $H$. 

We want to show that $K_1$ is the nontrivial
extension classified by $t\in \Ext^1(\Z/p,\mu_p)=(k^*)/(k^*)^p$.
We can use that $\Ext^1(\Z/p,\mu_p)$ maps isomorphically
to $\Ext^1(\Z/p,\Gm)$, by the exact sequence
$$\Hom(\Z/p,\Gm)\arrow \Ext^1(\Z/p,\mu_p)\arrow \Ext^1(\Z/p,\Gm)
\stackrel{p}{\longrightarrow} \Ext^1(\Z/p,\Gm).$$
(Here $\Hom(\Z/p,\Gm)=\mu_p(k)=1$, and
multiplication by $p$ is zero on $\Ext^1(\Z/p,\Gm)$ because
the group $\Z/p$ is killed by $p$.)
The corresponding extension $W$
of $\Z/p$ by $\Gm$ is the inverse image of $\Z/p\subset U$
under the surjection $R_{l/k}\Gm\arrow U$. The extension
$1\arrow \Gm\arrow W\arrow \Z/p\arrow 1$
is classified by the element of $k^*/(k^*)^p$ which is the
$p$th power of any element of $W(k)$ that maps to $1\in \Z/p$.
As we have said, the element $u\in (R_{l/k}\Gm)(k)=l^*$ maps to
$1\in \Z/p$, and its $p$th power is $t$. So $K_1$ is the nontrivial
extension $K$ classified by $t\in k^*/(k^*)^p$, as we want.

It remains to show that $H$ is a highly nontrivial
extension of $(\Ga)^{p-1}$ by $K$. We have $H(k_s)=\mu_p(k_s(t^{1/p}))=1$,
because $k_s(t^{1/p})$ is a field of characteristic $p$.
So the maximal smooth connected $k$-subgroup of $H$ is trivial,
as we want.

We now generalize the construction. Given a nontrivial
extension $K$ of $\Z/p$ by $\mu_p$, we will
exhibit a highly nontrivial extension
$1\arrow K\arrow H\arrow (\Ga)^{(p-1)p^{r-1}}\arrow 1$
over $k$
for any $r\geq 1$. Again, let $t\in k^*$ represent
the class of $K$ in $\Ext^1(\Z/p,\mu_p)\cong (k^*)/(k^*)^p$.

Let $u=t^{1/p}$, $v=t^{1/p^{r-1}}$, and $w=t^{1/p^r}$.
Our extension $U_r=
\begin{pmatrix} (\Ga)^{(p-1)p^{r-1}}\\
\Z/p \end{pmatrix}$ will be
\begin{align*}
U_r& :=(R_{k(w)/k}\Gm)/(R_{k(v)/k}\Gm)\\
&=R_{k(v)/k}((R_{k(w)/k(v)}\Gm)/\Gm).
\end{align*}
The second description shows
that $U_r$ is a smooth connected commutative $k$-group
of exponent $p$ and dimension $(p-1)p^{r-1}$.
(Indeed, $(R_{k(w)/k(v)}\Gm)/\Gm$ is essentially
the $(p-1)$-dimensional unipotent group considered above, but
over $k(v)$ instead of $k$.) This description gives
equations for $U_r$:
$$U_r\cong \{(x_0,\ldots,x_{p-1})\in (R_{k(v)/k}\Ga)^p:
x_0^p+vx_1^p+\cdots+v^{p-1}x_{p-1}^p=x_{p-1} \}.$$

Define a homomorphism $f\colon U_r\arrow (\Ga)^{(p-1)p^{r-1}}$ over $k$ by
$(x_0,\ldots,x_{p-1})\mapsto (x_0,\ldots,x_{p-2})$. (Here
each $x_i$ is in $R_{k(v)/k}\Ga\cong (\Ga)^{p^{r-1}}$.) The kernel
of $f$ is the $k$-subgroup $\Z/p$ of $U_r$
generated by $(x_0,\ldots,x_{p-1})=
(0,\ldots,0,1/v)$. We noted in the case $r=1$ that
the isomorphism $(R_{l/k}\Gm)/\Gm\arrow U$
sends the point $u=t^{1/p}$ in $(R_{l/k}\Gm)(k)=l^*$ to
$(0,\ldots,0,1/t)$ in $U(k)$. As a result, 
the point $(0,\ldots,0,1/v)$ in $U_r(k)$ is the image
of the point $w$ in $(R_{k(w)/k}\Gm)(k)=k(w)^*$ under the identification
$U_r=(R_{k(w)/k}\Gm)/(R_{k(v)/k}\Gm)$.
By counting dimensions, $f$ is surjective, and so
$U$ is an extension $\begin{pmatrix} (\Ga)^{(p-1)p^{r-1}}\\
\Z/p \end{pmatrix}$.

The extension of $U_r$ by $\Gm$ we consider is the fiber product
\begin{align*}
E & :=[(R_{k(w)/k}\Gm)/(R_{k(v)/k}\Gm)]\times_{(R_{k(u)/k}\Gm)/\Gm}
R_{k(u)/k}\Gm\\
& = U_r \times_{(R_{k(u)/k}\Gm)/\Gm}
R_{k(u)/k}\Gm,
\end{align*}
where the homomorphism $(R_{k(w)/k}\Gm)/(R_{k(v)/k}\Gm)\arrow
(R_{k(u)/k}\Gm)/\Gm$ corresponds on $k$-rational
points to taking the $p^{r-1}$st power. This extension
comes from an extension $H$ of $U_r$ by $\mu_p$,
$$H:=U_r\times_{(R_{k(u)/k}\Gm)/\Gm}
R_{k(u)/k}\mu_p,$$
since $(R_{k(u)/k}\mu_p)/\mu_p$ is isomorphic to
$(R_{k(u)/k}\Gm)/\Gm$. Thus $H$ is a three-step extension
$\begin{pmatrix} (\Ga)^{(p-1)p^{r-1}}\\
\Z/p\\ \mu_p \end{pmatrix}$.

Let $K_1$ be the subgroup $\begin{pmatrix} \Z/p\\ \mu_p \end{pmatrix}$
in $H$. We want to show that $K_1$ is the nontrivial extension
of $\Z/p$ by $\mu_p$ corresponding to $t\in k^*/(k^*)^p=
\Ext^1(\Z/p,\mu_p)$. It is equivalent to show that the inverse
image $L_1$ of $\Z/p\subset U_r$ in $E$ is the extension of $\Z/p$ by $\Gm$
corresponding to $t\in k^*/(k^*)^p=\Ext^1(\Z/p,\Gm)$. As we have
computed, $L_1$ contains the $k$-rational point $w$
in $(R_{k(w)/k}\Gm)/(R_{k(v)/k}\Gm)$. The image of $w$
under the ``$p^{r-1}$st power homomorphism'' to $(R_{k(u)/k}\Gm)/\Gm$
is clearly the image of $u$ in $(R_{k(u)/k}\Gm)(k)=k(u)^*$.
The $p$th power of $u$ in $L_1$ is the point $t\in \Gm(k)=k^*$,
which shows that the class of the extension $L_1$ is
$t\in k^*/(k^*)^p$. So $K_1$ is isomorphic to the extension $K$
of $\Z/p$ by $\mu_p$ classified by $t$, as we want.

It remains to show that
the extension $1\arrow K\arrow H\arrow (\Ga)^{p^{r-1}
(p-1)}\arrow 1$ is highly nontrivial. We will prove the stronger
statement that $H(k_s)=1$. By definition of $H$,
$H(k_s)$ is the fiber product
$$H(k_s)=[k_s(w)^*/k_s(v)^*]\times_{k_s(u)^*/(k_s)^*} \mu_p(k_s(u)),$$
where the left homomorphism is the $p^{r-1}$st power.
Since $k_s(u)$ is a field of characteristic $p$, $\mu_p(k_s(u))=1$.
So $H(k_s)=\{y\in k_s(w)^*/k_s(v)^*: y^{p^{r-1}}\in (k_s)^*\}$.
We have $H(k_s)=1$ because $k_s(w)\cap (k_s)^{1/p^{r-1}}
=k_s(v)$ (Lemma \ref{intersection}). Thus
the extension $1\arrow K\arrow H\arrow (\Ga)^{p^{r-1}(p-1)}
\arrow 1$ is highly nontrivial.
\end{proof}

The following lemma is a variant of \cite[Proposition 12.2.7]{KM}.

\begin{lemma}
\label{splitting}
Let $A$ be an ordinary elliptic curve over a field $k$ of characteristic
$p>0$. Then the $p$-torsion subgroup scheme $A[p]$ is an extension
of a $k$-form of $\Z/p$ by a $k$-form of $\mu_p$.
The elliptic curve $A$ can be defined over the subfield $k^p$ if and only if
this extension is split.
\end{lemma}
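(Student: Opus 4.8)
The plan is to first establish the structural claim about $A[p]$, then characterize descent to $k^p$ via the splitting. For the first part, since $A$ is ordinary, the geometric $p$-torsion $A[p](\overline{k})\cong \Z/p$, so $A[p]$ has order $p^2$ as a group scheme. The connected-\'etale sequence of the finite commutative $k$-group scheme $A[p]$ gives an exact sequence $1\arrow A[p]^0\arrow A[p]\arrow A[p]^{\text{\'et}}\arrow 1$, where the connected part $A[p]^0$ and the \'etale quotient $A[p]^{\text{\'et}}$ each have order $p$. The \'etale quotient is a $k$-form of $\Z/p$ since it becomes $\Z/p$ over $k_s$. For the connected part, I would use Cartier duality: $A[p]$ is self-dual (via the Weil pairing, or the principal polarization of the elliptic curve), so the Cartier dual of the \'etale quotient $A[p]^{\text{\'et}}$ is a $k$-form of $\mu_p$ and sits inside $A[p]^0$ by order count; hence $A[p]^0$ is a $k$-form of $\mu_p$. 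This realizes $A[p]$ as an extension of a $k$-form of $\Z/p$ by a $k$-form of $\mu_p$, as claimed.

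For the second and main part, the strategy is to relate the splitting of this extension to the Frobenius/Verschiebung structure on $A$. The Frobenius isogeny $F\colon A\arrow A^{(p)}$ has kernel $A[F]=A[p]^0$ (the connected $k$-form of $\mu_p$), and Verschiebung $V\colon A^{(p)}\arrow A$ satisfies $V\circ F=[p]$, with $\ker(V)$ mapping isomorphically to the \'etale quotient $A[p]^{\text{\'et}}$. The key point is that $A$ descends to $k^p$ precisely when the relative Frobenius admits a compatible splitting: I would exploit that $A^{(p)}$ is the base change of $A$ along the absolute Frobenius $k\arrow k$, and that descent of $A$ to $k^p$ is equivalent to producing an isomorphism $A\cong A^{(p)}$ that identifies the $p$-structures in a way compatible with Verschiebung. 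Concretely, a splitting of $1\arrow A[p]^0\arrow A[p]\arrow A[p]^{\text{\'et}}\arrow 1$ is a $k$-subgroup-scheme copy of the \'etale $k$-form of $\Z/p$ inside $A[p]$; such a subgroup is exactly a lift of $\ker(V)\subset A^{(p)}$ back into $A$, which provides the descent datum splitting $F$.

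The hard part will be making the equivalence ``$A$ defined over $k^p$ $\iff$ extension split'' precise, since both directions require carefully tracking how the arithmetic of the forms interacts with Frobenius twisting. In one direction, if $A=A_0\times_{k^p}k$ for some elliptic curve $A_0$ over $k^p$, then $A^{(p)}$ is canonically identified with $A$ in a way that trivializes the relevant extension, yielding a splitting. For the converse, a splitting gives an \'etale $k$-subgroup of $A[p]$ complementary to $A[p]^0=\ker F$; quotienting $A$ by this \'etale subgroup and comparing with the quotient by $\ker F$ (which is $A^{(p)}$) should produce the required descent. I would model this argument on the cited \cite[Proposition 12.2.7]{KM}, adapting their analysis of ordinary elliptic curves and the canonical subgroup to the present setting over an imperfect base; the main technical care is ensuring all identifications are $k$-rational and respect the Cartier-duality self-pairing rather than merely holding over $k_s$.
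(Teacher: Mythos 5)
Your first part (connected-\'etale sequence, identification of the connected piece via Cartier self-duality) is fine and matches the paper's setup, as does your general Frobenius--Verschiebung framework. But the pivot of your second part contains a genuine error. You assert that descent of $A$ to $k^p$ is equivalent to an isomorphism $A\cong A^{(p)}$, and correspondingly that if $A=A_0\times_{k^p}k$ then ``$A^{(p)}$ is canonically identified with $A$.'' With your convention ($A^{(p)}$ = base change of $A$ along the absolute Frobenius of $k$) this is false: one has $j(A^{(p)})=j(A)^p$, so for $k=\F_p(t)$ and $A$ the base change of an ordinary curve over $k^p$ with $j$-invariant $t^p$, we get $j(A^{(p)})=t^{p^2}\neq t^p=j(A)$, hence $A\not\cong A^{(p)}$ even though $A$ is defined over $k^p$. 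The correct criterion, which is what the paper uses, is that $A$ can be defined over $k^p$ if and only if $A\cong B^{(p)}$ for \emph{some} elliptic curve $B$ over $k$: descent means $A$ is itself a Frobenius twist, not that $A$ is Frobenius-fixed. This is not a cosmetic point, because the \'etale subgroup that splits the extension must live inside $A$, whereas the Verschiebung kernel $\ker(V_A)$ lives inside $A^{(p)}$; writing $A=B^{(p)}$ is exactly what places a Verschiebung kernel, namely $\ker(V_B)\subset B^{(p)}=A$, inside $A$ and yields the splitting in the forward direction.

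For the converse, your instinct to quotient by the splitting subgroup $C\subset A$ does point at the right curve, but the mechanism is missing and cannot be completed with your criterion. The paper's argument runs: twist the inclusion to get $C^{(p)}\subset A^{(p)}$; since an ordinary elliptic curve has a \emph{unique} \'etale subgroup scheme of order $p$ in its $p$-torsion, $C^{(p)}$ must equal $\ker(V_A)$; hence Verschiebung induces an isomorphism $A^{(p)}/C^{(p)}\arrow A$, exhibiting $A\cong (A/C)^{(p)}$ as a Frobenius twist of $B=A/C$ and therefore as defined over $k^p$ (the paper makes this last step automatic by keeping $A^{(p)}$ and $C^{(p)}$ literally defined over $k^p$, via the isomorphism $k\cong k^p$, $x\mapsto x^p$). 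As written, with the ``$A\cong A^{(p)}$'' formulation, neither direction of your equivalence closes.
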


\begin{proof}
The first statement is clear from the fact that $A_{\overline{k}}[p]$
is isomorphic to $\mu_p\times \Z/p$.

Write $G^{(p)}$ for the group scheme
over $k^p$ which is associated to a $k$-group scheme $G$ via the isomorphism
$k\stackrel{\cong}{\longrightarrow} k^p$,
$x\mapsto x^p$. Then the relative Frobenius for $A$ is a homomorphism
$F\colon A\arrow (A^{(p)})_k.$
Define the Verschiebung $V\colon (A^{(p)})_k\arrow A$ to be the dual isogeny.
Since $VF=p$, where $\ker(F)\subset A$ is a $k$-form of $\mu_p$,
$\ker(V)\subset (A^{(p)})_k$ must be a $k$-form of $\Z/p$.

If an ordinary elliptic curve $A$ over $k$ can be defined over $k^p$,
then it can be written as $(B^{(p)})_k$ for some elliptic curve $B$ over $k$.
Then $\ker(V)\subset (B^{(p)})_k=A$ is a $k$-form of $\Z/p$. That subgroup
gives
a splitting of the extension $1\arrow \ker(F)\arrow A[p]\arrow A[p]/\ker(F)
\arrow 1$, as we want.

Conversely, let $A$ be an ordinary elliptic curve over $k$
such that that the extension $1\arrow \ker(F)\arrow A[p]\arrow C\arrow 1$
is split, where $\ker(F)$ is a $k$-form of $\mu_p$ and $C$
is a $k$-form of $\Z/p$. A splitting gives an etale $k$-subgroup $C\subset A$
of order $p$. This gives an etale $k$-subgroup $(C^{(p)})_k\subset (A^{(p)})_k$
of order $p$. But the kernel of the Verschiebung $V\colon (A^{(p)})_k\arrow A$
is also an etale $k$-subgroup of order $p$. By our knowledge of
the $p$-torsion of an ordinary elliptic curve,
it follows that $\ker(V)=(C^{(p)})_k\subset (A^{(p)})_k$.
Therefore $V$ gives an isomorphism 
$(A^{(p)}/C^{(p)})_k\stackrel{\cong}{\longrightarrow} A$. So
$A$ comes from the elliptic curve $A^{(p)}/C^{(p)}$ over $k^p$.
\end{proof}

\begin{corollary}
\label{ordinary}
Let $A$ be an ordinary elliptic curve over a field $k$
of characteristic $p$ which cannot be defined over the subfield $k^p$.
Suppose that the connected component of the identity in the $p$-torsion
subgroup scheme $A[p]$ is isomorphic to $\mu_p$ over $k$. (That always
holds after replacing $k$ by some extension field of degree dividing $p-1$.)
Then, for every smooth connected commutative group $U$
of exponent $p$ over $k$, there is a pseudo-abelian variety
which is an extension of $U$ by $A$.
\end{corollary}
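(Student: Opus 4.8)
The plan is to realize the desired pseudo-abelian variety as $E=(A\times H)/A[p]$, applying Lemma \ref{highly}(1) with $K=A[p]$. For this I need two inputs: that $A$ contains $K=A[p]$ as a subgroup scheme, which is automatic, and that there is a highly nontrivial extension $1\arrow A[p]\arrow H\arrow U\arrow 1$. The latter will come from Lemma \ref{nontrivial}, whose only hypothesis is that $K=A[p]$ is a \emph{nontrivial} extension of $\Z/p$ by $\mu_p$. So the whole argument reduces to checking that $A[p]$ is exactly such an extension.

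First I would analyze $A[p]$ via the connected-étale sequence $1\arrow A[p]^0\arrow A[p]\arrow C\arrow 1$, where, for $A$ ordinary, the connected part $A[p]^0=\ker(F)$ is a $k$-form of $\mu_p$ and the étale quotient $C$ is a $k$-form of $\Z/p$, as in Lemma \ref{splitting}. The hypothesis gives $A[p]^0\cong \mu_p$ over $k$ outright. To pin down $C$ as the constant group $\Z/p$ rather than a nontrivial form, I would use the Weil pairing $e\colon A[p]\times A[p]\arrow \mu_p$, which is perfect and alternating. Since $A[p]^0$ has order $p$, the alternating pairing restricts trivially to it, so $A[p]^0$ is totally isotropic and, by perfectness together with a count of orders, equals its own annihilator. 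Hence $e$ descends to a perfect pairing $A[p]^0\times C\arrow \mu_p$, exhibiting $C$ as the Cartier dual of $A[p]^0\cong \mu_p$, that is, $C\cong \Z/p$. Thus $A[p]$ is genuinely an extension of $\Z/p$ by $\mu_p$, and it is nontrivial because, by Lemma \ref{splitting}, a splitting would allow $A$ to be defined over $k^p$, contrary to hypothesis.

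With this in hand, Lemma \ref{nontrivial} produces a highly nontrivial extension $1\arrow A[p]\arrow H\arrow U\arrow 1$. Since $K=A[p]$ is a subgroup scheme of the abelian variety $A$, Lemma \ref{highly}(1) applies and shows that $E:=(A\times H)/A[p]$ is a pseudo-abelian variety fitting into $1\arrow A\arrow E\arrow U\arrow 1$, which is precisely an extension of $U$ by $A$ with abelian subvariety $A$, as required.

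Finally, for the parenthetical claim, I would note that $A[p]^0$ is a $k$-form of $\mu_p$ classified by a continuous character $\Gal(k_s/k)\arrow \Aut(\mu_p)=(\Z/p)^{*}$, whose image has order dividing $p-1$; over the fixed field $k'$ of its kernel, a separable extension with $[k'\colon k]$ dividing $p-1$, one has $A[p]^0\cong\mu_p$. The one point needing care is that passing to $k'$ preserves the hypothesis that $A$ is not definable over $(k')^p$: the relevant extension class lives in a $p$-torsion group, since $C$ is killed by $p$, while $[k'\colon k]$ is prime to $p$, so a restriction-corestriction (transfer) argument shows the class cannot become trivial over $k'$ unless it was already trivial over $k$. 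I expect the main obstacle in the whole argument to be exactly this matching of hypotheses with Lemma \ref{nontrivial}: not the nontriviality, which is immediate from Lemma \ref{splitting}, but the identification of the étale quotient $C$ as the constant group $\Z/p$, which is what the Weil-pairing self-duality of $A[p]$ supplies.
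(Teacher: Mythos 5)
Your proposal is correct and follows essentially the same route as the paper's proof: Lemma \ref{splitting} gives nontriviality of the extension $1\arrow\ker(F)\arrow A[p]\arrow C\arrow 1$, the Weil pairing identifies $C$ with $\Z/p$ once $\ker(F)\cong\mu_p$, and then Lemma \ref{nontrivial} plus Lemma \ref{highly}(1) yield $E=(A\times H)/A[p]$. The only differences are that you spell out the Weil-pairing duality argument (which the paper cites to Katz--Mazur) and supply a restriction--corestriction justification for the parenthetical claim, which the paper merely asserts; both additions are sound.
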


\begin{proof}
Since $A$ cannot be defined over $k^p$, the $p$-torsion subgroup
scheme of $A$ is a nontrivial extension $1\arrow \ker(F)
\arrow A[p]\arrow C\arrow 1$ over $k$, by Lemma \ref{splitting}.
Since we assume that $\ker(F)$ is isomorphic to $\mu_p$ over $k$,
the quotient group $C$ is isomorphic to $\Z/p$ over $k$, by the Weil pairing
\cite[section 2.8.2]{KM}.
By Lemma \ref{nontrivial},
there is a highly nontrivial extension $H$ of $U$ by $A[p]$ over $k$.
By Lemma \ref{highly}(1), $E=(A\times H)/A[p]$
is a pseudo-abelian variety over $k$. It is an extension
of $U$ by $A$.
\end{proof}

The proof of Lemma \ref{nontrivial} suggests the following
question about Ext groups in the abelian category of fppf sheaves
over a field $k$ of characteristic $p$, as studied by Breen
\cite{BreenBull, BreenENS}.
There are natural isomorphisms
$\Ext^1_k(\Ga,\Z/p)\cong k^{\perf}=k^{1/p^{\infty}}$
\cite[Proposition III.6.5.4]{DG}
and $\Ext^1_k(\Z/p,\Gm)\cong (k^*)/(k^*)^p$
\cite[Corollaire III.6.4.4]{DG}.
So we have a product map
$$[\cdot,\cdot )\colon k^{\perf}\otimes_{\Z} k^* \arrow \Ext^2_k(\Ga,\Gm).$$
The product
of an element of $\Ext^1_k(\Ga,\Z/p)$
with an element
of $\Ext^1_k(\Z/p,\Gm)$ is zero in $\Ext^2_k(\Ga,\Gm)$
if and only if there is
a three-step extension $\begin{pmatrix} \Ga\\
\Z/p\\ \Gm \end{pmatrix}$ such that the extensions
$\begin{pmatrix}\Ga\\ \Z/p\end{pmatrix}$
and $\begin{pmatrix}\Z/p\\ \Gm \end{pmatrix}$ are the given ones.
This follows from the neat
description of three-step extensions in any abelian category
by Grothendieck \cite[Proposition IX.9.3.8]{SGA7}. The three-step
extensions constructed in the proof of Lemma \ref{nontrivial}
imply the relation $[t^{1/p^r},t)=0$ in $\Ext^2_k(\Ga,\Gm)$
for all $t$ in $k^*$ and all $r\geq 1$. We can also check
that $[s+t,s+t)=[s,s)+[t,t)$ in $\Ext^2_k(\Ga,\Gm)$
for all $s,t$ in $k$ with $s,t,s+t\neq 0$, for example
using the relation to Brauer groups discussed below. So we have
a homomorphism
\begin{multline*}
\varphi\colon  k^{\perf}\otimes_{\Z}k^*/\big( [t^{1/p^r},t)=0 \text{ for
all }t\in k^*\text{ and all }r\geq 1, [s+t,s+t)=[s,s)+[t,t)\big) \\
\arrow \Ext^2_k(\Ga,\Gm).
\end{multline*}

\begin{question}
\label{ext2}
Is $\varphi$ an isomorphism, for every field $k$
of characteristic $p$?
\end{question}

\begin{remark}
If Question \ref{ext2} has a positive answer (about $\Ext^2_k
(\Ga,\Gm)$ in the abelian category
of fppf sheaves), then the same formula holds for Yoneda Ext in the
abelian category
of commutative affine $k$-group schemes of finite type.
The point is that we have natural
maps $\Ext^i_{k-\text{group}}(G,H)\arrow \Ext^i_{k}(G,H)$ for commutative
affine $k$-group schemes $G$ and $H$. These maps are isomorphisms for
$i\leq 1$ \cite[Proposition III.4.1.9]{DG}
and therefore injective for $i=2$. (They are not always surjective
for $i=2$, by Breen \cite{BreenBull}.) The product map above lands
in $\Ext^2_{k-\text{group}}(\Ga,\Gm)$. So if the map $\varphi$ to
$\Ext^2_k(\Ga,\Gm)$ is an isomorphism, then the product
map to $\Ext^2_{k-\text{group}}(\Ga,\Gm)$ is also an isomorphism.
\end{remark}

Question \ref{ext2} would be a very natural calculation. By the discussion
of three-step extensions, the group $\Ext^2_k(\Ga,\Gm)$ comes
up in trying to classify the commutative group schemes
over $k$. (One also encounters the group $\Ext^2_k(\Ga,\mu_p)$,
which is isomorphic to $\Ext^2_k(\Ga,\Gm)$, since
$\Ext^1_k(\Ga,\Gm)=0$ \cite[Th\'eor\`eme XVII.6.1.1]{SGA3}.)
Question \ref{ext2} somewhat resembles the Milnor
conjecture, or more specifically Kato's description of the $p$-torsion
in the Brauer group of a field $k$ of characteristic $p$:
$$\Br(k)[p]\cong k\otimes_{\Z}k^*/([t,t)=0 \text{ for all }t\in k^*,
[s^p,t)=[s,t)\text{ for all }s\in k, t\in k^*)$$
\cite[Lemma 16, p.~674]{Kato}. (There is a similar presentation
of $\Br(k)[p]$ by Witt \cite{Witt}.)
The analogy is explained by Breen's spectral sequence
\cite{BreenBull} (see the proof of Lemma \ref{ext} below),
which gives an isomorphism
$$\Ext^2_k(\Ga,\Gm)\cong \ker(\alpha\colon
\Br(\A^1_k)[p]\arrow \Br(\A^2_k)[p]).$$
Here $\alpha=m^*-\pi_1^*-\pi_2^*$, where $m,\pi_1,\pi_2$ are the morphisms
$\A^2_k\arrow \A^1_k$ which send $(x,y)$ to $x+y,x,y$, respectively.
(This isomorphism sends a symbol $[a^{1/p^r},b)$ in
$\Ext^2_k(\Ga,\Gm)$ to $[ax^{p^r},b)$ in $\Br(\A^1_k)[p]\subset \Br(k(x))[p]$,
for $a\in k$, $r\geq 0$, and $b\in k^*$.)

\section{Pseudo-abelian varieties and commutative pseudo-reductive groups}

In this section, we consider the problem of classifying
pseudo-abelian varieties $E$ over a field $k$ whose
abelian subvariety is an ordinary elliptic curve
which can be defined over $k^p$, the case not considered
in Corollary \ref{ordinary}. This case is very different:
the possible unipotent quotient groups of $E$ are highly restricted.
Lemma \ref{equiv} shows that the possible unipotent quotient
groups in this case are essentially the same as the possible unipotent
quotient groups of commutative pseudo-reductive groups.
Section \ref{pr} gives positive and negative results
about the possible unipotent quotient groups 
of commutative pseudo-reductive groups.

\begin{lemma}
\label{equiv}
Let $A$ be an ordinary elliptic curve over a field $k$
of characteristic $p$. Suppose that $A$ can be defined
over the subfield $k^p$. Suppose that the subgroup
scheme $\ker(F)\subset A$ is isomorphic to $\mu_p$ over $k$,
as can always be arranged after replacing $k$ by a field
extension of degree dividing $p-1$. For a smooth connected
commutative $k$-group $U$ of exponent $p$, the following are
equivalent.

(1) There is a pseudo-abelian variety $E$ which is an extension
$1\arrow A\arrow E\arrow U\arrow 1$ over $k$.

(2) There is a highly nontrivial extension $1\arrow \mu_p
\arrow H\arrow U\arrow 1$ over $k$.

(3) There is a commutative pseudo-reductive group $G$
which is an extension $1\arrow \Gm\arrow G\arrow U\arrow 1$.
\end{lemma}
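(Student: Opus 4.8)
The plan is to treat statement (2) as the hub and prove the two equivalences $(1)\Leftrightarrow(2)$ and $(2)\Leftrightarrow(3)$ separately. The unifying observation is that, because everything in sight is commutative, both ``highly nontrivial'' and ``pseudo-reductive'' are conditions on the maximal smooth connected $k$-subgroup (no smooth connected subgroup at all in the first case, no smooth connected \emph{unipotent} subgroup in the second); this is what lets the two notions communicate across the sequence $1\arrow\mu_p\arrow\Gm\stackrel{p}{\arrow}\Gm\arrow 1$.

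For $(2)\imp(1)$ I would invoke Lemma \ref{highly}(1) directly: since $A$ is ordinary with $\ker(F)\cong\mu_p$, the abelian variety $A$ contains $\mu_p$ as a subgroup scheme, so a highly nontrivial extension $1\arrow\mu_p\arrow H\arrow U\arrow 1$ produces the pseudo-abelian variety $E=(A\times H)/\mu_p$, an extension of $U$ by $A$. For $(1)\imp(2)$, since $U$ has exponent $p$, Lemma \ref{highly}(2) writes $E=(A\times H')/A[p]$ for a highly nontrivial extension $1\arrow A[p]\arrow H'\arrow U\arrow 1$. Because $A$ is defined over $k^p$ and $\ker(F)\cong\mu_p$, Lemma \ref{splitting} gives a splitting $A[p]\cong\mu_p\times\Z/p$. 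I would then set $H:=H'/(\Z/p)$, an extension of $U$ by $\mu_p$, and check it is still highly nontrivial: given a smooth connected $k$-subgroup $N\subset H$, its preimage $\tilde N\subset H'$ is an extension of $N$ by the \emph{étale} group $\Z/p$ and is therefore smooth, so its identity component $\tilde N^0$ is a smooth connected $k$-subgroup of $H'$ surjecting onto $N$; high nontriviality of $H'$ forces $\tilde N^0=1$ and hence $N=1$. This is the step I expect to be the main obstacle, and the crux is that one must quotient by the étale part $\Z/p$ and not by $\mu_p$: only étaleness of $\Z/p$ keeps $\tilde N$ smooth and permits passage to the identity component, whereas quotienting by the infinitesimal $\mu_p$ could in principle manufacture a positive-dimensional group with trivial maximal smooth subgroup. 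Thus the splitting of $A[p]$ is used in an essential way.

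For $(2)\Leftrightarrow(3)$ I would use pushout and pullback along $\mu_p\inj\Gm$. Given (2), push out $1\arrow\mu_p\arrow H\arrow U\arrow 1$ along $\mu_p\inj\Gm$ to get $G=(\Gm\times H)/\mu_p$, an extension $1\arrow\Gm\arrow G\arrow U\arrow 1$ that is smooth, connected, and affine (being an extension of the smooth affine $U$ by $\Gm$), and which satisfies $G/H\cong\Gm$. To verify pseudo-reductivity, let $N\subset G$ be smooth connected unipotent; then $N$ maps trivially to $G/H\cong\Gm$, so $N\subset H$, whence $N=1$ by high nontriviality of $H$. Conversely, given (3), the group $\Ext^1(U,\Gm)$ is killed by $p$ since $U$ has exponent $p$, so the long exact sequence attached to $\Gm\stackrel{p}{\arrow}\Gm$ shows that the class of $G$ lifts to $\Ext^1(U,\mu_p)$, producing $H\subset G$ with $1\arrow\mu_p\arrow H\arrow U\arrow 1$ and $G=(\Gm\times H)/\mu_p$. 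That $H$ is highly nontrivial follows because any smooth connected $k$-subgroup $N\subset H$ is affine with trivial maximal torus—a torus $T\subset N$ would meet $\Gm$ in a finite group and map to a torus in the unipotent quotient $U$, hence be trivial—so $N$ is unipotent by \cite[Proposition XVII.4.1.1]{SGA3}, and pseudo-reductivity of $G$ forces $N=1$. Assembling the four implications gives $(1)\Leftrightarrow(2)\Leftrightarrow(3)$.
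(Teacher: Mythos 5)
Your proposal is correct and takes essentially the same approach as the paper: statement (2) serves as the hub, with (2)$\imp$(1) and (2)$\imp$(3) proved via the pushout argument of Lemma \ref{highly}(1), (1)$\imp$(2) via Lemma \ref{highly}(2) together with Lemma \ref{splitting} and passage to the quotient by the \'etale factor of $A[p]$ (including the same key point that one must kill the \'etale part, not $\mu_p$, so that preimages of smooth connected subgroups stay smooth), and (3)$\imp$(2) via the Ext exact sequence exactly as in the proof of Lemma \ref{highly}(2). The only difference is cosmetic: the paper additionally invokes the Weil pairing to identify the \'etale factor of $A[p]$ with $\Z/p$ on the nose, whereas your argument correctly observes that only its \'etaleness is needed.
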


These three equivalent properties fail for some smooth connected
commutative groups $U$ of exponent $p$. See section
\ref{pr} for positive and negative results. 
Note that there exist ordinary elliptic curves over any field $k$
of characteristic $p$
with $\ker(F)$ isomorphic to $\mu_p$; it suffices to
apply Honda-Tate theory to produce an elliptic curve over $\F_p$
whose Frobenius eigenvalues are the Weil $p$-numbers
$(-1 \pm \sqrt{1-4p})/2$ \cite{TateHonda}.

\begin{proof}
Assume (2). The obvious inclusions $\mu_p\arrow A$ and
$\mu_p\arrow \Gm$ give commutative extensions of $U$ by $A$,
and of $U$ by $\Gm$. The extension of $U$ by $A$ is 
a pseudo-abelian variety by Lemma \ref{highly}(1), giving (1). The
proof of Lemma \ref{highly}(1) also works to show
that the extension $E$ of $U$ by $\Gm$ is pseudo-reductive.
(Given the extension $1\arrow \mu_p\arrow H\arrow U\arrow 1$,
we have $E=(\Gm\times H)/\mu_p$. Any smooth connected unipotent
$k$-subgroup $N$ of $E$ maps trivially into $\Gm/\mu_p\cong \Gm$,
and hence is contained in $H\subset E$. Since $H$ is a highly
nontrivial extension, $N$ is trivial.) That proves (3).

Conversely, if (1) holds, then Lemma \ref{highly}(2) shows that
the extension $1\arrow A\arrow E\arrow U\arrow 1$
comes from a highly nontrivial extension
$1\arrow A[p]\arrow L\arrow U\arrow 1$
over $k$. We are assuming that $\ker(F)\subset A[p]$ is isomorphic
to $\mu_p$ over $k$. By the Weil pairing, it follows that
$A[p]/\ker(F)$ is isomorphic to $\Z/p$ over $k$
\cite[section 2.8.2]{KM}. Since $A$ can be defined
over the subfield $k^p$, Lemma \ref{splitting} shows
that $A[p]$ is isomorphic to $\mu_p\times \Z/p$
over $k$. So $L/(\Z/p)$ is an extension
$1\arrow \mu_p\arrow L/(\Z/p)\arrow U\arrow 1$. Any smooth
connected $k$-subgroup of $L/(\Z/p)$ must be trivial; otherwise
its inverse image in $L$ would be a smooth $k$-group of positive
dimension, contradicting that $L$ is a highly nontrivial extension.
So $L/(\Z/p)$ is a highly nontrivial extension of $U$ by $\mu_p$,
and (2) is proved.
Finally, if (3) holds, then the same proof
as for Lemma \ref{highly}(2) shows that $G$ comes from a highly
nontrivial extension of $U$ by $\mu_p$. That is, (2) holds.
\end{proof}

\section{Commutative pseudo-reductive groups}
\label{pr}

Conrad-Gabber-Prasad have largely reduced the classification
of pseudo-reductive groups over a field $k$ to the case of commutative
pseudo-reductive groups, which seems intractable \cite[Introduction]{CGP}.
This section gives a rough classification
of the commutative pseudo-reductive groups of dimension 2
(Corollary \ref{cordim1}) as well as examples showing
the greater complexity of the problem in higher dimensions.

A commutative pseudo-reductive group over $k$ is an extension of a
smooth connected commutative unipotent group by a torus.
So the main question is which unipotent quotient groups can occur.
This is closely related to the question of which unipotent
quotient groups can occur for
certain pseudo-abelian varieties over $k$, for example those
whose abelian subvariety is an ordinary elliptic curve
which can be defined over $k^p$, by Lemma \ref{equiv}.

For any field $k$, $\Ext^1(\Ga,\Gm)=0$ in the abelian category
of commutative $k$-group schemes. It follows that
the unipotent quotient $U$ of a commutative
pseudo-reductive group must be $k$-wound;
that is, $U$ does not contain the additive group $\Ga$ as a $k$-subgroup.
One main result of this section is that every
$k$-wound group of dimension 1
is the unipotent quotient of some commutative pseudo-reductive
group $E$ over $k$ (Corollary \ref{cordim1}). For $k$ separably
closed, we can take $E$ to have dimension 2.
(For a smooth connected unipotent group of dimension 1 over $k$,
``$k$-wound'' just means ``not isomorphic
to $\Ga$ over $k$''.)
There are many smooth connected unipotent groups of dimension 1
over an imperfect field, and so this result makes precise the idea
that the class of commutative pseudo-reductive groups is big.
Corollary \ref{cordim1} also gives that
for every ordinary elliptic curve $A$ over 
a separably closed field $k$,
every $k$-wound group of dimension 1 occurs as the unipotent
quotient of a pseudo-abelian variety with abelian subvariety $A$.

On the other hand, we give some counterexamples. First, for
$k$ not separably closed, a $k$-wound group of dimension 1 need
not have any pseudo-reductive extension by $\Gm$ over $k$
(Example \ref{dim1ex}).
Conrad-Gabber-Prasad gave such an example in characteristic 3
\cite[equation 11.3.1]{CGP}, and we check the required property
in any characteristic at least 3.

Next, we exhibit a commutative $k$-wound group
of dimension 2 over a separably closed field $k$
which is not the unipotent quotient
of any commutative pseudo-reductive group (Example \ref{dim2}).
Finally, we exhibit a commutative $k$-wound group
over a separably closed field $k$ with $[k\colon k^p]=p$
which has no pseudo-reductive extension by $\Gm$ over $k$, although
it does have a pseudo-reductive extension by $(\Gm)^2$
(Example \ref{UU}). Question \ref{dimone} asks whether,
for a field $k$
with $[k\colon k^p]=p$, every commutative $k$-wound group
is the unipotent quotient of some pseudo-reductive group
over $k$. 

We now begin the proofs of these results. First we have a reduction
of the problem to the case of a separably closed field.

\begin{lemma}
\label{sep}
Let $U$ be a smooth connected commutative unipotent group
over a field $k$. Then $U$ is the unipotent quotient
of some commutative pseudo-reductive group over $k$
if and only if $U_{k_s}$ is the unipotent quotient of some
commutative pseudo-reductive group over the separable
closure $k_s$.
\end{lemma}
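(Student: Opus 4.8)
The plan is to treat the two implications separately: one is immediate base change, the other is a Weil restriction followed by a pullback. For the forward direction, suppose $U$ is the unipotent quotient of a commutative pseudo-reductive group $C$ over $k$, so that $1\arrow T\arrow C\arrow U\arrow 1$ with $T$ the maximal torus of $C$. Pseudo-reductivity is preserved by separable field extensions \cite[Proposition 1.1.9(1)]{CGP}, so $C_{k_s}$ is again commutative pseudo-reductive, and since base change is exact it is an extension of $U_{k_s}$ by the torus $T_{k_s}$. Thus $U_{k_s}$ is the unipotent quotient of $C_{k_s}$.

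For the converse, I start from a commutative pseudo-reductive group over $k_s$ with unipotent quotient $U_{k_s}$. As everything is of finite type, this extension descends to a finite separable subextension $k'/k$ of $k_s$: after enlarging $k'$ I obtain a commutative pseudo-reductive group $G'$ over $k'$ that is an extension $1\arrow T'\arrow G'\arrow U_{k'}\arrow 1$ with $T'$ a $k'$-torus. I then apply Weil restriction along the finite \'etale extension $k'/k$. The group $R_{k'/k}G'$ is smooth, connected, affine and commutative, and it sits in an extension
$$1\arrow R_{k'/k}T'\arrow R_{k'/k}G'\arrow R_{k'/k}(U_{k'})\arrow 1,$$
where $R_{k'/k}T'$ is again a torus since $k'/k$ is separable. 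It is pseudo-reductive: over $k_s$ it becomes the product of the base changes of $G'$ along the $[k'\colon k]$ embeddings of $k'$ into $k_s$, each factor is pseudo-reductive, and in a product of commutative pseudo-reductive groups any smooth connected unipotent subgroup projects to a trivial subgroup in every factor and so is trivial; pseudo-reductivity then descends from $k_s$ back to $k$, since the maximal smooth connected unipotent $k$-subgroup would become such a subgroup over $k_s$.

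The essential point is to recover $U$ itself instead of $R_{k'/k}(U_{k'})$. For this I use the adjunction unit $U\inj R_{k'/k}(U_{k'})$, which is a monomorphism because $k\arrow k'$ is faithfully flat, so it is injective on points valued in every $k$-algebra. I set $P:=(R_{k'/k}G')\times_{R_{k'/k}(U_{k'})}U$. Being the pullback along a monomorphism, $P$ is a subgroup scheme of $R_{k'/k}G'$; and since $R_{k'/k}G'\arrow R_{k'/k}(U_{k'})$ is a smooth surjection with kernel the torus $R_{k'/k}T'$, the group $P$ is an extension $1\arrow R_{k'/k}T'\arrow P\arrow U\arrow 1$, hence smooth, connected, affine and commutative, with unipotent quotient exactly $U$. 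Finally $P$ is pseudo-reductive, since any smooth connected unipotent $k$-subgroup of $P$ is one of the pseudo-reductive group $R_{k'/k}G'$ and therefore trivial. This exhibits $U$ as the unipotent quotient of a commutative pseudo-reductive group over $k$.

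The substantive idea is the pullback: Weil restriction by itself only realizes $R_{k'/k}(U_{k'})$ as a unipotent quotient, and the device that brings us back to $U$ is the combination of the adjunction unit $U\inj R_{k'/k}(U_{k'})$ with the fact that, for commutative groups, pseudo-reductivity passes trivially to smooth connected subgroups. I expect the main care to be needed in the standard Weil-restriction bookkeeping---smoothness, connectedness and preservation of tori, and the product description of $(R_{k'/k}G')_{k_s}$ with pseudo-reductive factors---all of which follow from the formalism of \cite[Appendix A.5]{CGP}.
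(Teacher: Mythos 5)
Your proof is correct and follows essentially the same route as the paper: base change for the forward direction, and for the converse, descent to a finite separable extension, Weil restriction (noting the restricted torus stays a torus), and pullback of the restricted extension along the unit $U\inj R_{k'/k}(U_{k'})$ to recover $U$ as the unipotent quotient. The only cosmetic difference is that you verify pseudo-reductivity of $R_{k'/k}G'$ by hand via the product decomposition over $k_s$, where the paper simply cites \cite[Proposition 1.1.10]{CGP}.
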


\begin{proof}
In one direction,
let $1\arrow T\arrow E\arrow U\arrow 1$ be a commutative pseudo-reductive
extension of $U$ by a torus $T$ over $k$. Then $E_{k_s}$ is
an extension $1\arrow T_{k_s}\arrow E_{k_s}\arrow U_{k_s}\arrow 1$,
and $E_{k_s}$ is pseudo-reductive, because the maximal smooth
connected affine normal $k_s$-subgroup
of $E_{k_s}$ is Galois-invariant and hence defined over $k$
\cite[Proposition 1.1.9]{CGP}.

Conversely, suppose that $U_{k_s}$ is the unipotent quotient
of some commutative pseudo-reductive group over $k_s$.
Then there is a finite separable extension $F$ of $k$
and an extension $1\arrow T\arrow E\arrow U_F\arrow 1$
of $U_F$ by a torus $T$ over $F$ such that $E$ is pseudo-reductive.
The Weil restriction $R_{F/k}E$ is an extension
$$1\arrow R_{F/k}T\arrow R_{F/k}E\arrow R_{F/k}(U_F)\arrow 1.$$
Here $R_{F/k}E$ is pseudo-reductive, by the universal
property of Weil restriction \cite[Proposition 1.1.10]{CGP}. Also,
$R_{F/k}T$ is a torus because $F$ is separable over $k$.
Finally, $U$ is a subgroup of $R_{F/k}(U_F)$ by the universal
property of Weil restriction. The inverse image of $U$
in $R_{F/k}E$ is a pseudo-reductive extension of $U$
by $R_{F/k}T$.
\end{proof}

We now begin to analyze extensions of unipotent groups
by the multiplicative group.
The group $\Ext^1(A,\Gm)$ of commutative extensions of an abelian variety
$A$ by the multiplicative group can be identified with the
group $\Pic^0(A)$ of isomorphism classes of
numerically trivial line bundles on $A$
\cite[Theorem VII.6]{SerreAlgebraic}.
For smooth connected commutative unipotent groups $U$,
it was known that $\Ext^1(U,\Gm)$ is a subgroup of $\Pic(U)$
\cite[Lemma 6.13.1]{KMT},
but the following lemma gives an explicit description of that
subgroup, analogous to what happens for abelian varieties.

\begin{lemma}
\label{ext}
Let $U$ be a smooth connected commutative unipotent group
over a field $k$. Then $\Ext^1(U,\Gm)$ is the subgroup of
elements $L\in\Pic(U)$ such that the translation $T_aL$ is isomorphic
to $L$ for all separable extension fields $F$ of $k$ (not necessarily
algebraic) and all $a\in U(F)$.
In short: $\Ext^1(U,\Gm)=\Pic(U)^U$.

The group $\Ext^1(U,\Gm)$ can also be described as the subgroup of primitive
elements in $\Pic(U)$, meaning that
$$\Ext^1(U,\Gm)=\{y\in \Pic(U): m^*(y)=\pi_1^*(y)+\pi_2^*(y)
\in Pic(U\times U)\},$$
where $m\colon U\times U\arrow U$ is the group operation
and $\pi_1,\pi_2\colon U\times U
\arrow U$ are the two projections.
\end{lemma}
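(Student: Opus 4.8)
The plan is to identify $\Ext^1(U,\Gm)$ with the primitive elements of $\Pic(U)$ by means of Breen's spectral sequence, and then to match the primitive elements with the translation-invariant ones by a pair of restriction arguments. Concretely, I would use the descent (bar) spectral sequence $E_1^{s,t}=H^t_{\fppf}(U^s,\Gm)\Rightarrow H^{s+t}(BU,\Gm)$ attached to the classifying simplicial scheme of $U$, under the identification of $\Ext^1(U,\Gm)$ with the reduced cohomology $H^2(BU,\Gm)$. The first face-map differential $d_1\colon E_1^{1,t}\to E_1^{2,t}$ is the alternating sum $\pi_1^*-m^*+\pi_2^*$, so its kernel on $E_1^{1,1}=\Pic(U)$ is exactly the group of primitive line bundles.

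The key degeneration input is the computation of the low-weight terms. Since $U$ is unipotent, so is every power $U^s$, and over $\overline{k}$ each $U^s$ is isomorphic to affine space; hence $O(U^s)^*=k^*$ for all $s$. Feeding the constants $k^*$ through the bar differentials shows $E_2^{s,0}=0$ for $s\geq 1$, which kills the potential contribution of $E^{2,0}$ to total degree $2$ and also forces the higher differential $d_2\colon E_2^{1,1}\to E_2^{3,0}$ to vanish. Because $\Pic(k)=0$, there is no incoming differential $E_1^{0,1}\to E_1^{1,1}$. Therefore $E_\infty^{1,1}=E_2^{1,1}=\ker(m^*-\pi_1^*-\pi_2^*)$, and I conclude that $\Ext^1(U,\Gm)$ is the primitive subgroup $\{y\in\Pic(U): m^*y=\pi_1^*y+\pi_2^*y\}$. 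This realizes, for $n=1$, the same mechanism the paper uses for $\Ext^2_k(\Ga,\Gm)=\ker(\Br(\A^1_k)\to\Br(\A^2_k))$.

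It remains to show that the primitive subgroup equals $\Pic(U)^U$. For primitive $L$, restricting the isomorphism $m^*L\cong\pi_1^*L\otimes\pi_2^*L$ along $\{a\}\times U$ for any $a\in U(F)$ (with $F/k$ separable) gives $T_a^*L\cong L$, since $\pi_1^*L$ restricts there to a geometrically trivial line bundle; thus primitive bundles are translation invariant. For the converse I would run an affine substitute for the seesaw theorem. Set $M=m^*L\otimes\pi_1^*L^{-1}\otimes\pi_2^*L^{-1}$ on $U\times U$, and apply translation invariance at the generic point $\eta\in U(k(U))$, a legitimate test point because $U$ is smooth, so $k(U)$ is separable over $k$. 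This shows that $M$ is trivial on the generic fibre of $\pi_1$. Since $U\times U$ is smooth, hence locally factorial, a line bundle trivial on the generic fibre of $\pi_1$ is represented by a divisor supported on vertical fibres, so $M\cong\pi_1^*N$ for some $N\in\Pic(U)$; restricting to the zero-section $U\times\{0\}$, where $M$ is manifestly trivial, forces $N\cong O_U$ and hence $M\cong O$. Thus $L$ is primitive, and $\Pic(U)^U$ coincides with the primitive subgroup.

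The main obstacle I anticipate is the bookkeeping in the spectral-sequence step: one must pin down the identification $\Ext^1(U,\Gm)=H^2(BU,\Gm)$ (equivalently, the indexing that sends $\Ext^n$ to the primitive part of $H^n(U,\Gm)$) and verify that the only surviving contribution in the relevant total degree is $E_2^{1,1}$. All of this rests on the two vanishing facts $O(U^s)^*=k^*$ and $\Pic(k)=0$, so I would take care to justify $O(U^s)^*=k^*$ from the geometric triviality of unipotent groups. The seesaw substitute in the third paragraph is comparatively routine once one exploits the smoothness of $U$ (to license the generic point as a separable test point) and the local factoriality of $U\times U$.
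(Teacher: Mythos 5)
Your third and fourth paragraphs are essentially the paper's own argument and are correct: primitivity gives translation invariance by restricting $m^*L\cong\pi_1^*L\otimes\pi_2^*L$ to $\{a\}\times U$, and conversely testing invariance at the generic point of $U$ (legitimate because $U$ is smooth, so $k(U)/k$ is separable), writing $M=m^*L\otimes\pi_1^*L^{-1}\otimes\pi_2^*L^{-1}$ as a pullback along a projection, and restricting to $U\times\{0\}$ is exactly the seesaw substitute the paper runs. The gap is in the first step. The paper does not use the bar (classifying-space) spectral sequence of $BU$; it uses Breen's spectral sequence $E_1^{i,j}=H^j_{\fppf}(X_i(U),\Gm)\Rightarrow \Ext^{i+j}(U,\Gm)$, whose schemes $X_i(U)$ are certain disjoint unions of powers of $U$ (e.g.\ $X_2(U)=U^3\sqcup U^2$, not $U^2$) and which converges to Ext in the abelian category of fppf sheaves \emph{by construction}. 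Your substitute, $H^*(BU,\Gm)$ together with the asserted identification $\Ext^1(U,\Gm)\cong \tilde{H}^2(BU,\Gm)$, is precisely where a non-formal input is needed, and you leave it unproved: the reduced $H^2$ of the simplicial scheme $BU$ classifies \emph{central} extensions of $U$ by $\Gm$ as sheaves of groups, whereas $\Ext^1(U,\Gm)$ is taken in the abelian category of \emph{commutative} group sheaves. The discrepancy between the bar construction and Ext of abelian sheaves is the entire reason Breen's construction exists (and in higher degrees the two genuinely differ --- that is the subject of the Breen paper this article cites), so it cannot be dismissed as bookkeeping, which is how your last paragraph treats it.

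In degree $2$ the gap is fillable, so your route can be repaired. Given a central extension $1\arrow \Gm\arrow E\arrow U\arrow 1$, its commutator pairing is a bi-multiplicative morphism $c\colon U\times U\arrow \Gm$, i.e.\ a global unit on $U\times U$; since $U\times U$ becomes affine space over $\overline{k}$, one has $O(U\times U)^*=k^*$, so $c$ is constant, and $c(0,0)=1$ forces $c\equiv 1$. Hence every central extension of $U$ by $\Gm$ is automatically commutative, and (granting the standard, but also citation-requiring, fact that $\tilde{H}^2(BU,\Gm)$ classifies central extensions of fppf sheaves of groups) your identification becomes correct; note that the repair uses the very same vanishing $O(U^s)^*=k^*$ that drives your degeneration argument. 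With that in place, the rest of your spectral-sequence analysis (exactness of the row of constants, $\Pic(\Spec k)=0$, vanishing of $d_2\colon E_2^{1,1}\arrow E_2^{3,0}$ because $E_2^{3,0}=0$) is sound. Alternatively, and more simply, run the identical argument on Breen's spectral sequence, as the paper does: then the abutment is $\Ext^*(U,\Gm)$ from the start and no comparison between central and commutative extensions is required.
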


\begin{proof}
Denote the group operation on $U$ by addition. We will use
Breen's spectral sequence for computing $\Ext$ groups
in the abelian category
of fppf sheaves over $k$ \cite{BreenBull}. One can also give
a more elementary but less efficient proof by imitating
Serre's proof of the analogous statement for abelian varieties
\cite[Theorem VII.5]{SerreAlgebraic}. 

For any commutative
$k$-group schemes $B$ and $C$, Breen's spectral sequence has the form
$$E_1^{i,j}=H^j_{\fppf}(X_i(B),C) \imp \Ext^{i+j}(B,C),$$
where the $k$-schemes $X_i(B)$ are explicit disjoint unions of powers of $B$,
starting with $X_0(B)=B$, $X_1(B)=B^2=B\times_k B$, and $X_2(B)=
B^3\coprod B^2$. The differential $d_1$ is an explicit alternating sum
of pullback maps. In particular, $d_1$ on the 0th column
is the homomorphism $\alpha\colon  H^j(B,C)\arrow H^j(B^2,C)$
given by $\alpha=m^*-\pi_1^*-\pi_2^*$.

We apply the spectral sequence to compute $\Ext^1(U,\Gm)$ for $U$ 
a smooth connected commutative unipotent group $U$
over a field $k$, with the following $E_1$ term. Since $\Gm$
is smooth over $k$, the fppf cohomology groups shown can also be viewed
as etale cohomology groups \cite[Theorem III.3.9]{Milne}.
$$\xymatrix{\cdots &&& \\
H^1(U,\Gm) \ar[r]\ar@{-->}[rrd]& H^1(U^2,\Gm) \ar[r]&
  \cdots & \\
H^0(U,\Gm) \ar[r]& H^0(U^2,\Gm) \ar[r]& H^0(U^3,\Gm) \oplus H^0(U^2,\Gm)
  \ar[r] & }$$
Since $U$ becomes isomorphic to affine space as a scheme
over the algebraic closure $\overline{k}$
\cite[Corollaire XVII.4.1.3]{SGA3},
we have $O(U^r)^*=k^*$
for every $r\geq 0$. It follows that the $d_1$ differential
on the zeroth row of the spectral sequence is exact, by comparing
with the spectral sequence computing $\Ext^*(0,\Gm)=0$.
In particular, the $d_2$ differential shown as a dotted arrow
maps into the zero group.
Therefore, the spectral sequence gives an isomorphism
$$\Ext^1(U,\Gm)=\ker(\alpha\colon  \Pic(U)\arrow \Pic(U\times U)),$$
as we want.
The right side is called the group of primitive line bundles on $U$.

We now prove the other description of $\Ext^1(U,\Gm)$.
For a primitive line bundle $L$ on $U$, fix a trivialization
of $L$ at the origin in $U$. Then there is an isomorphism
$m^*L\cong \pi_1^*L\otimes \pi_2^*L$, which is uniquely determined 
if we require it to be compatible with the trivialization of $L$
at $(0,0)$ in $U\times U$. (That isomorphism gives a canonical
isomorphism $L_{a+b}\cong L_a\otimes L_b$ for all $a,b\in U(F)$
and all extension fields $F$ of $k$.) Restricting that isomorphism
to $U$ times an $F$-rational point of $U$ gives
an isomorphism $T_aL\cong L$ on $U_F$ for all $a\in U(F)$,
and all extension
fields $F$ of $k$. Conversely, suppose that $T_aL\cong L$
for all $a\in U(F)$ and all separable extension fields $F$ of $k$.
We apply this to the function field $F=k(U)$ and $a\in U(F)$ the
generic point. Here $F$ is separable over $k$ since $U$ is smooth
over $k$. We can rewrite the isomorphism $T_aL\cong L$ on $U_F$
as $T_aL\cong L_a\otimes L$, since $L_a$ is just a 1-dimensional
$F$-vector space. This means that the line bundle
$M:=m^*(L)\otimes \pi_1^*(L^*)\otimes \pi_2^*(L^*)$ on $U\times U$
is trivial on $U\times (U-S)$ for some codimension-1 closed
subset $S$ of $U$. Therefore $M$ is linearly equivalent on $U\times U$ to
$\pi_2^*D$ for some divisor $D\subset U$ supported on $S$. Restricting
to $0\times U$, where $M$ is trivial, shows that $D$ is linearly equivalent
to 0 on $U$. So $M$ is trivial on $U\times U$. That is, $L$ is primitive.
\end{proof}

\begin{lemma}
\label{dim1pic}
Let $U$ be a $k$-wound group
of dimension 1 over a field $k$. Then $\Pic(U)\neq 0$.
\end{lemma}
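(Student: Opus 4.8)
The plan is to realize $U$ as the smooth affine complement of a single closed point in its regular projective model, and to detect a nonzero class in $\Pic(U)$ through the degree of that point at infinity. First I would let $X$ be the regular compactification of $U$, i.e.\ the unique regular projective curve over $k$ with function field $k(U)$, into which the smooth affine curve $U$ embeds as a dense open subscheme. Since $U$ is a connected group with the identity as a $k$-rational point, $U$ is geometrically integral, so $X$ is geometrically integral with $H^0(X,O_X)=k$; and the identity $e\in U(k)\subset X(k)$ gives a degree-$1$ divisor, so the degree homomorphism $\deg\colon\Pic(X)\to\Z$ is surjective. Localizing along the open immersion $U\hookrightarrow X$ produces the exact sequence $\bigoplus_{P\in X-U}\Z\cdot[P]\to\Pic(X)\to\Pic(U)\to 0$.

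Next I would analyze the boundary $X-U$. Because $U$ is a smooth connected unipotent group of dimension $1$, $U_{\overline{k}}$ is isomorphic to $\A^1$ as a scheme, so the smooth compactification of $U_{\overline{k}}$ is $\P^1_{\overline{k}}$, with a single point at infinity, and this $\P^1_{\overline{k}}$ is the normalization of $X_{\overline{k}}$. Since the normalization map is an isomorphism over the normal open $U_{\overline{k}}$, the single point at infinity of $\P^1_{\overline{k}}$ maps onto the whole of $X_{\overline{k}}-U_{\overline{k}}$. Hence $X-U$ is a single closed point $P_\infty$ whose base change to $\overline{k}$ is supported at one point; equivalently $\kappa(P_\infty)$ is purely inseparable over $k$, so that $\deg P_\infty=[\kappa(P_\infty):k]$ is a power of $p$.

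The main obstacle, and the only place where woundedness enters, is ruling out $\deg P_\infty=1$. If $\deg P_\infty=1$, then $P_\infty$ is a $k$-rational point at which the regular curve $X$ has trivial (hence separable) residue field extension, so $X$ is smooth over $k$ at $P_\infty$; as $U$ is already smooth, $X$ is then a smooth geometrically integral genus-$0$ curve with a rational point, whence $X\cong\P^1$ and $U\cong\P^1-\{k\text{-point}\}\cong\Ga$, contradicting that $U$ is $k$-wound. Therefore $\deg P_\infty=p^a$ with $a\geq 1$.

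Finally I would conclude by pushing the localization sequence through the degree map. The image of $\Z\cdot[P_\infty]$ under $\deg\colon\Pic(X)\to\Z$ is $p^a\Z$, so the surjection $\deg$ descends to a surjection $\Pic(U)=\Pic(X)/\Z\cdot[P_\infty]\to\Z/p^a\Z$. Since $a\geq 1$, the target is nonzero, and therefore $\Pic(U)\neq 0$. I expect steps one and four to be routine bookkeeping, the identification of a unique purely inseparable point at infinity to be the technical heart, and the passage from woundedness to $\deg P_\infty>1$ to be the conceptual crux of the argument.
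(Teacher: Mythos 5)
Your skeleton coincides with the paper's: take the regular compactification $X$, observe that $X-U$ is a single closed point $P_\infty$, and push the (surjective) degree map through the excision sequence $\Z\cdot[P_\infty]\to\Pic(X)\to\Pic(U)\to 0$. Your route to the divisibility statement is a correct variant of the paper's: you get pure inseparability of $\kappa(P_\infty)/k$ from the normalization $\P^1_{\overline{k}}\to X_{\overline{k}}$, where the paper instead passes to $k_s$. The genuine gap is at the step you yourself call the crux, excluding $\deg P_\infty=1$. From $\deg P_\infty=1$ you correctly deduce that $X$ is smooth, $X\cong\P^1$, and hence $U\cong\P^1-\{k\text{-point}\}\cong\A^1$ \emph{as a $k$-variety}; you then write ``$\cong\Ga$, contradicting that $U$ is $k$-wound.'' This conflates an isomorphism of varieties with an isomorphism of groups. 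Woundedness is a hypothesis on the group structure (in dimension one: $U$ not isomorphic to $\Ga$ \emph{as a $k$-group}), so a variety isomorphism $U\cong\A^1$ yields no contradiction by itself. The implication you need --- a one-dimensional smooth connected unipotent $k$-group whose underlying variety is $\A^1$ must be $\Ga$ as a group --- is true, but it is a genuine theorem (Russell's theorem on forms of the affine line); moreover, since $\Pic(\A^1)=0$, it is a special case of the contrapositive of the very lemma you are proving, so it cannot be waved through as obvious. As written, your proof never actually brings the group structure of $U$ to bear, only its underlying variety, and the hypothesis cannot be used that way.

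The gap can be closed in two ways. A self-contained fix: by uniqueness of the regular compactification and smoothness of $U$, the translation action of $U$ on itself extends to an action on $X\cong\P^1$ (the paper uses exactly this extension technique in the proof of Lemma \ref{nonzero}); the action fixes $P_\infty$, so it gives an injective homomorphism from $U$ to the stabilizer of $\infty$ in $\Aut(\P^1)=PGL_2$, namely the group $\{x\mapsto ax+b\}$. Since $U$ is unipotent and connected, it maps trivially to the $\Gm$-quotient of that stabilizer, so $U$ embeds in the translation subgroup $\Ga$ and hence $U\cong\Ga$ by dimension count, the desired contradiction. Alternatively, argue as the paper does: at a degree-one (hence smooth) point of $X-U$, a formal parameter produces a point of $U(k((t)))$ that does not extend to $U(k[[t]])$, contradicting the valuative property of $k$-wound groups \cite[Proposition V.8]{Oesterle}. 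With either ingredient inserted at the crux, the rest of your argument is correct.
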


Lemma \ref{dim1pic} was proved
by Kambayashi-Miyanishi-Takeuchi \cite[Theorem 6.5(i)]{KMT}.
We give a proof here for clarity.

\begin{proof}
Let $C$ be the unique regular compactification of $U$ over $k$.
Then $C-U$ is a single closed point, because $U$ becomes
isomorphic to $\A^1$ over the algebraic closure $\overline{k}$.
The group $\Pic(U)$ is the quotient of $\Pic(C)$ by the class
of the closed point $C-U$. I claim that the closed point $C-U$ has
degree a multiple of $p$ over $k$ (in fact, a power of $p$
greater than 1). It suffices to prove this
after passing to the separable closure $k_s$; then $U_{k_s}$ remains
$k$-wound and $C_{k_s}$ remains regular \cite[Prop.~X.6.5]{Bourbaki}. 
All finite field extensions of $k_s$ have degree a power of $p$,
so it suffices to show that $(C-U)(k_s)=\emptyset$. So suppose
that there is a $k_s$-rational point $w$ in $C-U$. Since $C$
is regular, it is smooth over $k_s$ near $w$. This gives a point
of $U(k_s((t)))$ that does not extend to $U(k_s[[t]])$, contradicting
a property of $k$-wound groups \cite[Proposition V.8]{Oesterle}.

Therefore, the degree homomorphism $\deg\colon \Pic(C)\arrow \Z$
passes to a well-defined homomorphism $\Pic(U)\arrow \Z/p$.
The homomorphism $\Pic(U)\arrow \Z/p$
is surjective, since the line bundle $O(0)$ on $U$
has degree 1, where $0\in U(k)$ is the identity element. 
\end{proof}

\begin{lemma}
\label{nonzero}
Let $U$ be a $k$-wound group
of dimension 1 over a separably closed field $k$.
Then $\Ext^1(U,\Gm)\neq 0$.
\end{lemma}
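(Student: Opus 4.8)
The plan is to use Lemma~\ref{ext} to reduce to producing a single nonzero primitive line bundle on $U$, and then to locate one by studying how translations act on the Picard group of a compactification. First I would record that over a separably closed field primitivity can be tested on $k$-points: for $L\in\Pic(U)$ the assignment $a\mapsto T_a^*L\otimes L^{-1}$ is the classifying morphism $\delta_L\colon U\arrow \Pic^0_{C/k}$ of the corresponding family of degree-$0$ line bundles on the regular compactification $C$ of $U$. Since $U$ is smooth with $U(k)$ Zariski dense and $\Pic^0_{C/k}$ is separated, $\delta_L$ is the zero morphism as soon as it vanishes on $U(k)$. By Lemma~\ref{ext} this gives $\Ext^1(U,\Gm)=\Pic(U)^{U(k)}$, so it suffices to find a nonzero class fixed by every translation $T_a$, $a\in U(k)$.

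Next I would compactify. Let $w=C\setminus U$ be the single closed point at infinity, of degree $p^n>1$ over $k$ (as in Lemma~\ref{dim1pic}). Then $\Pic(U)=\Pic(C)/\Z[w]$, the degree map induces an isomorphism $\Pic^0(U)\cong J(k)$ where $J:=\Pic^0_{C/k}$, and $\Pic(U)/\Pic^0(U)\cong \Z/p^n$. Because the normalization of $C_{\overline{k}}$ is $\P^1$, the smooth connected group $J$ is unipotent. The translation action of $U$ on itself extends to an action on $C$ fixing $w$ (the regular compactification is unique), hence to an action of $U$ on $J$ by group automorphisms; since $U$ is unipotent this action is by unipotent automorphisms, so the induced action on the vector space $\mathrm{Lie}(J)$ has nonzero invariants. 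Consequently, when $J\neq 0$ the fixed subgroup scheme $J^U\subset J$ is nonzero. In the remaining genus-$0$ case $J=0$, so $C\cong\P^1$ and $\Pic(U)=\Z/p^n$; here $T_a^*O_U(0)=O_U(\{-a\})\cong O_U(0)$ for every $a\in U(k)$ (any two $k$-points of $\P^1$ are linearly equivalent), so $O_U(0)$ is a nonzero primitive class and we are finished.

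The hard part is the case $J\neq 0$: I must upgrade $J^U\neq 0$ (a statement about a subgroup scheme) to the existence of a nonzero $k$-rational invariant class, i.e.\ to $(J^U)(k)\neq 0$. This does not follow formally, because over an imperfect --- even separably closed --- field a positive-dimensional $k$-group scheme can have trivial group of $k$-points, as for the scheme $\{x^p=ty^p\}$ mentioned in the introduction. My plan to overcome this is to show that the maximal smooth $k$-subgroup of $J^U$ is already positive-dimensional, so that its $k$-points are Zariski dense and in particular nonzero; a promising route is to realize the relevant invariant classes through the non-smooth geometry of $C$ at $w$, i.e.\ through the unipotent part $R_{\kappa(w)/k}\Gm/\Gm$ of the generalized Jacobian with modulus $w$, whose group of $k$-points $\kappa(w)^*/k^*$ is large. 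Either way, a nonzero element of $(J^U)(k)$ is a nonzero primitive line bundle in $\Pic^0(U)$, which by Lemma~\ref{ext} gives the desired nonzero class in $\Ext^1(U,\Gm)$.
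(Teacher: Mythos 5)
Your first two paragraphs track the paper's strategy closely and are essentially sound: the reduction of primitivity to invariance under $U(k)$-translations (via density of $k$-points and Lemma \ref{ext}), the identification $\Pic^0(U)\cong J(k)$ with $J=\Pic^0_{C/k}$ unipotent, and the genus-zero case all work. Even your inference that nonzero scheme-theoretic invariants in $\mathrm{Lie}(J)$ force $J^U\neq 0$ as a group scheme is correct. But the proof is not complete, and you say so yourself: the entire difficulty of the lemma sits in the step you leave as a ``plan,'' namely upgrading $J^U\neq 0$ (which could be infinitesimal, e.g.\ a copy of $\alpha_p$, with only the trivial $k$-point) to the existence of a nonzero element of $J(k)$ fixed by translations. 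Moreover, the ``promising route'' you sketch would not close this gap. The group $(R_{\kappa(w)/k}\Gm)/\Gm$ you invoke is the kernel of the surjection from the generalized Jacobian with modulus $w$ (equivalently, $\Pic^0$ of the curve obtained by pinching $\kappa(w)$ down to $k$) onto $J$; its elements are classes of divisors that are principal on $C$ itself, so they vanish in $\Pic(C)$ and hence in $\Pic(U)=\Pic(C)/\Z\cdot[w]$, and cannot supply nonzero primitive classes. Note also that over $k$ the curve $C$ is regular, so the local-units description of the unipotent Jacobian exists only after base change to $\overline{k}$, and descending such structure to an imperfect (even separably closed) $k$ is exactly the kind of step that fails.

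The paper closes the gap by a mechanism your fixed-point-scheme argument does not see. Since $P:=\Pic^0_{C/k}$ is a smooth connected commutative unipotent group, the semidirect product $U\ltimes P$ is unipotent, hence nilpotent. Setting $P^m$ equal to the closed subgroup generated by the elements $(u_1-1)\cdots(u_m-1)q$ with $u_i\in U(k_s)$ and $q\in P(k_s)$, one obtains a descending chain of closed connected subgroups that terminates at zero by nilpotency, and the last nonzero $P^m$ is smooth (being generated by $k_s$-points, cf.\ \cite[Lemma C.4.1]{CGP}), connected, nontrivial, and carries a trivial $U$-action. Smoothness is precisely what $J^U$ may lack, and it is what guarantees $P^m(k)=P^m(k_s)\neq 0$ over the separably closed field $k$, yielding the desired nonzero class in $\Ext^1(U,\Gm)$ via Lemma \ref{ext}. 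To complete your argument you would need to replace the passage through $\mathrm{Lie}(J)$ and $J^U$ by this nilpotency filtration, or otherwise prove directly that the maximal smooth $k$-subgroup of $J^U$ is nontrivial; as written, that key step is missing.
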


This can fail for $k$ not separably closed, by Example \ref{dim1ex}.

\begin{proof}
Let $C$ be the regular compactification of $U$ over $k$.
Let $\Pic_{C/k}$ be the Picard scheme \cite[Theorem 9.4.8]{Kleiman}.
Then $\Pic_{C/k}$ is a $k$-group scheme, locally of finite
type, with $\Pic(C_F)\cong \Pic_{C/k}(F)$ for every field
extension $F$ of $k$ (using that $H^0(C,O)=k$ and $C$ has a $k$-rational
point). Since $C$ is a geometrically irreducible projective curve,
the kernel $\Pic^0_{C/k}$
of the degree homomorphism $\Pic_{C/k}\arrow \Z$ is smooth, connected,
and of finite type over $k$
\cite[Theorem 8.2.3 and Proposition 8.4.2]{BLR}.
The curve $C$ becomes rational
over the algebraic closure $\overline{k}$, and so $\Pic^0_{C/k}$
is affine (as the abelian variety quotient of $(\Pic^0_{C/k})_{\overline{k}}$
is the Jacobian of the normalization of $C_{\overline{k}}$
\cite[section V.17]{SerreAlgebraic}, \cite[Proposition 9.2.10]{BLR}).
Because $U_{\overline{k}}$ is isomorphic
to $\A^1_{\overline{k}}$, the point $C_{\overline{k}}-U_{\overline{k}}$
corresponds to a single point on the normalization $\P^1_{\overline{k}}$
of $C_{\overline{k}}$, and so $(\Pic^0_{C/k})_{\overline{k}}$
is unipotent \cite[section V.17]{SerreAlgebraic},
\cite[Proposition 9.2.9]{BLR}.
It follows that $\Pic^0_{C/k}$
is unipotent.

The action of $U$ on itself by translation extends to an action
of $U$ on $C$, by the uniqueness of the regular
compactification $C$ and the smoothness of $U$. 
By the proof of Lemma \ref{dim1pic}, $\Pic(U)$ is an extension
of a finite cyclic group by the group $\Pic^0_{C/k}(k)$.
The action of $U(k)$ by translations on $\Pic(U)$ clearly restricts
to the action of $U(k)$ on $\Pic^0_{C/k}(k)$ by translations.

If $\Pic^0_{C/k}$ is zero, then $\Pic_{C/k}$ is isomorphic to $\Z$
by the degree. Then the action of $U$ on $\Pic_{C/k}$ is trivial,
since $U$ is connected. In this case,
$\Pic(U_F)$ is a finite cyclic
group for all separable extension fields $F$ of $k$, and $U(F)$ acts
trivially on $\Pic(U_F)$ since $\Pic(C_F)\arrow \Pic(U_F)$
is surjective. So $\Ext^1(U,\Gm)=\Pic(U)$ in this case
(using Lemma
\ref{ext}) and this is a nonzero cyclic group
by Lemma \ref{dim1pic}. (For this case, we did not need
$k$ to be separably closed.)

Otherwise, $\Pic^0_{C/k}$ is not zero. In this case, we will
show that the subgroup $\Pic^0(C)^U$ of $\Ext^1(U,\Gm)$ is not zero,
using the notation of Lemma \ref{ext}.
Since $P:=\Pic^0_{C/k}$ is
a smooth connected commutative unipotent $k$-group,
the semidirect product
$U\ltimes P$ is unipotent, and therefore is a nilpotent group
by the results listed in section \ref{notation}. That
implies that the action of $U$ on $P$ must be nilpotent.
In more detail, write $(u-1)q$ to mean $uq-q$ for any extension field
$F$ of $k$, $u\in U(F)$, and
$q\in P(F)$, where the group operation on $P$ is written additively.
If we define $P^m$ for each natural number $m$
as the closed subgroup of $P$
generated by elements $(u_1-1)\cdots (u_m-1)q$ for $u_i\in U(k_s)$
and $q\in P(k_s)$, then the subgroups $P=P^0\supset P^1 \supset
P^2\supset\cdots $ are closed and connected, eventually equal to zero
because the group $U\ltimes P$ is nilpotent. Also,
the group $U$ acts trivially on each $P^m/P^{m+1}$.

In particular, the last $P^m$ not equal to zero is a nontrivial
smooth connected subgroup of $\Pic^0_{C/k}$ such that $U$
acts trivially on $P^m$. Thus $P^m(k)\subset \Ext^1(U,\Gm)$
by Lemma \ref{ext}. Since $k$ is separably closed,
$P^m(k)\neq 0$.
\end{proof}

\begin{corollary}
\label{cordim1}
Let $U$ be a $k$-wound
group of dimension 1 over a field $k$. Then $U$ is the unipotent
quotient of some commutative pseudo-reductive group $E$ over $k$.

Suppose in addition that $k$ is separably closed. Then we can take
$E$ to be an extension of $U$ by $\Gm$.
Also, for any ordinary
elliptic curve $A$ over $k$, there is an extension of $U$
by $A$ which is a pseudo-abelian variety.
\end{corollary}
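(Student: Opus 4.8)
The plan is to establish the separably closed case first, reduce the general first assertion to it via Lemma~\ref{sep}, and then read off the statement about elliptic curves from Lemma~\ref{equiv} and Corollary~\ref{ordinary}. Before starting I would note that $U$ has exponent $p$: the homomorphism $[p]\colon U\arrow U$ becomes zero after base change to $\overline{k}$, where $U$ becomes $\Ga$, and a homomorphism of group schemes that vanishes geometrically vanishes. This is what lets me invoke the earlier constructions, which are phrased for groups of exponent $p$.

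Suppose first that $k$ is separably closed. By Lemma~\ref{nonzero} we have $\Ext^1(U,\Gm)\neq 0$, so I can choose a nontrivial extension $1\arrow \Gm\arrow E\arrow U\arrow 1$. The crucial step, which I expect to carry the weight of the argument, is that any such nontrivial $E$ is automatically pseudo-reductive. Since $E$ is commutative, I must show that there is no nontrivial smooth connected unipotent $k$-subgroup $N\subseteq E$. Such an $N$ meets the torus $\Gm$ trivially (the only unipotent subgroup scheme of a torus is trivial), so the composite $N\arrow U$ has trivial kernel and is therefore a closed immersion onto a closed subgroup of $U$. As $\dim U=1$ and $N$ is nontrivial, smooth, and connected, this subgroup is all of $U$, so $N$ maps isomorphically onto $U$; the inverse gives a section of $E\arrow U$ and splits the extension, contradicting its nontriviality. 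This proves the second assertion (an extension by $\Gm$ itself). Feeding it into Lemma~\ref{sep}, and using that $U_{k_s}$ is again $k_s$-wound of dimension $1$, yields the first assertion over an arbitrary field $k$.

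For the final assertion I fix an ordinary elliptic curve $A$ over the separably closed field $k$. Here $\ker(F)\subset A$ is automatically isomorphic to $\mu_p$, since every $k$-form of $\mu_p$ over a separably closed field is trivial, so the hypotheses on $A$ in the earlier results are satisfied. I then split according to whether $A$ can be defined over the subfield $k^p$. If it cannot, Corollary~\ref{ordinary} (applicable because $U$ has exponent $p$) directly produces a pseudo-abelian variety that is an extension of $U$ by $A$. If it can, I invoke Lemma~\ref{equiv}: its condition (3), the existence of a commutative pseudo-reductive extension of $U$ by $\Gm$, is exactly what the separably closed case above provides, so condition (1) holds, giving a pseudo-abelian variety extension of $U$ by $A$.

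The main obstacle is isolating and proving the pseudo-reductivity of the nontrivial extension $E$; once the dimension-counting and splitting argument is in hand, the rest is assembly of Lemmas~\ref{sep}, \ref{nonzero}, and \ref{equiv} together with Corollary~\ref{ordinary}.
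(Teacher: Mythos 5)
Your proposal is correct and follows essentially the same route as the paper: reduce to $k$ separably closed by Lemma~\ref{sep}, take a nontrivial extension by $\Gm$ via Lemma~\ref{nonzero}, prove pseudo-reductivity by the same dimension-one splitting argument, and settle the elliptic-curve statement via Corollary~\ref{ordinary} and Lemma~\ref{equiv}. Your explicit case split on whether $A$ is definable over $k^p$, and the observation that $U$ has exponent $p$, are exactly what the paper's terse citation of those two results leaves implicit.
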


Recall that Corollaries \ref{supersingular} and \ref{ordinary}
give a larger
class of pseudo-abelian varieties when
the abelian subvariety is a supersingular elliptic curve,
or an ordinary elliptic curve which cannot be defined over the subfield
$k^p$.

\begin{proof}
By Lemma \ref{sep}, we can assume that $k$ is separably closed.
By Lemma \ref{nonzero}, there is a nontrivial
extension
$$1\arrow \Gm\arrow E\arrow U\arrow 1$$
of commutative $k$-groups. If $N$ is a nontrivial
smooth connected unipotent $k$-subgroup of $E$,
then $N\cap \Gm=1$ as a group scheme, and so $N$
projects isomorphically to a subgroup of $U$. Since
$U$ has dimension 1, $N$ projects isomorphically to $U$,
contradicting that the extension is nontrivial. So $E$
must be pseudo-reductive.

An ordinary elliptic curve $A$ over $k$ has $\ker(F)$ isomorphic
to $\mu_p$, since $k$ is separably closed. The existence
of the pseudo-reductive extension $E$ implies that there
is a pseudo-abelian extension of $U$ by $A$,
by Corollary \ref{ordinary} and Lemma \ref{equiv}.
\end{proof}

\begin{example}
\label{dim1ex}
Let $k_0$ be a field of characteristic $p\geq 3$ and let
$k$ be the rational function field $k_0(t)$.
Let $U$ be the subgroup $\{(x,y):y^p=x-tx^p\}$ of $(\Ga)^2$ over $k$.
Then $U$ is a $k$-wound group of dimension 1
with $\Ext^1(U,\Gm)=0$. Therefore, $U$ has no extension by $\Gm$
over $k$ which is pseudo-reductive.
\end{example}

Conrad-Gabber-Prasad observed that $\Ext^1(U,\Gm)=0$ in this example 
when $p=3$ \cite[equation 11.3.1]{CGP}. Note that $U$
does have an pseudo-reductive extension by some torus over $k$,
by Corollary \ref{cordim1}.

\begin{proof}
Over $\overline{k}$, $U$ becomes isomorphic to $\Ga$ by a simple change
of variables. So $U$
is connected and smooth over $k$.
If $U$ were isomorphic to $\Ga$, then the projective
closure $X=\{ [x,y,z]\in \P^2: y^p=xz^{p-1}-tx^p\}$
of $U$ would have normalization isomorphic to $\P^1$
over $k$, and the image of $\infty\in \P^1$ would be a $k$-rational
point in $X-U$. But there is no such point, and so $U$ is $k$-wound.

By Kambayashi-Miyanishi-Takeuchi \cite[6.13.3]{KMT},
$$\Ext^1(U,\Gm)\cong \bigg\{ (c_0,\ldots,c_{p-2})\in k^{p-1}:
c_{p-2}=\sum_{0\leq j\leq p-2} c_j^pt^j \bigg\}.$$
We will show that this equation has no nonzero solutions
in $k=k_0(t)$. We can assume that $k_0$ is algebraically closed.

Suppose that $(c_0,\ldots,c_{p-2})$ is a nonzero element
of $\Ext^1(U,\Gm)$. If $c_{p-2}=0$, then the equation
gives that $1,t,\ldots,t^{p-3}$ are linearly dependent
over the field $k^p$, which is false. So $c_{p-2}\neq 0$.

Viewing $c_0,\ldots,c_{p-2}$ as rational functions over $k_0$,
we can differentiate the equation $p-2$ times to get
$c_{p-2}^{(p-2)}=(p-2)!\, c_{p-2}^p$. By considering the pole
order of $c_{p-2}$ at each point $a\in k_0$, we deduce from this
equation that $c_{p-2}$ is regular at each point $a\in k_0$.
Since $k_0$ is algebraically closed, that means that
$c_{p-2}$ is a polynomial over $k_0$. Let $d$ be its
degree. Then $c_{p-2}^p$ is nonzero of degree $pd$ while
$c_{p-2}^{(p-2)}$ has lower degree, a contradiction.
We have shown that $\Ext^1(U,\Gm)=0$ over $k=k_0(t)$.
\end{proof}

\begin{example}
\label{dim2}
Let $k_0$ be a field of characteristic $p>0$, and let
$k$ be the rational function field $k_0(a,b)$. Let $U$ be the subgroup
$$\{(x,y,z):x+ax^p+by^p+z^p=0\}$$
of $(\Ga)^3$. Then $U$ is a commutative $k$-wound group
of dimension 2 with $\Ext^1(U_{k_s},\Gm)=0$. It follows
that, even over the separable closure $k_s$,
$U$ is not the unipotent quotient of any commutative
pseudo-reductive group.
\end{example}

\begin{proof}
Let $X$ be the projective closure of $U$,
$$X=\{ [x,y,z,w]\in \P^3_k: xw^{p-1}+ax^p+by^p+z^p=0\}.$$
Then $X$ has no $k$-points at infinity (meaning points with $w=0$).
It follows that $U$ is $k$-wound.

Since $\Ext^1(U_{k_s},\Gm)$ is a subgroup of $\Pic(U_{k_s})$
(Lemma \ref{ext}), it suffices to show that $\Pic(U_{k_s})=0$.
We start by finding the non-regular locus of the surface $X$.
To do so, we compute the zero locus
of all derivatives of the equation with respect to $x,y,z,w$
and also $a,b$: this gives that $w^{p-1}=0$, $x^p=0$,
$y^p=0$, and hence $z^p=0$, which defines the empty set
in $\P^3_k$. So $X$ is regular, and it follows that $X_{k_s}$
is regular \cite[Prop.~X.6.5]{Bourbaki}. 
Also, $X-U$ is the plane curve $D=\{[x,y,z]
\in \P^2_k: ax^p+by^p+z^p=0\}$, which is regular over $k_s$
and hence irreducible over $k_s$. It follows that 
$$\Pic(U_{k_s})\cong \Pic(X_{k_s})/\Z \cdot [D_{k_s}]=\Pic(X_{k_s})/
\Z\cdot O(1).$$
So it suffices to show that $\Pic(X_{k_s})=\Z\cdot O(1)$.

\begin{lemma}
\label{inj}
Let $Y$ be a scheme of finite type over a field $F$ such that
$H^0(Y,O)=F$. Then the homomorphism $\Pic(Y)\arrow \Pic(Y_E)$ is injective
for any extension field $E$ of $F$.
\end{lemma}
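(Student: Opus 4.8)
The plan is to show directly that a line bundle $L$ on $Y$ whose pullback to $Y_E:=Y\times_{\Spec F}\Spec E$ is trivial must already be trivial. The engine is flat base change for cohomology together with faithfully flat descent of the property of being nowhere vanishing; the hypothesis $H^0(Y,O_Y)=F$ enters at exactly one place, to force nonzero global sections of trivial bundles not to vanish.

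First I would record the base-change input. Since $Y$ is of finite type over $F$ (hence quasi-compact and quasi-separated) and $E$ is free, in particular flat, over $F$, flat base change along $\Spec E\arrow \Spec F$ gives a natural isomorphism $H^i(Y_E,\mathcal{F}_E)\cong H^i(Y,\mathcal{F})\otimes_F E$ for every quasi-coherent sheaf $\mathcal{F}$ on $Y$ and every $i$. Applying this to $\mathcal{F}=O_Y$ and using the hypothesis $H^0(Y,O_Y)=F$ yields $H^0(Y_E,O_{Y_E})=E$. This is the only use of the hypothesis, and it is what guarantees that the constants are the only global functions on $Y_E$.

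Next, suppose $L\in\Pic(Y)$ maps to the trivial class, so $L_E\cong O_{Y_E}$. Then $H^0(Y_E,L_E)\cong H^0(Y_E,O_{Y_E})=E$, so by flat base change $H^0(Y,L)\otimes_F E\cong E$ and $H^0(Y,L)$ is one-dimensional over $F$; in particular there is a nonzero global section $s\in H^0(Y,L)$. Its image $s_E\in H^0(Y_E,L_E)$ is nonzero, because the natural map $H^0(Y,L)\inj H^0(Y,L)\otimes_F E=H^0(Y_E,L_E)$ is injective (tensoring with the free module $E$ over $F$). But under the identification $H^0(Y_E,L_E)\cong H^0(Y_E,O_{Y_E})=E$, every nonzero section is a nonzero constant, hence nowhere vanishing; thus $s_E$ is nowhere vanishing on $Y_E$.

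Finally I would descend. The vanishing scheme of $s$ is a closed subscheme of $Y$ whose preimage under $Y_E\arrow Y$ is the vanishing scheme of $s_E$, which is empty. Since $Y_E\arrow Y$ is faithfully flat, in particular surjective, the vanishing scheme of $s$ is empty as well. Therefore $s$ is a nowhere-vanishing global section of $L$, so $s\colon O_Y\arrow L$ is an isomorphism and $L$ is trivial, proving injectivity of $\Pic(Y)\arrow\Pic(Y_E)$. The only point requiring a moment's care is that flat base change is being invoked with no properness or flatness assumption on $Y/F$; this causes no trouble because the base change $\Spec E\arrow\Spec F$ is itself flat and $Y$ is quasi-compact and quasi-separated, so the isomorphism $H^0(Y_E,\mathcal{F}_E)\cong H^0(Y,\mathcal{F})\otimes_F E$ holds in the needed generality, and everything else is formal.
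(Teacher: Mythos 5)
Your proof is correct, and it rests on the same foundation as the paper's: both arguments begin with flat base change to get $H^0(Y_E,O_{Y_E})=E$ and to conclude that $H^0(Y,L)$ is one-dimensional over $F$, spanned by a section $s$ whose pullback $s_E$ corresponds to a nonzero constant under a trivialization of $L_E$. Where you diverge is in the concluding mechanism. The paper never mentions vanishing loci: it also takes a nonzero section $t\in H^0(Y,L^*)$ (one-dimensional by the same base-change argument applied to the dual), observes that the product $st\in H^0(Y,O_Y)=F$ is nonzero because it is nonzero after extension to $E$, and concludes that the compositions $O_Y\stackrel{s}{\longrightarrow}L\stackrel{t}{\longrightarrow}O_Y$ and $L\stackrel{t}{\longrightarrow}O_Y\stackrel{s}{\longrightarrow}L$ are isomorphisms (each being multiplication by the nonzero scalar $st$), so $L\cong O_Y$. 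You instead work with $L$ alone and descend non-vanishing: $s_E$ is nowhere vanishing, the zero locus of $s$ pulls back to the zero locus of $s_E$, and $Y_E\arrow Y$ is surjective (being faithfully flat, since surjectivity is stable under base change from $\Spec E\arrow\Spec F$), so $s$ is nowhere vanishing and is itself the trivializing isomorphism $O_Y\arrow L$. The trade-off: the paper's pairing trick stays entirely at the level of global sections and needs no geometric input about the projection $Y_E\arrow Y$, while your route needs that (harmless) surjectivity but avoids introducing the dual bundle and exhibits the given section directly as a trivialization. Both arguments are complete and essentially equally elementary.
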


\begin{proof}
We have $H^0(Y_E,O)=H^0(Y,O)\otimes_F E=E$.
Let $L$ be a line bundle on $Y$ which becomes trivial over $E$.
Then $L$ and the dual line bundle $L^*$ have 1-dimensional spaces
of sections over $Y$, since that is
true over $Y_E$. Let $s\in H^0(Y,L)$ and $t\in H^0(Y,L^*)$ be nonzero
sections. Then the product $st\in H^0(Y,O)=F$ is not zero since
that is true over $E$. This means that the compositions
$O_Y\stackrel{s}{\longrightarrow} L \stackrel{t}{\longrightarrow} O_Y$
and $L\stackrel{t}{\longrightarrow} O_Y \stackrel{s}{\longrightarrow} L$
are isomorphisms. So $L$ is trivial.
\end{proof}

A referee pointed out that one can prove Lemma \ref{inj} under the weaker
assumption that the ring $O(Y)$ has trivial Picard group. Consider
the morphism $f\colon Y\arrow S:=\Spec \, O(Y)$. Then the Leray spectral
sequence for fppf cohomology gives (since $f_*\Gm=\Gm$)
that $\Pic(Y)/\Pic(S)$ injects into $H^0_{\fppf}(S,R^1f_*\Gm)$,
which gives the result.

Since $X$ is a surface in $\P^3$, we have $H^0(X_{k_s},O)=k_s$
by the exact sequence of sheaves $0\arrow O_{\P^3}(-X)\arrow O_{\P^3}
\arrow O_X\arrow 0$. By Lemma \ref{inj}, 
we have $\Pic(X_{k_s})
=\Z\cdot O(1)$ as we want if we can show that $\Pic(X_{\overline{k}})=\Z\cdot
O(1)$. We have 
\begin{align*}
X_{\overline{k}}&\cong \{[x,y,z,w]\in \P^3: xw^{p-1}+x^p+y^p+z^p=0\}\\
&\cong \{[x,y,z,w]\in \P^3: xw^{p-1}+y^p=0\}
\end{align*}
Thus $X_{\overline{k}}$ is the projective cone over the plane curve
$xw^{p-1}+y^p=0$. 

Let $Y$ be a projective scheme over a field $k$
such that $H^0(Y,O_Y)=k$, and let $O_Y(1)$ be an ample line
bundle on $Y$. Let $R$ be the homogeneous coordinate
ring $\oplus_{j\geq 0} H^0(Y,O_Y(j))$ as a graded ring, and
define the {\it projective cone }over $Y$ to be
$X=\Proj R[x]$, where $x$ has degree 1.
For a closed subscheme $Y\subset \P^n$ over $k$, there is a finite
morphism from $X$ to the {\it classical projective cone }over $Y$
in $\P^{n+1}$, which is an isomorphism away from the vertex
\cite[section 2.56]{Kollarsing}. This morphism is an isomorphism
if the $k$-algebra $\oplus_{j\geq 0}H^0(Y,O_Y(j))$ is generated
by $H^0(\P^n,O(1))$, but in general the projective cone
as defined here has better properties.

\begin{lemma}
\label{cone}
Let $Y$ be a projective scheme over a field $k$ such that
$H^0(Y,O_Y)=k$, and let $O_Y(1)$ be an ample
line bundle on $Y$. Let $X$ be
the projective cone over $Y$.
Then $\Pic(X)=\Z\cdot O_X(1)$.
\end{lemma}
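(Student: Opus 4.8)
The plan is to resolve the vertex of the cone and reduce everything to the projective bundle formula over $Y$. Write $L=O_Y(1)$, and let $v\in X$ be the vertex, i.e. the point $R_+$ of the affine chart $\Spec R=D_+(x)\subset X$ (a single point because $H^0(Y,O_Y)=k$ forces $R_0=k$). First I would recall the standard description of the cone: blowing up $v$ produces the $\P^1$-bundle $\tilde X:=\P(O_Y\oplus L)$ with projection $\pi\colon \tilde X\arrow Y$ and a morphism $f\colon \tilde X\arrow X$ which contracts the ``zero'' section $E$ (mapped isomorphically to $Y$ by $\pi$) down to $v$ and is an isomorphism elsewhere. The other section $Y_\infty\cong Y$ is disjoint from $E$ and is the strict transform of the divisor $Y=V_+(x)\subset X$; since $Y$ avoids $v$, no exceptional correction appears and $f^*O_X(1)=f^*O_X(Y)=O_{\tilde X}(Y_\infty)$.

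The crucial point, and the reason for defining the cone by the full section ring $R$, is that $f_*O_{\tilde X}=O_X$. Indeed $f^{-1}(\Spec R)$ is the total space of a line bundle over $Y$ whose ring of global regular functions is $\bigoplus_{n\geq 0}H^0(Y,L^n)=R=O(\Spec R)$; combined with the fact that $f$ is an isomorphism away from $v$, this yields $f_*O_{\tilde X}=O_X$. From this identity and the projection formula $f_*(f^*M)=M\otimes f_*O_{\tilde X}=M$ I get that $f^*\colon \Pic(X)\arrow \Pic(\tilde X)$ is injective. Moreover, by the standard descent lemma for line bundles along a proper morphism with $f_*O=O$ and trivial restriction to the contracted fibre, the image of $f^*$ is exactly the subgroup $\{M\in \Pic(\tilde X):M|_E\cong O_E\}$. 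So $\Pic(X)$ is identified with this subgroup.

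It then remains to compute that subgroup. By the projective bundle formula, $\Pic(\tilde X)=\pi^*\Pic(Y)\oplus \Z\cdot \xi$, where $\xi=O_{\tilde X}(1)$. Since $\pi|_E$ is an isomorphism, restriction to $E$ sends $\pi^*N$ to $N$, so the map $\Pic(\tilde X)\arrow \Pic(E)=\Pic(Y)$ is surjective and its kernel is infinite cyclic, generated by any class of fibrewise degree $1$ lying in the kernel. Both $\xi\otimes \pi^*((\xi|_E)^{-1})$ and $f^*O_X(1)=O_{\tilde X}(Y_\infty)$ lie in this kernel (the latter because $O_X(1)$ is trivial near $v$) and each has degree $1$ on the fibres of $\pi$ (the section $Y_\infty$ meets every fibre once), so they coincide. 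Hence the kernel is $\Z\cdot f^*O_X(1)$, and pulling back gives $\Pic(X)=\Z\cdot O_X(1)$.

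The hard part will be the descent step: justifying $f_*O_{\tilde X}=O_X$ and the resulting descent of line bundles \emph{without} assuming $X$ (equivalently $Y$) is normal, since in the intended application $Y$ is a singular plane curve. The full-section-ring definition of the cone is precisely what makes $f_*O_{\tilde X}=O_X$ hold, and this identity is exactly the hypothesis needed for the descent lemma, so the non-normality of $X$ causes no difficulty once this is in place. Everything else---the projective bundle formula and the fibre-degree bookkeeping---is routine and insensitive to singularities of $Y$.
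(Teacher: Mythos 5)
Your proposal is correct and follows essentially the same route as the paper's proof: the paper likewise contracts the $\P^1$-bundle $Z=P(O_Y\oplus O_Y(1))$ onto $X$, invokes the projective bundle formula $\Pic(Z)=\Pic(Y)\oplus \Z\cdot O_Z(1)$, identifies $f^*O_X(1)\cong O_Z(1)$, and concludes from the injectivity of $f^*\colon \Pic(X)\arrow\Pic(Z)$ (resting, as in your argument, on $f_*O_Z=O_X$, which you verify by computing functions on the affine cone rather than citing EGA) together with the fact that pullbacks restrict trivially to the contracted section, so that the image of $f^*$ lands in $\Z\cdot O_Z(1)$. One remark: your appeal to a ``standard descent lemma'' to upgrade this containment to an \emph{equality} (image of $f^*$ equals the kernel of restriction to $E$) is both the shakiest step --- for a non-flat contraction such descent genuinely requires formal-function or base-change arguments, not just $f_*O_{\tilde X}=O_X$ --- and entirely unnecessary, since the trivial containment already pins down $\Pic(X)=\Z\cdot O_X(1)$ once $f^*$ is injective, which is exactly how the paper argues.
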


\begin{proof}
Let $Z$ be the $\P^1$-bundle $P(O_Y\oplus O_Y(1))$ over $Y$.
By the calculation of the $K$-theory of projective bundles
\cite[Theorem VI.1.1]{SGA6},
$\Pic(Z)\cong \Pic(Y)\oplus \Z$ for any connected
scheme $Y$.
(Here the summand $\Z$ is generated by the natural line bundle
$O_Z(1)$ on the projective bundle $Z$.
The statement means that every line bundle on $Z$ is, in a unique
way, a pullback from $Y$
tensored with $O_Z(j)$ for some integer $j$.)
Since $H^0(Y,O_Y)=k$, $Y$ is connected and so 
$\Pic(Z)=\Pic(Y)\oplus \Z$.
Since $Y$ is projective over $k$ with $H^0(Y,O_Y)=k$,
there is a surjection $f\colon Z\arrow X$ which contracts a copy
of $Y$ (the section corresponding to the first projection
$O_Y\oplus O_Y(1)\surj O_Y$ over $Y$)
to a point \cite[Proposition 8.6.2]{EGAII}.
For any line bundle $L$ on $X$, the pullback
$f^*L$ is trivial on $Y$, and so the image
of $f^*\colon \Pic(X)\arrow \Pic(Z)$
is contained in $\Z\cdot O_Z(1)$. (By restricting to a fiber
of the $\P^1$-bundle $Z\arrow Y$,
we see that $f^*O_X(1)\cong O_Z(1)$.)
It remains to show
that $f^*\colon \Pic(X)\arrow \Pic(Z)$ is injective.

The natural map $O_X\arrow f_*O_Z$ is an isomorphism
\cite[Proposition 8.8.6]{EGAII}.
So, for any line bundle
$L$ on $X$, the natural map $L\arrow f_*f^*(L)$ is an isomorphism.
If $L$ is a line bundle on $X$ whose pullback to $Z$ is trivial,
then $H^0(X,L)= H^0(X,f_*f^*L)\cong H^0(Z,f^*(L))\cong H^0(Z,O_Z)=k$.
Likewise, $H^0(X,L^*)\cong k$. It follows that $L$ is trivial,
as in the proof of Lemma \ref{inj}.
\end{proof}

We now return to Example \ref{dim2}. The surface $X_{\overline{k}}$
is the classical projective cone over the plane curve
$Y=\{ xw^{p-1}+y^p=0\}$ over $\overline{k}$. For a curve
$Y$ of any degree $d$ in $\P^2$,
the $k$-algebra
$\oplus_{j\geq 0}H^0(Y,O_Y(j))$ is generated in degree 1,
by considering the exact sequence of sheaves on $\P^2$,
$0\arrow O_{\P^2}(j-d)\arrow O_{\P^2}(j)\arrow O_Y(j)\arrow 0$.
So $X_{\overline{k}}$ is the projective cone over $Y$ in the sense
defined above.
By Lemma \ref{cone}, $\Pic(X_{\overline{k}})=\Z\cdot O_X(1)$.
As we have said, it follows that $\Pic(U_{k_s})=0$. Example
\ref{dim2} is proved.
\end{proof}

The following example, supplied by a referee, answers a question
in the original version of this paper. Note that $k$ can be
separably closed in the following example.

\begin{example}
\label{UU}
Let $k$ be a field of characteristic $p>0$
with $[k\colon k^p]=p$. Let $k_1=k^{1/p}$, which is
an extension of degree $p$ of $k$.
Let $U$ be the smooth connected
commutative unipotent $k$-group $(R_{k_1/k}\Gm)/\Gm$ of dimension $p-1$.
Then $U\times U$ is $k$-wound, but
$U\times U$ has no extension by $\Gm$ over $k$ which is pseudo-reductive.
It does have an extension by $(\Gm)^2$ which is pseudo-reductive.
\end{example}

\begin{proof}
We first consider a more general situation. Let $k$ be any field
of characteristic $p>0$.
For any smooth
connected commutative affine $k$-group $G$ with maximal
torus $T$, let $K$ be the field of definition over $k$ of the geometric
unipotent radical of $G$. Thus $G_K$ is the product of $T_K$
with a smooth connected unipotent $K$-group; in particular,
we have a unique splitting $G_K\arrow T_K$ of the inclusion.
By the universal property of Weil restriction, this gives
a homomorphism $f\colon G\arrow R_{K/k}(T_K)$ which restricts to the obvious
inclusion $T\arrow R_{K/k}(T_K)$. Moreover, $f$
does not factorize through $R_{L/k}(T_L)$ for any 
proper subextension $L$ of $K/k$. Let $k_1$ denote the extension
field $k^{1/p}$.
Suppose that $p$ kills $G/T$; then the image in $T(K)$
of a point in $G(k)$ has $p$th power
in $T(k)$, and so that image lies in $T(k_1)$. It follows that 
$f$ factors through $R_{L/k}(T_L)$ for $L = K\cap k_1$,
and so $K$ is contained in $k_1$.

We now return to the notation of this Example, so that
$k$ is a field with $[k\colon k^p]=p$. Then $k_1$ is equal to $k(t^{1/p})$
for any element $t\in k^*$ which is not a $p$th power.
We know that $U$
is $k$-wound, because $R_{k_1/k}\Gm$ is pseudo-reductive.
(Use the universal property of Weil restriction: a homomorphism
$\Ga\arrow R_{k_1/k}\Gm$ over $k$
is equivalent to a homomorphism $\Ga\arrow \Gm$ over $k_1$,
which must be trivial.) The product
$(R_{k_1/k}\Gm)^2$ is an extension of $U\times U$
by $(\Gm)^2$ which is pseudo-reductive.

By Oesterl\'e, as we used in the proof of Lemma
\ref{nontrivial}, $U$
is isomorphic to
$$\{(x_0,\ldots,x_{p-1})\in (\Ga)^p: x_0^p+tx_1^p+\cdots
+t^{p-1}x_{p-1}^p=x_{p-1}\}$$
\cite[Proposition VI.5.3]{Oesterle}. The Lie algebra of $U$ is
a restricted Lie algebra with $p$th power operation equal to zero
(since that is true for $(\Ga)^p$, for example).
So every nonzero element of the Lie algebra of $U$ gives an
$\alpha_p$ subgroup of $U$. The intersections of $U$ with
one-dimensional $k$-linear subspaces of $(\Ga)^p$ give exactly
the $k$-subgroup schemes of order $p$ in $U$. Therefore
the quotient of $U$ by any $k$-subgroup scheme of order $p$
(in particular, any $\alpha_p$ subgroup) is isomorphic
to $(\Ga)^{p-1}$ over $k$. So any
homomorphism $U\arrow U$ over $k$ is either zero or induces
an isomorphism on Lie algebras, using that $U$ is $k$-wound.

Now let $G$ be a commutative extension of $U\times U$ by $\Gm$
over $k$. Since $U\times U$ is killed by $p$, we showed above
that the geometric unipotent radical of $G$ is defined
over $k_1$. As above, this gives a homomorphism
$G\arrow R_{k_1/k}\Gm$ which is the identity on the subgroup
$\Gm$. On the quotients by $\Gm$, this gives
a homomorphism $h\colon U\times U\arrow U$, and the extension $G$
of $U\times U$ by $\Gm$ is pulled back via $h$. By the previous paragraph,
either $h$ is zero or $h$ induces a surjection on Lie algebras.
In both cases, $\ker(h)$ is a smooth $k$-subgroup of positive
dimension. Since the extension $G$ of $U\times U$ by $\Gm$ splits
over $\ker(h)$, $G$ is not pseudo-reductive.
\end{proof}

The following question is suggested by Corollary \ref{cordim1},
Example \ref{dim2}, and Example \ref{UU}.

\begin{question}
\label{dimone}
If $k$ is a field
with $[k\colon k^p]=p$, is every $k$-wound commutative unipotent group
the unipotent quotient of some commutative pseudo-reductive group
over $k$?
\end{question}

In view of Example \ref{UU}, the maximal torus
of the pseudo-reductive group will in general have dimension greater than 1.
By Lemma \ref{sep}, it suffices to answer Question \ref{dimone}
for $k$ separably closed.

We know that the unipotent quotient of a commutative pseudo-reductive
group is $k$-wound. So Question \ref{dimone} would describe exactly
which groups occur as the unipotent quotients of commutative
pseudo-reductive groups over a field $k$
with $[k\colon k^p]=p$. (For example, that would apply to
the function field of a curve over a finite field.)


\small \sc DPMMS, Wilberforce Road,
Cambridge CB3 0WB, England

b.totaro@dpmms.cam.ac.uk
\end{document}